\newcommand{\diagramh}[1]{{#1}}
\newtheorem{lemma} {Lemma} [section]
\newtheorem{proposition} [lemma] {Proposition}
\newtheorem{theorem} [lemma] {Theorem}
\newtheorem{definition}[lemma] {Definition}
\theoremstyle{definition}
\newtheorem{remark}[lemma]{Remark}
\newtheorem{theoremz} [] {Theorem}
\newtheorem{example}[lemma] {Example}
\renewenvironment{proof}[1][\unskip]{{\sc Proof #1:\,}}{{\hspace*{\fill} $\square$\\}}
\numberwithin{}{}
\def\d{{\partial}}                  %
\def\dC{{\partial}}                 
\def\g{{\mathfrak{g}}}              
\def\Mon{\mathcal{N}}               
\def\MON{\widetilde{\mathcal{N}}}   
\def\K{\mathcal{K}}                 
\DeclareMathOperator\Der{Der}       
\DeclareMathOperator\im{Im}         
\DeclareMathOperator\Hom{Hom}       
\def\G{\mathcal{G}}                 
\def\s{\mathbf{s}}                    
\def\t{\mathbf{t}}                    
\def\quotient#1#2{%
    \raise0.5ex\hbox{$#1$}\big/\lower0.5ex\hbox{$#2$}%
}
\newcommand{\vba}{$\mathcal{VB}$-algebroid\xspace }                     
\newcommand{\vbas}{$\mathcal{VB}$-algebroids\xspace}                    
\newcommand{\vbg}{$\mathcal{VB}$-groupoid\xspace }                      
\newcommand{\vbgs}{$\mathcal{VB}$-groupoids\xspace }                     
\newcommand{\twoheadlongrightarrow}{\relbar\joinrel\twoheadrightarrow}  
\newcommand{\tto}{\rightrightarrows}                                    
\newcommand{\set}[1]{\left\{#1\right\}}					
\newcommand\HH{\mathcal{H}}
\newcommand\D{\mathcal{D}}
\newcommand\R{\mathbb{R}}
\newcommand\Lie{\mathcal{L}}
\newcommand{\E}{\mathcal{E}}
\newcommand{\Pa}{\mathcal{P}}                             
\newcommand{\bijar}[1][]{%
 \ar[#1]
 \ar@<0.7ex>@{}[#1]|-*=0[@]{\sim}}
\def\Vec{\bf{Vect}}
\def\Term{\bf{Term}}
\def\tTerm{\mathbf{2}\text{\bf{-}}\Term}
\def\tVec{\mathbf{2}\text{\bf{-}}\Vec}
\DeclareMathOperator{\End}{End}
\DeclareMathOperator{\id}{id}
\DeclareMathOperator{\Hor}{Hor}
\DeclareMathOperator{\ad}{ad}
\DeclareMathOperator{\Gau}{Gau}
\DeclareMathOperator{\Iso}{Iso}
\DeclareMathOperator{\hor}{hor}
  \newcommand{\AC}{A^{\scriptscriptstyle{C}}} 
\renewcommand{\AE}{A^{\scriptscriptstyle{E}}} %
  \newcommand{\BC}{B^{\scriptscriptstyle{C}}} 
  \newcommand{\BE}{B^{\scriptscriptstyle{E}}} %
\DeclareMathOperator\hol{{hol}}       
\newcommand{\holE}{\hol^{\scriptscriptstyle{E}}}     
\newcommand{\holC}{\hol^{\scriptscriptstyle{C}}}     
\newcommand{\holK}{\hol^{\scriptscriptstyle{\K}}}     
\newcommand{\omegaE}{{\omega_{\scriptscriptstyle{E}}}} 
\newcommand{\omegaC}{{\omega_{\scriptscriptstyle{C}}}} 
\newcommand{\nablaE}{\nabla^{\scriptscriptstyle{E}}} 
\newcommand{\nablaC}{\nabla^{\scriptscriptstyle{C}}} 
\def\omk{{\omega}_{\scriptscriptstyle{\K}}}          
\newcommand{\DeltaE}{\Delta^{\scriptscriptstyle{E}}} 
\newcommand{\DeltaC}{\Delta^{\scriptscriptstyle{C}}} 
\newcommand\sig{\sigma}
\newcommand{\pE}{p_{\scriptscriptstyle{E}}} 
\newcommand{\pC}{p_{\scriptscriptstyle{C}}} 
\newcommand{\tGau}{2\text{-}\mathrm{Gau}}
\newcommand{\tPa}{2\text{-}\Pa}
\newcommand{\aK}{v}
\newcommand{\sV}{\s_{\scriptscriptstyle{\mathrm{\bf V}}}}
\newcommand{\tV}{\t_{\scriptscriptstyle{\mathrm{\bf V}}}}
\newcommand{\sH}{\s_{\scriptscriptstyle{\mathrm{\bf H}}}}
\newcommand{\tH}{\t_{\scriptscriptstyle{\mathrm{\bf H}}}}
\newcommand{\ts}{\tilde{\s}}
\newcommand{\tit}{\tilde{\t}}
\newcommand{\tsV}{\tilde{\s}_{\scriptscriptstyle{\mathrm{\bf V}}}}
\newcommand{\ttV}{\tilde{\t}_{\scriptscriptstyle{\mathrm{\bf V}}}}
\newcommand{\tsH}{\tilde{\s}_{\scriptscriptstyle{\mathrm{\bf H}}}}
\newcommand{\ttH}{\tilde{\t}_{\scriptscriptstyle{\mathrm{\bf H}}}}
\newcommand{\circH}{\underset{{}^{\mathrm{\bf H}}}{\bullet}}
\newcommand{\circV}{\underset{{}^{\mathrm{\bf V}}}{\bullet}}
\begin{document}

\author{O. Brahic\thanks{{\tt brahicolivier@gmail.com}, {Departamento de Matem\'atica, Universidade Federal do Paran\'a, Setor de Ci\^encias Exatas - Centro Polit\^ecnico 81531-990 Curitiba - Brasil.. This author was supported by CNPq 
grant 401253/2012-0.}}, C. Ortiz\thanks{\texttt{cortiz@ime.usp.br}, {Instituto de Matem\'atica e Estat\'istica, Universidade de S\~ao Paulo, Rua do Mat\~ao 1010, Cidade Universit\'aria, 05508-090, S\~ao Paulo - Brasil.}}}

\date{\today}
\title{Integration of $2$-term representations up to homotopy via $2$-functors}
\maketitle
\begin{abstract}

 Given a representation up to homotopy of a Lie algebroid on a $2$-term complex of vector bundles, we define the corresponding holonomy as a strict $2$-functor from a Weinstein path $2$-groupoid to the gauge $2$-groupoid of the underlying $2$-term complex. We construct a corresponding transformation $2$-groupoid and we prove that the $1$-truncation of this $2$-groupoid is isomorphic to the Weinstein groupoid of the \vba associated to a representation up to homotopy. As applications, we describe alternative integration schemes for semi-direct products of Lie 2-algebras and string algebras.

\end{abstract}

\tableofcontents

\section{Introduction}

Given a Lie algebroid $A\to M$, a representation of $A$ is a vector bundle $E\to M$ together with a Lie algebroid morphism $A\to D(E)$, where $D(E)$ is the Lie algebroid of derivations of $E$. This is equivalent to an $A$-connection $\nabla:\Gamma(A)\times \Gamma(E)\to \Gamma(E); (a,e)\mapsto \nabla_ae$ on the vector bundle $E$, with vanishing curvature, i.e. $\nabla_{[a,b]}-[\nabla_a,\nabla_b]=0$ for every $a,b\in\Gamma(A)$. There are two objects that one can associate to a representation of a Lie algebroid, namely:

\begin{itemize}
\item the cohomology $H^{\bullet}(A,E)$ of $A$ with coefficients in $(E,\nabla)$,
\item the \textbf{transformation Lie algebroid} $ A\ltimes E$, which is a Lie algebroid over $E$.
\end{itemize}

The global counterpart of a representation of a Lie algebroid corresponds to the notion of representation of a Lie groupoid. Given a Lie groupoid $\G$ over $M$, a representation of $\G$ consists on a vector bundle $E\to M$ together with a Lie groupoid morphism $\G\to \Gau(E)$, where $\Gau(E)$ is the gauge groupoid of $E$, i.e. objects are points of $M$ and arrows from $x$ to $y$ are linear isomorphisms $E_x\to E_y$ between the fibers. In this setting, the constructions describe above are well defined as well, so to any Lie groupoid representation one associates two objects:

\begin{itemize}
\item the groupoid cohomology $H^\bullet(\G,E)$ with coefficients in $E$, 
\item the \textbf{transformation groupoid} $ \G\ltimes E$, which is  a Lie groupoid over $E$.
\end{itemize}

Although these constructions contain essentially the same information, one may think of the construction of a  cohomology with coefficients as an algebraic counterpart to the notion of representation, while the transformation algebroid/groupoid is purely geometric.

In this context, one can discuss the integration problem in the following way. Regarding the transformation object, one can pass from Lie groupoids to Lie algebroids by differentiation and, provided $A$ is integrable, one can pass from Lie algebroids to Lie groupoids as well, by an integration process. Moreover, both operations of integration and that of taking the corresponding transformation object commute, which can be summarized in  a simple diagram, as follows:
$$
\xymatrix@C=70pt@R=30pt{
\txt<10pc>{Lie algebroid  \\representation:\\ 
                           $A\circlearrowright  E$\\ \ } \ar[r]^{\txt{integration}}  
                                                    \ar[d]_{\txt{\\transformation\\object\\ }}
                                                                                         & \txt<10pc>{ Lie groupoid \\representation:\\ 
                                                                                                       $\G(A)\circlearrowright  E$ \\ \ }  \ar[d]^{\txt{\\transformation\\object\\}}\\
       \txt<10pc>{ \\ transformation\\ Lie algebroid:\\
                       $A\ltimes  E$ } \ar[r]^{\txt{integration}}  
                                                                                         & \txt<10pc>{\\ transformation\\ Lie groupoid: \\
                                                                                                       $\G(A)\ltimes E=\G(A\ltimes E)$}}$$
                                                                                                      
Note that given a representation of a Lie algebroid/groupoid over $M$, one can also build the corresponding \textbf{semi-direct product} Lie algebroid (groupoid respectively), which is an algebroid (groupoid respectively) over $M$. In this case, the semi-direct product operation commutes with both the Lie functor and integration. In this paper we are concerned only with the transformation object associated to representations.


Unlike Lie algebras, the adjoint representation of a Lie algebroid is not well defined. As shown in \cite{AriasCrainic}, the adjoint representation of a Lie algebroid is well defined as a representation up to homotopy. This is a generalization of the notion of representation of a Lie algebroid, where a Lie algebroid is represented on a graded vector bundle $\E=\oplus E_i$ rather than on a single vector bundle. A representation up to homotopy of a Lie algebroid $A\to M$ on a graded vector bundle $\E$ is defined as a degree one operator on the complex $\Omega^\bullet(A)\otimes \Gamma(\E)$, which squares to zero and satisfies a natural derivation condition. See \cite{AriasCrainic}) for a precise definition.

The notion of representation up to homotopy of a Lie algebroid extends naturally, from the cohomological viewpoint, the usual concept of Lie algebroid representation. In the special case of $2$-term representations up to homotopy of a Lie algebroid $A$, i.e. the graded vector bundle $\E$ is concentrated in degrees -1 and 0, there is a nice geometric interpretation in terms of \vbas, due to Gracia-Saz and Mehta \cite{GM08}. The concept of \vba was introduced by Pradines in \cite{Pradines} and it roughly corresponds to a vector bundle object in the category of Lie algebroids.

In this work, we will only be interested in the case of a representation up to homotopy on a graded vector bundle $\E$ concentrated in degree $-1$ and $0$, $\E=E_{-1}\oplus E_{0}$. Since we want to break free from the cohomological approach, and in order to relate with the work of Gracia-Saz and Mehta \cite{GM08},  we shall forget about degrees, and denote by $E:=E_{-1}$, and by  $C:=E_{0}$ the degree $-1$ and $0$ components, respectively. It is shown in \cite{GM08} that a representation up to homotopy of $A$ on $\E=E\oplus C$ induces a \vba structure on the double vector bundle $D=A\oplus E\oplus C$. Conversely, given a \vba $D$ over $A$, with core $C$ and side $E$, the choice of a splitting $D\cong A\oplus E\oplus C$, induces a $2$-term representation up to homotopy of $A$ on $E\oplus C$. In fact, there is an equivalence of categories between $2$-term representations up to homotopy of a Lie algebroid $A$ and \vbas over $A$ (see \cite{GM08, DJO}). For this reason, we will denote by $A\ltimes \E$ the \vba structure associated to a $2$-term representation up to homotopy of $A$, and think of the operation of passing from a $2$-term representation to homotopy of $A$ to a \vba (with a splitting) as that of building a transformation Lie algebroid.

 This description makes sense for groupoids as well. A representation up to homotopy of a Lie groupoid $\G$ can be defined similarly, as a degree $1$ operator on a complex of the form $C^\bullet(\G,\E)$, which squares to zero. Then in the category of Lie groupoids, the obvious object to play the role of a transformation object is the corresponding \vbg structure, defined on $\t^*C\oplus_\G \s^*E$, which we denote by $\G\ltimes \E$. What plays the role of a splitting for a $2$-term representation up to homotopy of a Lie groupoid is called \emph{right-horizontal lift} in \cite[Sec. 3]{GM10}.

Besides their role as transformation objects, \vbas and \vbgs bring a deep understanding on the representation theory of Lie algebroids and groupoids. In particular, while the adjoint and coadjoint representations up to homotopy of a Lie algebroid (groupoid respectively) depends on the choice of a connection, the \vbas (\vbgs respectively) corresponding to these representations up to homotopy are the tangent and cotangent \vbas (\vbgs respectively), which are entirely canonical. This relation between \vbas and representations up to homotopy has been used to study the infinitesimal picture of Lie groupoids equipped with multiplicative structures, e.g. multiplicative foliations \cite{DJO}.

Back to our discussion, one may expect to be able to produce an integration process, so as to pass from a $2$-term representation up to homotopy of a Lie algebroid to a $2$-term representation up to homotopy of a Lie groupoid, in such a way that this operation commutes with that of taking the corresponding transformation objects, as follows:		
\[
\xymatrix@C=70pt@R=30pt{
\txt<10pc>{Lie algebroid  \\$2$-term representation up to homotopy:\\ 
                           $A\circlearrowright  \E$\\ \ } \ar@{-->}[r]^{\txt{integration ?}}  
                                                    \ar@<-2ex>[d]_{\txt{\\transformation\\object\\ }}
                                                                                         & \txt<10pc>{ Lie groupoid \\$2$-term representation up to homotopy:\\ 
                                                                                                       $\G(A)\circlearrowright  \E$ \\ \ }  \ar@<-2ex>[d]_{\txt{\\transformation\\object\\}}\\
       \txt<10pc>{ \\ \vba :\\
                       $A\ltimes  \E=A\oplus E\oplus C$ } \ar[r]^{\txt{integration}} 
                                                     \ar@<-2ex>[u]_{\txt{splitting}}  
                                                                                         & \txt<10pc>{\\ \vbg: \\
                                                                                                       $\G(A\ltimes \E)\overset{?}{=}\G(A)\ltimes \E$}\ar@<-2ex>[u]_{\txt{horizontal\\lift}}}   \]

Unfortunately, unlike for usual representations, there are a few problems occuring:
\begin{enumerate}
\item even if $A$ is integrable, the \vba $A\ltimes \E$ may not be integrable,
\item even if $A\ltimes \E$ is integrable, a $2$-term representation up to homotopy of $A$ on $\E$ does not determine a $2$-term representation up to homotopy of $\G(A)$ on $\E$.
\end{enumerate} 

The first question is completely addressed in \cite{BCO}, showing explicit obstructions to the integrability of \vbas and hence providing integrability criteria for the integration of $2$-term representations up to homotopy of an integrable Lie algebroid.

The second item deserves more attention. Note first that, as shown in \cite{AriasSchatz}, any $2$-term representation up to homotopy of $\G(A)$ differentiates to a $2$-term representation up to homotopy of $A$. In this case, one can see that $\G(A)\ltimes \E$ integrates $A\ltimes \E$, both as a groupoid and as a \vbg \cite{BCO}. So there is no serious issue when performing the differentiation process. It is the integration of the splitting which is problematic. Indeed, given a \vbg over $\G(A)$, the corresponding $2$-term representation up to homotopy of $\G(A)$ is obtained by choosing a right-horizontal lift. Hence, given a $2$-term representation up to homotopy of $A$, once we have integrated $A\ltimes \E$ to $\G(A\ltimes \E)$, one still has to choose a right-horizontal lift of $\G(A\ltimes \E)$ (see \cite[Sec. 3]{GM10}) in order to obtain a $2$-term representation up to homotopy of $\G(A)$.

 We claim that such a right-horizontal lift is not determined by the infinitesimal data. Namely, the splitting of the \vba $A\ltimes \E=A\oplus E\oplus C$ does not determine a right-horizontal lift of the corresponding \vbg $\G(A\ltimes \E)$. It follows that one can not obtain a $2$-term representation up to homotopy of $\G(A)$ out of a $2$-term representation up to homotopy of $A$ without involving further choices. From this point of view, $2$-term representations up to homotopy of a Lie algebroid $A$ do not integrate to $2$-term representations up to homotopy of $\G(A)$, at least not in a canonical way.
 
 In \cite{CA01}, Arias Abad and Sch\"atz proposed an integration scheme for Lie algebroid representations up to homotopy of any degree, the output of which, when applied to the $2$-term case, is indeed not a $2$-term representation up to homotopy of a Lie groupoid, but rather an $A_\infty$-morphism of DG-categories. In their work, the notion of transformation object is not discussed and it is not clear how one can recover a representation up to homotopy of a Lie groupoid, or a \vbg, by this procedure.
 
 The aim of this paper is to provide an alternative integration-like scheme for a $2$-term representation up to homotopy of a Lie algebroid, which being more intuitive, makes it straightforward what the corresponding transformation object should be. We also derive from this construction the \vbg integrating a \vba in a natural way.

The integration of usual Lie algebroid representations via functors works as follows. Let $\G(A)$ denote the Weinstein groupoid of a Lie algebroid $A\to M$. A representation $\rho:A \longrightarrow  \mathrm{D}(E)$ of $A$ on a vector bundle $E\to M$ integrates to a morphism of groupoids
$$\hol:\G(A)\longrightarrow  \Gau(E),$$
which is just the holonomy of the flat $A$-connection $\nabla$ defined by $\rho$. In this way, we obtain a representation of $\G(A)$ on $E$, which can be used to describe an integration of the transformation Lie algebroid $A\ltimes E$.

In this paper we explain how a $2$-term representation up to homotopy of a Lie algebroid can be integrated via strict $2$-functors between $2$-groupoids. We use such a $2$-functor to construct the corresponding transformation $2$-groupoid and out of this object we derive a natural description of the Weinstein groupoid of the \vba associated to a $2$-term representation up to homotopy of a Lie algebroid. More applications are further discussed as well.

The organization of this paper is the following. In section 2 we briefly recall the notion of representation up to homotopy and its relation with both \vbas and \vbgs. Section 3 is devoted to the study of \vbas in connection with Lie algebroid extensions and fibrations. We show in Proposition \ref{prop:vbafibration} that any \vba $D$ gives rise to a Lie algebroid fibration. As a consequence, we prove Theorem \ref{prop:longexactfibrationvbacase} which describes a homotopy long exact sequence of a \vba and states that the monodromy groups of $D$ fit into an exact sequence:
 $$\SelectTips{cm}{}\xymatrix@C=15pt{\im (\delta_2,e)\  \ar@{^{(}->}@<-0.5pt>[r]& \MON(D,e)   \ar@{->>}@<-0.5pt>[r]^-{p} & \MON(A,m),}$$ 
 where $\delta_2$ is a transgression map associated with the underlying Lie algebroid fibration $D\to A$.

In section 4 we are concerned with $2$-groupoids and their representations. We introduce two natural examples of $2$-groupoids, the so called \textbf{Weinstein $2$-groupoid} $\tPa(A)$ of a Lie algebroid $A$ and the \textbf{gauge $2$-groupoid} $\tGau(\E)$ of a $2$-term complex of vector bundles $\E=(C\to E)$ over a fixed manifold. More precisely:
\begin{itemize}
\item  in $\tPa(A)$, objects are points in $M$, $1$-morphisms are thin homotopy classes of $A$-paths, and $2$-morphisms are homotopy classes of $A$-homotopies between $A$-paths.
\item  objects in $2$-$\Gau(\E)$ are points in $M$, $1$-morphisms $x\to y$ are given by \emph{invertible} chain maps $(C_x,E_x)\to (C_y,E_y)$, and $2$-morphisms are chain homotopies.
\end{itemize}

\noindent The main property of $\tPa(A)$ is presented in Proposition \ref{prop:weinsteintruncation} which says that the $1$-truncation of the Weinstein $2$-groupoid, i.e. the quotient space of $1$-morphisms by $2$-morphims, coincides with the Weinstein groupoid of $A$. Then, we introduce the notion of 2-representation or representation of a $2$-groupoid 2-$\G$, as a strict $2$-functor 2-$\G\to \tGau(\E)$. We state the first main result of this paper, which says that a $2$-term representation up to homotopy of a Lie algebroid $A$ can be integrated to a representation of the Weinstein $2$-groupoid of $A$. More precisely, we prove the following result.

\begin{theoremz}

\emph{A $2$-term representation up to homotopy of $A$ on $\E$ can be integrated into a strict $2$-functor}
\begin{align*}
\tPa(A)\overset{\hol}\longrightarrow \tGau(\E).
\end{align*}

\end{theoremz}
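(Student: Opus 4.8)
The plan is to build $\hol$ degree by degree from the three pieces of infinitesimal data underlying a $2$-term representation up to homotopy of $A$ on $\E=(\partial\colon C\to E)$: the two $A$-connections $\nablaE$ and $\nablaC$, the bundle map $\partial$, and the curvature form $\omega\in\Omega^2(A;\Hom(E,C))$. Recall that these satisfy the structure equations of Arias--Crainic, namely that $\partial$ is parallel, $\partial\circ\nablaC=\nablaE\circ\partial$, that the curvatures are $R_{\nablaE}=\partial\circ\omega$ and $R_{\nablaC}=\omega\circ\partial$, and that $\omega$ is closed for the induced connection, $d_\nabla\omega=0$. On objects $\hol$ is the identity of $M$. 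On a $1$-morphism represented by an $A$-path $a$ with base path $\gamma$ from $x$ to $y$, I would set $\hol(a):=(\holC(a),\holE(a))$, the pair of parallel transports of $\nablaC$ and $\nablaE$ along $\gamma$. Since $\partial$ is parallel, these two transports intertwine with $\partial$, so $\hol(a)$ is an invertible chain map $(C_x,E_x)\to(C_y,E_y)$, that is, a $1$-morphism of $\tGau(\E)$. Well-definedness on thin homotopy classes is automatic: the holonomy of a connection changes, under a homotopy rel endpoints, only by the integral of its curvature over the swept region, and a thin homotopy sweeps out zero area. Multiplicativity $\hol(a\circ b)=\hol(a)\circ\hol(b)$ is the usual concatenation property of parallel transport applied in each degree, so this already yields a morphism of the underlying $1$-groupoids.

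The assignment on $2$-morphisms is where $\omega$ enters. A $2$-morphism is a homotopy class of $A$-homotopies, and an $A$-homotopy is a Lie algebroid morphism $a\colon T(I\times I)\to A$ interpolating between two $A$-paths $a_0,a_1$. I would transgress $\omega$ along it: pulling $\omega$ back by $a$, conjugating by the parallel transports of $\nablaE$ and $\nablaC$ to the endpoints, and integrating over the square produces a bundle map $H\in\Hom(E_x,C_y)$, the candidate chain homotopy. That $H$ is indeed a chain homotopy from $\hol(a_0)$ to $\hol(a_1)$ is precisely the content of the curvature equations: the first-variation (transgression) formula for parallel transport expresses $\holE(a_1)-\holE(a_0)$ and $\holC(a_1)-\holC(a_0)$ as the conjugated integrals of $R_{\nablaE}$ and $R_{\nablaC}$ over the homotopy. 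Substituting $R_{\nablaE}=\partial\omega$ and $R_{\nablaC}=\omega\partial$, and pulling the parallel map $\partial$ through the transports, turns these into $\partial\circ H$ and $H\circ\partial$, which are exactly the two chain-homotopy identities.

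I then need independence of the chosen representative, i.e. invariance under homotopies of $A$-homotopies. Given such a three-dimensional homotopy $T(I^3)\to A$, the difference $H_1-H_0$ is computed by Stokes' theorem as the integral over the cube of the covariant derivative of the transgressed form; the two-dimensional boundary faces cancel by the rel-endpoint conditions, and the interior contribution is the integral of $d_\nabla\omega$, which vanishes. Hence $H$ depends only on the $2$-morphism. To conclude that $\hol$ is a \emph{strict} $2$-functor I would check compatibility with all three compositions. Unit $2$-morphisms, arising from constant homotopies, map to the zero chain homotopy. Vertical composition splits the integration square into two sub-rectangles, and matching parallel transports at the shared edge reproduces exactly the vertical composition law for chain homotopies in $\tGau(\E)$. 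Horizontal composition -- whiskering by $1$-morphisms together with side-by-side concatenation of homotopies -- is treated the same way in the orthogonal direction.

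The main obstacle I anticipate is this last point: establishing strict, rather than merely weak, compatibility with horizontal composition and the interchange law. The transgression integral over a composite square must coincide on the nose with the prescribed whiskering and interchange formulas in $\tGau(\E)$, and this forces one to track very carefully the parallel transports that appear as conjugating factors, using the closedness $d_\nabla\omega=0$ together with the two curvature identities in concert. Getting these factors to align exactly, instead of up to a coherent isomorphism, is the delicate heart of the argument; the cube-and-Stokes computation establishing well-definedness on $2$-morphisms is the second most technical step, as it is the one place where the full force of $d_\nabla\omega=0$ is required.
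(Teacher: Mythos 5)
Your proposal follows essentially the same route as the paper's proof in Appendix A: holonomy of $\nablaE,\nablaC$ on $1$-morphisms (a chain map since $\partial$ intertwines the connections), the conjugated double integral of $\omega$ on $2$-morphisms, with the Nijenhuis first-variation formula for parallel transport as the key tool to establish the chain-homotopy identities and thin-homotopy invariance, additivity/Fubini arguments for the vertical and horizontal compositions, and $\nabla\omega=0$ for independence of the $3$-homotopy class. You also correctly identify the strict compatibility with horizontal composition as the most delicate computation, which is indeed the longest lemma in the paper's proof.
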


\noindent The proof of Theorem 1. can be found in Appendix A. The strict $2$-functor of Theorem 1 is referred to as the \textbf{holonomy $2$-representation}. Since a $2$-term representation up to homotopy of a Lie algebroid $A$ integrates to an action of $\tPa(A)$, rather than of the groupoid $\G(A)$, it is natural to consider the \emph{transformation $2$-groupoid} $\tPa(A)\ltimes \E$. The fact that $\tPa(A)\times \E$ is a 2-groupoid is the statement of Theorem \ref{thm:semidirect2groupoid}.

In section 5, we explain how the transformation $2$-groupoid $\tPa(A)\ltimes \E$ can be used in order to recover the \vbg integrating the \vba $D=A\ltimes \E$ underlying a $2$-term representation up to homotopy of a Lie algebroid $A$. This is the statement of the second main result of this work.

\begin{theoremz}
 \emph{Given a representation up to homotopy of a Lie algebroid $A$ on a $2$-term complex $\E$, the Weinstein groupoid of the associated \vba $D=A\ltimes \E$ identifies with the 1-truncation groupoid of the transformation 2-groupoid $\tPa(A)\ltimes \E$.}
\end{theoremz}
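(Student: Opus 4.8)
The plan is to realize both groupoids as quotients of path spaces and to match them by an explicit isomorphism. Recall that the Weinstein groupoid $\G(D)$ of the \vba $D=A\ltimes\E=A\oplus E\oplus C$ is the quotient $P(D)/{\sim_D}$ of the space of $D$-paths by $D$-homotopy; since $D$ is a Lie algebroid over the side bundle $E$, this is a (topological) groupoid over $E$. The $1$-truncation of $\tPa(A)\ltimes\E$ — a $2$-groupoid by Theorem \ref{thm:semidirect2groupoid} — is the groupoid whose arrows are the $1$-morphisms of the transformation $2$-groupoid modulo its $2$-morphisms, and is likewise a groupoid over $E$. First I would make both sides explicit: using the splitting $D\cong A\oplus E\oplus C$, a $D$-path decomposes into an underlying $A$-path together with fibre data in $E$ and $C$, and the linearity of the anchor and bracket of $D$ over $A$ forces these fibre components to obey exactly the parallel-transport equations that define the holonomy of the representation up to homotopy. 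Dually, by construction a $1$-morphism of $\tPa(A)\ltimes\E$ is a thin-homotopy class of $A$-paths together with the image of $\E$-data under the holonomy $2$-functor $\hol$ of Theorem 1.

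Using these descriptions I would define a map $\Phi$ on representatives, sending a $D$-path to the pair consisting of its underlying $A$-path — regarded as a $1$-morphism of $\tPa(A)$, whose $2$-morphism class recovers an arrow of $\G(A)$ by Proposition \ref{prop:weinsteintruncation} — and the transformation datum extracted from its $E$- and $C$-components. The point is that Theorem 1 identifies the integration of the flat part of the representation along the $A$-path with precisely the invertible chain map (and accompanying chain-homotopy data) that a $1$-morphism of the transformation $2$-groupoid requires, so $\Phi$ has the correct target and intertwines the respective source and target maps, both of which project to $E$ and, after truncation, cover $\G(A)$.

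The main obstacle is to show that $\Phi$ descends to a bijection of quotients, i.e. that $D$-homotopy corresponds exactly to the equivalence generated by the $2$-morphisms of $\tPa(A)\ltimes\E$. Here I would decompose a $D$-homotopy, again via the splitting, into an $A$-homotopy of the underlying $A$-paths together with a variation of the fibre data: the $A$-homotopy part accounts for the $2$-morphisms inherited from $\tPa(A)$ (and collapses to the relation defining $\G(A)$), while the fibre variation must be matched with the chain-homotopy part of the $2$-morphisms coming from $\tGau(\E)$. Proving that these two matchings are mutually inverse — that every $D$-homotopy induces a genuine $2$-morphism of the transformation $2$-groupoid and, conversely, that each such $2$-morphism lifts to a $D$-homotopy — is the delicate step, and is exactly where the compatibility of $\hol$ with $2$-morphisms (Theorem 1) and the fibration $D\to A$ of Proposition \ref{prop:vbafibration} do the work, the latter letting one reduce the analysis of $D$-homotopies to $A$-homotopies plus the behaviour in the kernel directions $E\oplus C$.

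Finally I would check that $\Phi$ is a morphism of groupoids: compatibility with concatenation of $D$-paths corresponds to horizontal composition of $1$-morphisms after truncation, and compatibility with inversion is immediate from functoriality of $\hol$. Surjectivity holds because, by the explicit form of $\hol$, every $1$-morphism of $\tPa(A)\ltimes\E$ is the image of some $D$-path, while injectivity is the forward implication of the previous paragraph. Throughout, the homotopy long exact sequence of Theorem \ref{prop:longexactfibrationvbacase} serves to keep track of the monodromy and to verify that the fibres of $\G(D)$ and of the $1$-truncation over a fixed arrow of $\G(A)$ are identified compatibly. Assembling these steps yields the asserted isomorphism between $\G(D)$ and the $1$-truncation groupoid of $\tPa(A)\ltimes\E$.
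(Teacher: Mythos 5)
Your overall strategy coincides with the paper's: split the fibration $D\to A$, decompose $D$-paths and $D$-homotopies into an $A$-part plus kernel data, and match the resulting quotient with the $1$-truncation of $\tPa(A)\ltimes\E$. However, there is a genuine gap at exactly the step you yourself flag as delicate, and the tools you invoke there are not the ones that close it. Theorem \ref{thm:2functor} only establishes that $\hol$ is a strict $2$-functor; it says nothing about \emph{which} pairs of split $D$-paths are joined by a $D$-homotopy. The statement you actually need is: $(c_0,a_0,e_0)$ and $(c_1,a_1,e_1)$ represent the same element of $\G(D)$ if and only if $e_0=e_1$ and there is an $A$-homotopy $\sigma:a_0\Rightarrow a_1$ with $c_1-c_0=\hol(\sigma)(e_0)$. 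Proving this requires (i) the identification $P_1(D)\simeq P_1(\K)\rtimes P_1(A)$ and the description of $D$-homotopies in split form via a map $\partial_{ext}(\sigma,\cdot)$ obtained by solving a differential equation over the $A$-homotopy $\sigma$ --- this is the content of \cite[Prop.\,4.1 and 4.3]{Br}, not of Proposition \ref{prop:vbafibration}, which only asserts that $D\to A$ is a fibration; (ii) the two-stage factorization of the quotient through $\G(\K)\rtimes P_1(A)$, which uses that $\K$ integrates to the $2$-vector bundle $\pE^*C$ (Proposition \ref{prop:int:kernel}) so that $\K$-paths reduce to their integrals as in Remark \ref{rem:int:kpaks}; and (iii) the explicit computation that $\partial_{ext}(\sigma,e')=\bigl(\hol(\sigma)(e),e'\bigr)$ with $e=\holE_{a_0^{-1}}(e')$, which ties $\partial_{ext}$ to formula \eqref{eq:holonomy-homotopy}. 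None of these is supplied or correctly attributed in your outline, so the ``mutually inverse matchings'' claim remains an assertion.

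A secondary imprecision: you write that linearity of the anchor and bracket ``forces the fibre components to obey exactly the parallel-transport equations.'' If that were literally true, a $D$-path would be the horizontal lift of its $A$-part and your map $\Phi$ could never produce a nonzero $C$-component. In fact a general $D$-path is a horizontal lift corrected by an arbitrary $\K$-path, and it is only after quotienting by $\K$-homotopies (using that core sections commute, so $\K$-paths integrate to $\int_0^1 c(s)\,ds$) that the kernel data collapses to a single element $(c,e')\in\pE^*C$. Finally, the homotopy long exact sequence of Theorem \ref{prop:longexactfibrationvbacase} plays no role in this argument --- it is used for the integrability criterion, not for identifying $\G(D)$ with the truncation --- so invoking it here to ``keep track of the monodromy'' does not substitute for the missing analysis of $D$-homotopies.

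(Correction to the reference in point (ii) above: Remark \ref{rem:int:kpaths}.)
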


This point of view has both abstract and practical advantages. On the one hand, it allows us to interpret the integrability of a \vba as the vanishing of the second transgression map in the homotopy long exact sequence.  On the other hand, as we shall illustrate by various examples, the procedure can be implemented in order to obtain the \vbg of a \vba in a quite explicit way.

As a final note, the construction of the holonomy as a strict $2$-functor was inspired by the construction of Schreiber and Waldorf \cite{SW}, although their results would be hard to apply when the boundary map $\partial:C\to E$ does not have not constant rank. Also, the $2$-functoriality of the holonomy could probably be deduced from either \cite{SW} or \cite{CA01} by working leafwise, which we avoid using a direct proof. This also allows to compute explicitly the holonomy, which could be hard to deduce from \cite{SW} or \cite{CA01}.

Our approach is original as it is based on Lie algebroid extensions and fibrations, which we believe to be more natural for differential geometers. Furthermore, the construction of a strict transformation $2$-groupoid is original in this context. In fact, similar constructions are studied in \cite{SC, SC0, SC1} however, as we explain, they lead to different type of integrations that seem difficult to relate with \vbgs at first sight.

 \textbf{Acknowledgements:} O. Brahic would like to thank CNPq (Brazil), the PPGMA at UFPR, Curitiba, as well as IMPA, Rio de Janeiro, who supported both financially and logistically this project. C. Ortiz is thankful for financial support from CAPES-COFECUB (grant 763/13) and CNPq (grant 4827967/2013-8).

 The authors would also like to thank Matias del Hoyo and Pedro Frejlich for their comments in the early stages of this work.
\section{Generalities}

\subsection{$2$-term representations up to homotopy}
The notion of representation up to homotopy of a Lie algebroid was introduced in \cite{AriasCrainic}.
In our case of interest, namely when the underlying graded vector bundle is concentrated in degrees $-1$ and $0$,
the definition boils down to the following.

We consider a Lie algebroid $A$ over $M$, and $\E=(\partial:C\to E)$ a two term complex of vector bundles over $M$, concentrated in degree $-1$ and $0$. 

\begin{definition}\label{def:ruh}
A \textbf{2-term representation up to homotopy} of $A$ on $\mathcal{E}$ is a triple $(\nablaE,\nablaC,\omega)$ where $\nablaE,\nablaC$ are $A$-connections on $E$ and $C$ respectively, 
 and $\omega\in \Omega^2(A,\Hom(E,C))$, such that the following compatibility conditions are satisfied:
\begin{align}
\partial \circ \nablaC &= \nablaE\circ \partial,\label{def:ruh1}\\ 
                          \partial\circ \omega&=\omegaE,\\
                            \omega\circ \partial&=\omegaC,\\
\label{eq:ruh:omegaclosed} \nabla \omega&=0.
\end{align}
\end{definition}
Here $\omegaE$ and $\omegaC$ denote the respective curvatures of $\nablaE$ and $\nablaC$\!,
while $\nabla \omega\in\Omega^3(A,\Hom(E,C))$ denotes the covariant derivative naturally induced on $\Omega^\bullet(A,\Hom(E,C))$ by $\nablaE$ and $\nablaC$\!.
\begin{remark}
 In the original definition \cite{AriasCrainic}, the boundary operator $\partial:C\to E$ is part of the data that defines a representation up to homotopy.
 In this work, it is relevant to set the boundary operator apart and think of $A$ acting on a two term complex via $(\nablaE,\nablaC,\omega)$.
\end{remark}

There also exists a notion of representation up to homotopy of Lie groupoids \cite{AriasCrainic2, GM10}. In the case where the underlying graded vector bundle is concentrated in degrees $-1$ and $0$, it can be described as follows. 

Recall that any vector bundle $E\to M$ determines a smooth category $L(E)$ whose objects are elements of $M$, and morphisms between $x,y\in M$ are linear maps $E_x\to E_y$ (not necessarily invertible). Given a Lie groupoid $\G\tto M$, a \textbf{unital quasi-action} of $\G$ on $E$ is a smooth map $\Delta: \G\to L(E)$ that commutes with the unit, source and target maps. Note that $\Delta$ does not necessarily commute with the multiplication. When it is the case, the unital quasi action is said to be \textbf{flat}. Also, $\Delta_g:E_{\s(g)}\to E_\t(g)$ is not required to be invertible. A \textbf{representation} of $\G$ on $E$ is just a flat unital quasi-action $\Delta:\G\to L(E)$ such that $\Delta_g:E_{\s(g)}\to E_{\t(g)}$ is invertible for each $g\in \G$. 

Let us now fix a Lie groupoid $\G\tto M$, and $\E=(\partial:C\to E)$ a two term complex of vector bundles over $M$, concentrated in degree $-1$ and $0$. We denote by $\G^{(2)}=\{(g,h): \s(g)=\t(h)\}$ the space of composable arrows of $\G$.

\begin{definition}\label{def:ruthgroupoids}
 A \textbf{$2$-term representation up to homotopy} of $\G$ on $\E=(\partial:C\to E)$ is given by a triple $(\DeltaC,\DeltaE,\Omega)$, where $\Delta^C$ and $\DeltaE$ are unital quasi-actions on $C$ and $E$ respectively, and $\Omega \in C^\infty(\G^{(2)}, \mathrm{Hom}(E,C))$ is a normalized cochain (i.e. $\Omega_{g_1,g_2}=0$ if either $g_1$ or $g_2$ is a unit), satisfying the following conditions:
\begin{align}
\partial\circ \DeltaC_{g_1}   =\DeltaE_{g_1}\circ \partial,\\
\Omega_{g_1,g_2}\circ \partial =\DeltaC_{g_1g_2}- \DeltaC_{g_2}\circ \DeltaC_{g_1},\\
\partial\circ\Omega_{g_1,g_2}=\DeltaE_{g_1g_2}- \DeltaE_{g_2}\circ \DeltaE_{g_1},\\
\DeltaC_{g_1}\circ \Omega_{g_2,g_3}-\Omega_{g_1g_2,g_3}+\Omega_{g_1,g_2g_3}-\Omega_{g_1,g_2}\circ \DeltaE_{g_3}=0,
\end{align}

\begin{align}
\partial\circ \DeltaC_{g_1}   &=\DeltaE_{g_1}\circ \partial,\\
\Omega_{g_1,g_2}\circ \partial &=\DeltaC_{g_1g_2}- \DeltaC_{g_2}\circ \DeltaC_{g_1},\\
\partial\circ\Omega_{g_1,g_2}&=\DeltaE_{g_1g_2}- \DeltaE_{g_2}\circ \DeltaE_{g_1},\\
\Omega_{g_1g_2,g_3}-\Omega_{g_1,g_2g_3}
&=\DeltaC_{g_1}\circ \Omega_{g_2,g_3}-\Omega_{g_1,g_2}\circ \DeltaE_{g_3},
\end{align}

for every triple $g_1,g_2,g_3\in \G$ where the above multiplications make sense.
\end{definition}

\subsection{\texorpdfstring{\vbgs}{VB-groupoids}}
Closely related with $2$-term representations up to homotopy of Lie algebroids and groupoids, are the notions of \vba and \vbg, which we briefly recall. Detailed expositions can be found in \cite{mackenzie-book, GM08, GM10}.

\begin{definition} A \textbf{$\mathcal{VB}$-groupoid} is a square
\begin{align}\label{VBgroupoid}
\SelectTips{cm}{}\xymatrix{ \HH \ar@<0.5ex>[d]\ar@<-.5ex>[d]\ar[r]^{q_H} &\G\ar@<0.5ex>[d]\ar@<-.5ex>[d]\\
           E \ar[r]^{q_E}& M}
\end{align}
where double arrows denote Lie groupoid structures and single arrows denote vector bundles. It is required that the structure mappings (source, target, multiplication, unit section and inversion) that define the Lie groupoid $\HH\rightrightarrows E$ be morphisms of vector bundles over the corresponding structure mappings defining the Lie groupoid $\G\rightrightarrows M$.
\end{definition}

 A $\mathcal{VB}$-groupoid as in \eqref{VBgroupoid} will be denoted by $(\HH;\G,E;M)$.

\begin{example}[Tangent groupoid]
Let $\G\rightrightarrows M$ be a Lie groupoid. The tangent bundle $T\G$ has a canonical Lie groupoid structure over $TM$. The structural maps of $T\G\rightrightarrows TM$ are defined by applying the tangent functor to each of the structural maps of $\G\rightrightarrows M$. It can be easily checked that with respect to these maps, the quadruple $(T\G;\G,TM;M)$ is a $\mathcal{VB}$-groupoid, referred to as the \textbf{tangent groupoid} of $\G$.
\end{example}

\begin{example}[Cotangent groupoid]

Given a Lie groupoid $\G\rightrightarrows M$ with Lie algebroid $A$, the cotangent bundle $T^*\G$ is equipped with a groupoid structure over $A^*$. An explicit description of this groupoid can be found also in \cite{CDW}. The source and target maps are defined by
$$\tilde{s}(\alpha_g)u=\alpha_g(Tl_g(u-Tt(u)))\quad \text{ and }\quad \tilde{t}(\alpha_g)v=\alpha_g(Tr_g(v))$$
where $\alpha_g \in T^*_g\G$, $u\in A_{s(g)}\G$ and $v\in A_{t(g)}\G$. The multiplication on $T^*G$ is defined by
$$(\alpha_g\circ \beta_h)(X_g\bullet Y_h)= \alpha_g(X_g)+ \beta_h(Y_h)$$
for $(X_g,Y_h)\in T_{(g,h)}\G_{(2)}$. We refer to $T^*\G$ with the groupoid structure over $A^*$ as the \textbf{cotangent groupoid} of $\G$. 
\end{example}

\subsubsection*{\vbgs and $2$-term representations up to homotopy of Lie groupoids}

 Given a $2$-term representation up to homotopy $(\Delta^E,\Delta^C, \Omega)$ of a Lie groupoid $\s,\t:\G\tto E$ on $\E=(\partial:C\to E)$, one can construct a \vbg $(\G\ltimes E;\G,E;M)$ in the following way \cite{GM10}. Consider the direct sum of the pull back bundles $\G\ltimes E:=\t^*C\oplus \s^*E$, which is a vector bundle over $\G$. Then $\G\ltimes E$ comes equipped with a groupoid structure over $E$ whose source, target and identity maps are given by
$$\tilde{s}(c,g,e)=e, \quad \tilde{t}(c,g,e)=\partial(c)+\Delta^E_g(e), \quad \tilde{1}(e)=0\oplus e,$$
and with the following groupoid multiplication:
$$(c_1,g_1,e_1)\cdot (c_2,g_2,e_2)=\left(c_1+\Delta^C_{g_1}(c_2)-\Omega_{g_1,g_2}(e_2),g_1\cdot g_2,e_2\right).$$
One may think of $\G\ltimes E\tto E$ as a \emph{transformation groupoid} associated with the $2$-term representation up to homotopy $(\Delta^E,\Delta^C, \Omega)$.

Conversely, given a \vbg $(\HH,\G,E,M)$, the \textbf{core} bundle is the vector bundle $C:=\mathrm{Ker}(s_{\HH})|_{M}$. The \textbf{core sequence} of $\HH$ is the exact sequence of vector bundles over $\G$

$$0\to \t^*C\to \HH\to \s^*E\to 0,$$

\noindent where the first map is $(g,c_{\t(g)})\mapsto c_{\t(g)}0^{\HH}_g$ and the second map is the bundle map induced by the source $\s_{\HH}:\HH\to E$. A \textbf{right horizontal lift} of $\HH$ is defined in \cite[def.\,3.8]{GM10} as a right splitting of the core sequence, which coincides with the canonical splitting $\HH_{x}=C_x\oplus E_x$ along any unit $x\in \G$.  This allows one to identify $\HH$ with a \vbg of the form $\t^*C\oplus \s^*E$ as above for some $2$-term representation up to homotopy $(\Delta^E,\Delta^C, \Omega)$. Right horizontal lifts always exist and we refer to \cite{GM10} for more details. 
\subsection{\texorpdfstring{\vbas}{VB-algebroids}}\label{sec:vba}
Let us recall some definitions regarding double vector bundles and their special sections.
We refer to \cite{KU, mackenzie-book, Pradines} for a more detailed treatment.

\begin{definition}
A \textbf{double vector bundle} (DVB) is a commutative square
\begin{align}\label{doubleVB}
\SelectTips{cm}{}
\xymatrix{ D \ar[d]_{p_D}\ar[r]^{p} &A\ar[d]^{p_A}\\
           E \ar[r]^{\pE}& M}\end{align}
where all four sides are vector bundles, $p_D$ is a vector bundle morphism over $p_A$, and where $+_{E}: D\times_E D \longrightarrow D$ is a vector bundle morphism over $+: A\times_M A \longrightarrow A$. Here $+_{E}$ is  the addition map for the vector bundle $D\longrightarrow E$.
\end{definition}

In other words, the quadruple $(D;A,E;M)$ is a $\mathcal{VB}$-groupoid where $D\longrightarrow E$
and $A\longrightarrow M$ are equipped with the Lie groupoid structure induced by the corresponding vector bundle structures. 

Given a DVB $(D; A, E; M)$, the vector bundles $A$ and $E$ are called
the \textbf{side bundles}. The zero sections are denoted by $0^{A}: M \longrightarrow A$, $0^{E}: M \longrightarrow
E$, $0^D_{A}: A \longrightarrow D$ and $0^D_{E}: E
\longrightarrow D$. Elements of $D$ are written $(d; a, e; m)$,
where $d \in D$, $m\in M$ and $a=p(d) \in A_m$, $e=p_D(d)\in
E_m$.

The \textbf{core} $C$ of a DVB is the intersection of the kernels of
$p$ and $p_D$. It has a natural vector bundle structure over
$M$, the projection of which we call $\pC: C \longrightarrow M$. The
inclusion $C \hookrightarrow D$ is usually denoted by
$$
C_m \ni c \longmapsto \overline{c} \in p^{-1}(0^A_{m}) \cap p_{D}^{-1}(0^E_{m}).
$$

Given a DVB $(D;A,E;M)$, the space of sections $\Gamma(E, D)$ is
generated as a $C^{\infty}(E)$-module by two distinguished classes of
sections, namely linear sections and core sections, which we now describe.

\begin{definition}\label{def:coresections}\em
  For a section $c: M \rightarrow C$, the corresponding \textbf{core
    section} $\hat{c}: E \rightarrow D$ is defined as
\begin{equation}\label{core_section}
\hat{c}(e_m) = 0^D_{E}(e_m) +_{A} \overline{c(m)}, \,\, m \in M, \, e_m \in E_m.
\end{equation}
\end{definition}

We denote the space of core sections by $\Gamma_c(E, D)$.

\begin{definition}\em
  A section $\chi \in \Gamma(E,D)$ is called \textbf{linear} if $\chi: E
  \rightarrow D$ is a vector bundle morphism covering a section $a:M\longrightarrow A$. The space of linear sections is denoted by  $\Gamma_{\ell}(E, D)$.
\end{definition}

\begin{definition}\label{def:vba}

A \textbf{$\mathcal{VB}$-algebroid} is a double vector bundle $(D;A,E;M)$ as follows:
\begin{equation}\label{vbdiagram}
\SelectTips{cm}{}
\xymatrix{ D \ar[d]_{p_D}\ar[r]^{p} &A\ar[d]^{p_A}\\
           E \ar[r]^{\pE}& M,}
\end{equation}
where $D\longrightarrow E$ and $A\longrightarrow M$ are equipped with Lie algebroid structures, such that the anchor map 
$\rho_D: D \longrightarrow TE$ is a bundle morphism over $\rho_A: A \longrightarrow TM$ and the following bracket conditions are satisfied:
\begin{itemize}
\item[i)] $[\Gamma_{\ell}(E, D), \Gamma_{\ell}(E,D)]_D \subset \Gamma_{\ell}(E,D)$;
\item[ii)] $[\Gamma_{\ell}(E,D), \Gamma_c(E,D)]_D \subset \Gamma_c(E,D)$;
\item[iii)] $[\Gamma_c(E,D), \Gamma_c(E,D)]_D = 0$.
\end{itemize}
\end{definition}

\begin{example}
 Given a Lie algebroid $A\longrightarrow M$, the application of the tangent functor to each of the structural maps
 defining the vector bundle $A\longrightarrow M$, determines a double vector bundle $(TA;A,TM;M)$. Moreover, the Lie algebroid structure of $p_A:A\longrightarrow M$ can be lifted to $Tp_A:TA\longrightarrow TM$, making the quadruple $(TA;A,TM;M)$ into a $\mathcal{VB}$-algebroid called the \textbf{tangent algebroid} of $A$. For more details, see \cite{mackenzie-book}.
\end{example}

\begin{example}
Given a Lie algebroid $A\to M$, the cotangent bundle $T^*A$ inherits a Lie algebroid structure over $A^*$, yielding a $\mathcal{VB}$-algebroid $(T^*A;A;A^*,M)$ referred to as the \textbf{cotangent algebroid} of $A$. For more details, see \cite{mackenzie-book} and the references therein. 

\end{example}



\subsubsection*{\vbas and 2-term representations up to homotopy of Lie algebroids}

This section follows closely \cite{GM08}. Let $(D;A,E;M)$ be a $\mathcal{VB}$-algebroid. The space of linear sections $\Gamma_\ell(E,D)$ is locally free as a $C^{\infty}(M)$-module.
Hence, there exists a vector bundle $\hat{A}\to M$ whose space of sections $\Gamma(\hat{A})$ identifies with $\Gamma_\ell(E,D)$ as  a $C^{\infty}(M)$-module. Also, every linear section $\chi:E\to D$ covers a section $a:M\to A$, inducing an exact sequence of vector bundles over $M$
\begin{equation}\label{eq:exactVB}
\SelectTips{cm}{}\xymatrix{ \Hom(E,C) \ \ar@{^{(}->}@<-0.15pt>[r] &\hat{A}  \ar@{->>}[r]<-0.15pt> & A.}
\end{equation}

One observes that $\hat{A}\to M$ is a Lie algebroid with bracket $[\chi_1,\chi_2]_{\hat{A}}=[\chi_1,\chi_2]_D$ and anchor map $\rho_{\hat{A}}(\chi)=\rho_A(a)$, for every $\chi\in\Gamma(\hat{A})$ linear section covering $a\in\Gamma(A)$. Also, the vector bundle $\mathrm{Hom}(E,C)\to M$ is equipped with a Lie algebroid structure zero anchor map and bracket $[\phi,\psi]=\phi\circ \partial \circ \psi - \psi \circ \partial \circ \phi$, where $\partial:C\to E; c\mapsto \rho_D(c)$ is the \textbf{core anchor} of $D$. Thus, the sequence \eqref{eq:exactVB} is an extension of Lie algebroids over $M$.

\begin{example}
Given a Lie algebroid $A\to M$, one can consider the tangent \vba $(TA;A,TM;M)$. In this case, the exact sequence \eqref{eq:exactVB} reads:
$$\SelectTips{cm}{}\xymatrix{ \Hom(TM,A) \ \ar@{^{(}->}@<-0.25pt>[r] &\mathfrak{J}(A)  \ar@{->>}[r]<-0.25pt> & A,}$$
where $\mathfrak{J}(A)\to M$ is the first jet algebroid associated to $A$.
\end{example}

There exists a natural representation of $\hat{A}$ on $C$:
\begin{equation}\label{AhatC}
\Phi_{\chi}(c):=[\chi,\hat{c}]_D.
\end{equation}
Similarly, there is a representation of $\hat{A}$ on $E$, defined as follows. Since $\rho_D:D\to TE$ is a DVB-morphism, for each linear section $\chi\in\Gamma_l(E,D)$, the vector field $\rho_D(\chi)\in\mathfrak{X}(E)$ is linear, and hence $\mathcal{L}_{\rho_D(\chi)}(\xi)\in \Gamma(E^*)$ for every $\xi\in\Gamma(E^*)\simeq C^{\infty}_{lin}(E)$, viewed as a fiberwise linear function on $E$. Thus, we obtain a representation of $\hat{A}$ on $E$ by the formula
\begin{equation}\label{AhatE}
\langle \xi, \Psi_{\chi}(e)\rangle = \rho_{\hat{A}}(\chi)\langle \xi,e\rangle - \langle \mathcal{L}_{\rho_D(\chi)}\xi, e\rangle,
\end{equation}

\noindent for every $\chi\in\Gamma_l(E,D)$, $\xi\in\Gamma(E^*)$ and $\Gamma(e)$.

Now, following \cite{GM08}, we proceed to briefly explain the relation between \vbas over $A$ and $2$-term representations up to homotopy of $A$. Let $(D;A,E;M)$ be a $\mathcal{VB}$-algebroid. This determines a $2$-term complex $\partial:C\to E$ defined by the core anchor map. A splitting $h:A\to \hat{A}$ of the sequence \eqref{eq:exactVB} is referred to as a \textbf{horizontal lift}. Notice that horizontal lifts are not required to be morphisms of Lie algebroids. The failure for $h$ being a Lie algebroid morphism is controlled by the element $\omega\in \Gamma(\wedge^2A^*\otimes \mathrm{Hom}(E,C))$ defined by
\begin{equation}\label{curvaturehorlift}
\omega(a,b):=h([a,b]_A) - [h(a),h(b)]_{D}.
\end{equation}

The choice of a horizontal lift $h:A\to\hat{A}$ defines $A$-connections on $E$ and $C$,
by pulling back via $h:A\to \hat{A}$ the representations \eqref{AhatC} and \eqref{AhatE}, respectively.
That is, the $A$-connection $\nablaC$ on $C$ is given by
\begin{equation}\label{AconnC}
\nablaC_ac:=[h(a),\hat{c}]_{D},
\end{equation}

\noindent and similarly, the $A$-connection on $E$ is defined by
\begin{equation}\label{AconnE}
\nablaE_ae:=\Psi_{h(a)}e,
\end{equation}
where $\Psi$ as in \eqref{AhatE}. These $A$-connections are not flat unless $\omega=0$.

As explained in \cite{GM08}, the $A$-connections $\nablaE,\nablaC$ together with the element $\omega\in \Gamma(\wedge^2A^*\otimes \mathrm{Hom}(E,C))$ given by \eqref{curvaturehorlift}, define a representation up to homotopy of the Lie algebroid $A$ on the $2$-term complex $\partial:C\to E$. See Definition \ref{def:ruh}. Conversely, a representation up to homotopy $(\nabla^E,\nabla^C,\omega)$ of a Lie algebroid $A$ on a $2$-term complex $\partial:C\to E$ induces a Lie algebroid structure on the split double vector bundle $D=A\times_M\times C\times_M E$. This is summarized in the following result.

\begin{theorem}{(\cite{GM08})}\label{lma:conn data} Let $A\to M$ be a Lie algebroid and $E, \ C$ vector bundles over $M$. The  formulas \eqref{curvaturehorlift}, \eqref{AconnC} and \eqref{AconnE}  define a $1:1$ correspondence between split \vba structures $D=A\times_M C\times_M E$ with core anchor $\partial:C \to E$ and representations up to homotopy $(\nabla^E,\nabla^C,\omega)$ of $A$ on the complex $\partial:C\to E$ as above.
\end{theorem}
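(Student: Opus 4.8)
The plan is to establish the correspondence at the level of sections, using that for a \vba the module $\Gamma(E,D)$ is generated over $C^\infty(E)$ by linear and core sections, and that a \vba bracket is completely determined by its values on these two classes together with the anchor. For the split double vector bundle $D=A\times_M C\times_M E$ there is a canonical horizontal lift $h:A\to\hat A$ induced by the product splitting, so the triple $(\nabla^E,\nabla^C,\omega)$ attached to a \vba via \eqref{curvaturehorlift}, \eqref{AconnC}, \eqref{AconnE} is canonical and involves no further choice; this is what makes the assignment well defined in the first place.

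First I would treat the forward direction (\vba $\to$ representation up to homotopy). Given the \vba structure, the discussion preceding the statement already produces the extension \eqref{eq:exactVB}, the $\hat A$-representations $\Phi,\Psi$ of \eqref{AhatC}, \eqref{AhatE}, and hence $\nabla^C,\nabla^E,\omega$. It remains to verify the four axioms of Definition \ref{def:ruh}. The chain-map condition \eqref{def:ruh1}, $\partial\circ\nabla^C=\nabla^E\circ\partial$, follows from the compatibility of the core anchor with $\rho_D$ together with bracket condition (ii) of Definition \ref{def:vba}, which guarantees that $\partial$ intertwines $\Phi$ and $\Psi$ as $\hat A$-representations. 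The two curvature identities $\partial\circ\omega=\omega_E$ and $\omega\circ\partial=\omega_C$ are obtained by computing the curvatures of $\nabla^E$ and $\nabla^C$: using \eqref{curvaturehorlift} to replace $[h(a),h(b)]_D$ by $h([a,b]_A)-\omega(a,b)$ and applying the Jacobi identity in $D$, the curvature collapses to the action of the vertical linear section $\omega(a,b)\in\Hom(E,C)$ on $C$ (resp. $E$), which is exactly $\omega\circ\partial$ (resp. $\partial\circ\omega$). Finally, the closedness $\nabla\omega=0$ of \eqref{eq:ruh:omegaclosed} is the Bianchi-type identity coming from the Jacobi identity in $D$ on three horizontal lifts $(h(a),h(b),h(c))$.

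For the reverse direction (representation up to homotopy $\to$ \vba) I would define the bracket on generators by declaring $[h(a),h(b)]_D:=h([a,b]_A)-\omega(a,b)$, $[h(a),\hat c]_D:=\widehat{\nabla^C_a c}$, and $[\hat c_1,\hat c_2]_D:=0$, together with the brackets involving vertical linear sections $\phi\in\Gamma(\Hom(E,C))$ dictated by the abelian-ideal bracket $[\phi,\psi]=\phi\circ\partial\circ\psi-\psi\circ\partial\circ\phi$ and by the $\hat A$-actions; the anchor is set to encode $\nabla^E$ on fibrewise-linear functions of $E$ via \eqref{AhatE} and $\partial$ on the core via the core anchor. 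These rules extend uniquely by skew-symmetry and the Leibniz rule over $C^\infty(E)$, and by construction they satisfy conditions (i)--(iii) of Definition \ref{def:vba}. The remaining content is the Jacobi identity, which on generating triples reduces term by term to the axioms used above: the triple $(h(a),h(b),h(c))$ reproduces $\nabla\omega=0$, the triples with one core or one vertical entry reproduce $\omega\circ\partial=\omega_C$ and $\partial\circ\omega=\omega_E$, and the triples forcing compatibility of anchor and bracket reproduce \eqref{def:ruh1}.

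The main obstacle, and the step deserving the most care, is the interaction of the bracket with the $C^\infty(E)$-module structure in the reverse direction: unlike an ordinary Lie algebroid over $M$, the Leibniz rule here involves functions on $E$, and in particular fibrewise-linear functions (sections of $E^*$), so one must verify that the anchor built from $\nabla^E$ through \eqref{AhatE} turns the bracket into a genuine derivation rather than merely a $C^\infty(M)$-bilinear operation. Once well-definedness and the Jacobi identity are settled, the $1:1$ statement is immediate: a \vba bracket is determined by its values on linear and core sections, those values are recovered from $(\nabla^E,\nabla^C,\omega)$ by \eqref{curvaturehorlift}--\eqref{AconnE}, and the two assignments are visibly inverse to one another.
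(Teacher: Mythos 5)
The paper does not actually prove this statement itself---it is imported from \cite{GM08}---but your outline is the standard argument and matches exactly the machinery the paper assembles around the statement: the generation of $\Gamma(E,D)$ over $C^{\infty}(E)$ by linear and core sections, the extension \eqref{eq:exactVB} with its $\hat{A}$-representations \eqref{AhatC} and \eqref{AhatE}, and the formulas \eqref{curvaturehorlift}--\eqref{AconnE}. Your reduction of the Jacobi identity on generating triples to the four axioms of Definition \ref{def:ruh}, and your flagging of the Leibniz rule against fibrewise-linear functions as the one point needing genuine care in the reverse direction, are both correct, so the proposal is a sound sketch of the proof the paper delegates to its reference.
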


The notion of morphism between representations up to homotopy can be found in \cite{GM08}.
As we already observed, given a \vba $(D;A,E;M)$ with core bundle $C$, then the choice of a horizontal lift $h:A\to \hat{A}$ determines a 2-term representation up to homotopy on $C\oplus E$. Moreover, isomorphism classes of \vba structures on the DVB $(D;A,E;M)$ with core $C$, are in one to one correspondence with isomorphism classes of representation up to homotopy of $A$ on $C\oplus E$ (see e.g. \cite{GM08}). Indeed, this correspondence is given by an equivalence of categories \cite{DJO}.

\subsection{\texorpdfstring{\vbgs}{VB-groupoids} vs \texorpdfstring{\vbas}{VB-algebroids}}
As explained in \cite{mackenzie-book}, given a $\mathcal{VB}$-groupoid $(\HH;\G,E;M)$,
the application of the Lie functor to each of the groupoids $\HH\rightrightarrows E$ and $\G\rightrightarrows M$,
yields a $\mathcal{VB}$-algebroid $(A\HH;A\G,E;M)$.
In this case, we say that the $\mathcal{VB}$-groupoid $(\HH;\G,E;M)$ integrates the $\mathcal{VB}$-algebroid $(A\HH;A\G,E;M)$.

\begin{definition}\em
A $\mathcal{VB}$-algebroid $(D;A,E;M)$ is called \textbf{integrable} if there exists a $\mathcal{VB}$-groupoid $(\HH;\G,E;M)$ such that $(A\HH;A\G,E;M)$ is isomorphic to $(D;A,E;M)$.
\end{definition}

The following result can be found in \cite{BCdH}.

\begin{theorem}{\cite{BCdH}}\label{thm:DintegGinteg}
Let $(D;A,E;M)$ be a $\mathcal{VB}$-algebroid. Assume that $D\to E$ integrates to a source simply connected Lie groupoid $\G(D)\rightrightarrows E$. Then, the Lie algebroid $A\to M$ is integrable, and $(\G(D);\G(A),E;M)$ is a $\mathcal{VB}$-groupoid integrating $(D;A,E;M)$, where $\G(A)$ is the source simply connected integration of $A$.
\end{theorem}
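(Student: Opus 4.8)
The plan is to exploit the characterization of a \vba as an integrable Lie algebroid carrying a compatible homogeneity structure, and then to integrate that extra structure functorially. First I would repackage the infinitesimal data: the horizontal vector bundle $p:D\to A$ equips $D$ with a one-parameter family of fibrewise scalar multiplications $h_\lambda:D\to D$, $\lambda\in\R$, each covering the scalar multiplication $m_\lambda:E\to E$ on the side $E$. The \vba bracket conditions (i)--(iii) of Definition \ref{def:vba} are precisely equivalent to the statement that every $h_\lambda$ is a morphism of Lie algebroids $(D\to E)\to(D\to E)$ over $m_\lambda$; moreover $h_1=\id$, $h_\lambda\circ h_\mu=h_{\lambda\mu}$, the family depends smoothly on $\lambda$ down to $\lambda=0$, and $h_0$ is the idempotent projecting $D$ onto $0^D_A(A)\cong A$. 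Thus a \vba amounts to the Lie algebroid $D\to E$ together with this homogeneity structure by algebroid endomorphisms.

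Next, since $\G(D)\tto E$ is by hypothesis the source-simply-connected integration of $D\to E$, Lie's second theorem for algebroid morphisms lifts each $h_\lambda$ to a unique Lie groupoid morphism $H_\lambda:\G(D)\to\G(D)$ covering $m_\lambda:E\to E$. Uniqueness propagates the relations $H_1=\id$ and $H_\lambda\circ H_\mu=H_{\lambda\mu}$, and integrating the family simultaneously (treating $\lambda$ as a smooth parameter) yields smoothness of $(\lambda,d)\mapsto H_\lambda(d)$ including at $\lambda=0$. By the Grabowski--Rotkiewicz correspondence, this regular homogeneity structure makes the manifold $\G(D)$ a vector bundle over $\mathcal{B}:=\im H_0$, with the $H_\lambda$ as its scalar multiplications. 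Because $h_0$ is a constant-rank idempotent, $H_0$ has constant rank and $\mathcal{B}$ is an embedded Lie subgroupoid of $\G(D)\tto E$ over $0^E(M)\cong M$, whose Lie algebroid is $\im h_0=0^D_A(A)\cong A$.

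The first assertion then follows by a homotopy argument: the homogeneity $\{H_\lambda\}_{\lambda\in[0,1]}$ deformation-retracts each source fibre of $\G(D)$ onto the corresponding source fibre of $\mathcal{B}$, so the latter is simply connected. Hence $A$ is integrable and $\mathcal{B}=\G(A)$ is its source-simply-connected integration. Finally I would assemble the square $(\G(D);\G(A),E;M)$ from the groupoid $\G(D)\tto E$, the groupoid $\G(A)\tto M$, and the vector bundles $\G(D)\to\G(A)$ and $E\to M$. Since the $H_\lambda$ are automorphisms of $\G(D)\tto E$, the scalar multiplications and addition of $\G(D)\to\G(A)$ commute with source, target, unit, inversion and multiplication; equivalently, every structure map of $\G(D)\tto E$ is a vector bundle morphism over the corresponding one of $\G(A)\tto M$, which is exactly the \vbg condition. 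Applying the Lie functor, $\G(D)\tto E$ recovers $D\to E$, $\G(A)\tto M$ recovers $A\to M$, and $\{H_\lambda\}$ recovers $\{h_\lambda\}$, so this \vbg integrates $(D;A,E;M)$.

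I expect the main obstacle to be the first assertion, namely establishing that $A$ is integrable at all and that $\mathcal{B}$ is its source-simply-connected integration. A priori nothing guarantees integrability of the base $A$ (a Lie subalgebroid of an integrable algebroid need not integrate), and the whole point is that the retraction $H_0$ of the source-simply-connected $\G(D)$ produces one for free. Making the deformation retraction of source fibres precise, together with checking that the integrated homogeneity structure is genuinely regular and that $H_0$ is of constant rank with embedded image, is the delicate part of the argument.
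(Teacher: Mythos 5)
This theorem is quoted in the paper from \cite{BCdH} without proof, and your argument --- recasting the \vba as the Lie algebroid $D\to E$ with a regular homogeneity structure $\{h_\lambda\}$ acting by algebroid morphisms, integrating it to $\{H_\lambda\}$ on $\G(D)$ via Lie's second theorem, and reading off the vector bundle structure and the source-simply-connected subgroupoid $\im H_0$ integrating $A$ --- is precisely the strategy of the proof in that reference, including the delicate points you flag (smoothness and regularity of the integrated action at $\lambda=0$). The only small imprecision is that $H_\lambda$ covers $m_\lambda$ on $E$, so it preserves only the source fibres over the zero section $0^E(M)$; but those are exactly the fibres whose deformation retraction onto the fibres of $\im H_0$ is needed to conclude that $\im H_0=\G(A)$.
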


Theorem \ref{thm:DintegGinteg} says essentially that whenever $D$ is integrable, the \vbg structure on $\G(D)$ comes for free. Note however that it says nothing about how to decide whether a given \vba algebroid is integrable or not. On that matter, the following was proved in \cite{BCO}.

\begin{theorem}{\cite{BCO}}\label{thm:intiff} Let $(D,A,E,M)$ be a \vba.
 Then, $D$ is integrable if and only if A is integrable and $\Mon(D)\cap \ker p=\{0^D_E\}$. Here  $\mathcal{N}(D)\subset D$ denotes the monodromy groups of $D$, $p:D\to A$ the \vba projection, and $0^D_E$ the zero section of $D\to E$.
\end{theorem}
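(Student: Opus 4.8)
The plan is to trade the integrability of the \vba $D$ for the integrability of the single Lie algebroid $D\to E$, and then to control the monodromy of $D\to E$ through the fibration $p\colon D\to A$ furnished by Proposition \ref{prop:vbafibration}. First I would record the reduction. By definition $D$ is integrable as a \vba exactly when it arises as $A\HH$ for some \vbg $(\HH;\G,E;M)$; then $\HH\tto E$ integrates $D\to E$, so integrability of $D$ forces integrability of the Lie algebroid $D\to E$. Conversely, if $D\to E$ is integrable its source simply connected integration $\G(D)\tto E$ exists, and Theorem \ref{thm:DintegGinteg} upgrades it for free to a \vbg $(\G(D);\G(A),E;M)$ integrating $D$, simultaneously showing $A$ is integrable. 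Hence $D$ is integrable as a \vba if and only if $D\to E$ is integrable as a Lie algebroid, and in that case $A$ is automatically integrable. It therefore suffices to characterize integrability of $D\to E$ by the monodromy criterion of Crainic and Fernandes: $D\to E$ is integrable if and only if the monodromy groups $\Mon(D,e)\subset Z(\g_e(D))$ are locally uniformly discrete as $e$ ranges over $E$, i.e. closed, discrete, and bounded away from $0^D_E$ locally uniformly.

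Next I would split the monodromy using the fibration. Feeding $p\colon D\to A$ into Theorem \ref{prop:longexactfibrationvbacase} gives, for $e\in E$ over $m=\pE(e)$, an exact sequence of monodromy groups in which $p$ maps $\Mon(D,e)$ onto $\Mon(A,m)$ with kernel the image of the transgression $\delta_2$, the latter contained in $\ker p$. Exactness then reads $\Mon(D,e)\cap\ker p=\im(\delta_2,e)$, so the condition in the statement is precisely the vanishing of the vertical monodromy. The crux is a homogeneity observation: the linear structure of $D\to A$ provides, for each $\lambda$, a fibrewise scaling $h_\lambda\colon D\to D$ which is a morphism of the Lie algebroid $D\to E$ covering $e\mapsto\lambda e$ on $E$, satisfies $p\circ h_\lambda=p$, and restricts to genuine scalar multiplication on $\ker p$. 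Since $h_\lambda$ is an isomorphism for $\lambda\neq0$ and monodromy is natural, $h_\lambda\bigl(\Mon(D,e)\bigr)=\Mon(D,\lambda e)$. Thus a nonzero vertical class $v\in\Mon(D,e)\cap\ker p$ produces the family $\lambda v\in\Mon(D,\lambda e)$, a sequence of nonzero monodromy vectors converging to $0^D_E$ over $0^E_m$ as $\lambda\to0$; this contradicts local uniform discreteness near the zero section. Hence integrability of $D\to E$ forces $\Mon(D)\cap\ker p=\{0^D_E\}$.

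For the converse I would assume $A$ integrable and $\Mon(D)\cap\ker p=\{0^D_E\}$ and transfer uniform discreteness along $p$. By the exact sequence, $p$ restricts to a fibrewise isomorphism $\Mon(D,e)\xrightarrow{\sim}\Mon(A,m)$, and $p\circ h_\lambda=p$ shows that over any $\lambda e$ each nonzero monodromy vector still projects to the same nonzero element of $\Mon(A,m)$. Fixing a metric on $D$ with $\|w\|_D\geq\|p(w)\|_A$, a nonzero $w\in\Mon(D,e')$ has $p(w)\neq0$ (otherwise $w\in\ker p$ forces $w=0^D_E$), so $\|w\|_D\geq\|p(w)\|_A$ is bounded below by the local uniform discreteness of $\Mon(A)$ coming from integrability of $A$. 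Thus $\Mon(D)$ is locally uniformly discrete, $D\to E$ is integrable, and by the first step $D$ is integrable as a \vba. Combining the two implications yields the stated equivalence.

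The main obstacle is the homogeneity step of the second paragraph: one must check with care that the vector-bundle scalings of $D\to A$ act by Lie algebroid automorphisms of $D\to E$ (this is exactly the compatibility built into Definition \ref{def:vba}) and that they rescale the transgression $\delta_2$ linearly, so that the identification $\Mon(D,e)\cap\ker p=\im(\delta_2,e)$ is genuinely homogeneous. Equally delicate is the point that local uniform discreteness of $\Mon(D)$ may be tested near the zero section of $E$, so that a single nonzero vertical monodromy vector, scaled down towards $0^E_m$, really does obstruct integrability; the bookkeeping between the monodromy of $D\to E$ and the fibration data of Theorem \ref{prop:longexactfibrationvbacase} is where I expect the analytic work to concentrate.
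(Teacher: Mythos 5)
Your argument is correct and follows essentially the same route as the paper's own proof of the equivalent statement, Theorem \ref{thm-integrability:integral}: the exact sequence of monodromy groups from Theorem \ref{prop:longexactfibrationvbacase} identifies $\Mon(D,e)\cap\ker p$ with $\im(\delta_2,e)$, and a nonzero vertical monodromy element is scaled down towards the zero section (your homothety $h_\lambda$ plays the role of the paper's appeal to linearity of $\hol(\sigma_0)$ in $e$, producing the sequence $\xi_n$) to violate the Crainic--Fernandes discreteness criterion. You make explicit the reduction to the integrability of $D\to E$ via Theorem \ref{thm:DintegGinteg} and the converse transfer of local uniform discreteness along $p$, steps the paper compresses into ``it is clear from the exact sequence,'' but the underlying ideas coincide.
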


Note that both results are only concerned about \vbas and their integration to \vbgs. If one is rather interested in $2$-term representations up to homotopy, one may wonder if it is possible to integrate a $2$-term representation up to homotopy of a Lie algebroid to a $2$-term representation up to homotopy of a Lie groupoid in a canonical way (meaning, freely of choices). Surprisingly, the answer is \emph{no} (see Remark \ref{rem:Rajehorizontalliftexample}) which was one of the motivations for this work. We will be back to this later.

\section{\texorpdfstring{\vbas}{VB-algebroids} as Lie algebroid fibrations}

\subsection{Generalities on Lie algebroid fibrations}
We now briefly recall the notions of extension and fibration for Lie algebroids  \cite{Br, BZ}.

\begin{definition}
 A Lie algebroid {\bf extension} is a surjective Lie algebroid morphism $\SelectTips{cm}{}\xymatrix@C=15pt{p:A_E  \ar@{->>}[r] & A_{}}$ covering a surjective submersion $p_E:E \to M$.
\end{definition}

The \textbf{kernel} of an extension $\SelectTips{cm}{}\xymatrix@C=15pt{p:A_E  \ar@{->>}[r] & A},$ is defined by $\K:=\ker p$. One can show \cite{Br} that $\K$ is a Lie subalgebroid of $A_E$ whose orbits are included in the fibers of the base submersion $p_E:E\to M$. Let us stress the fact that $\K$ coincides with the kernel of $p$ as a vector bundle map, in particular $\K$ is a Lie algebroid over $E$. Therefore, taking into account base manifolds, we obtain a short exact sequence of Lie algebroids
\begin{gather}\label{eq:extension}
\SelectTips{cm}{}\xymatrix{ \K_{} \ \ar@{^{(}->}@<-1pt>[r]^{i} \ar[d]&A_E  \ar@{->>}[r]<-1pt>^{p} \ar[d]& A_{}\ar@{->}[d] \\
             E \ar@{->}[r]^{\id}&E  \ar@{->}[r]^{p_E}              & M.}
\end{gather}
\begin{definition}
Given a Lie algebroid extension, an {\bf Ehresmann connection} is given by a vector subbundle $H\to E$ complementary to $\K$ in $A_E$, that is $\K\oplus H=A_E$.  
\end{definition}

As easily seen, a connection is characterized by a $C^\infty(M)$-linear map $\hor:\Gamma(A)\to \Gamma(H)$ referred to as the \textbf{horizontal lifting}. This determines a $C^{\infty}(M)$-linear map $\mathcal{D}:\Gamma(A)\to \Der(\K)$, called \textbf{covariant derivative}, and defined by $\mathcal{D}_a:=[\hor(a),\cdot ]_{A_E}$. Then the {\bf curvature} of the connection is the element $\omk\in \Omega^2(A, \Gamma(\K))$, defined by $\omk(a,b)=\hor([a ,b ]_A)-[\hor(a),\hor(b) ]_{\scriptscriptstyle{{A_E}}}.$

The couple $(\mathcal{D},\omk)$ entirely determines the extension and, as a consequence of the Jacobi identity on $A_E$, it satisfies the following conditions:
\begin{gather}
\mathcal{D}_{[a,b]_{A}}-[\mathcal{D}_a,\mathcal{D}_b]=[\omk(a,b), \ ]_{\scriptscriptstyle{\K}},\\
\oint_{a,b,c}\mathcal{D}_a\,\omk(b,c)+\omk([a,b]_A,c) = 0.
\end{gather}

\begin{definition}
An Ehresmann connection is  {\bf complete} if for any section $a\in\Gamma(A)$ such that $\rho_A(a)$ is a complete vector field, then $\rho_{A_E}(\hor(a))$ is a complete vector field as well. 
\end{definition}

For a complete connection, the notion of parallel transport along $A$-paths is well defined \cite{Br}.

\begin{definition}
Let $A\to M$ be a Lie algebroid. An {\bf $A$-path} is a Lie algebroid morphism $adt:TI\to A$, where $a:I\to A$ and $I=[0,1]$.
\end{definition}

Given an $A$-path $a:I\to A$ between $m_0,m_1\in M$, the corresponding holonomy is a Lie algebroid morphism $\hol(a):\K|_{E_{m_0}}\!\to \K|_{E_{m_1}},$ where $E_{m_i}:=p^{-1}_E(m_i)$ denotes the fiber over $m_i$. The presence of a complete connection also allows one to lift $A$-paths to \emph{horizontal} $A_E$ -paths. In \cite{BZ}, this lifting path property motivated the following definition.
\begin{definition}
A Lie algebroid {\bf fibration} is an extension $\SelectTips{cm}{}\xymatrix@C=15pt{p:A_E  \ar@{->>}[r] & A_{}}$ that admits a complete Ehresmann connection.
\end{definition}

 The fundamental groups $\pi_\bullet(A)$ of a Lie algebroid are obtained by taking $A$-spheres up to homotopy, where both $A$-spheres and homotopies are defined as Lie algebroid maps $TI^n\to A$ with certain boundary conditions (see the Section \ref{sec:Weinsteintwogroupoid} ). Recall that in general, $\pi_n(A)$ is a bundle of abelian groups over $M$. The main result of \cite{BZ} establishes a homotopy long exact sequence for Lie algebroid fibrations as follows.

\begin{theorem}[{\cite{BZ}}]\label{thm:longexactfibration}
Given a Lie algebroid fibration $\SelectTips{cm}{}\xymatrix@C=15pt{p:A_E  \ar@{->>}[r] & A_{}}$ with kernel $\K$, there are transgression maps $\delta_n:\pE^*\pi_{n}(A)\to \pi_{n-1}(\K)$ that fit into a homotopy long exact sequence of groupoids:
\begin{align*} \cdots \xrightarrow{}  \pi_{n}(\K) \xrightarrow{i} \pi_{n}(A_E) \xrightarrow{p}\pE^*\pi_{n}(A) &\xrightarrow{\delta_{n}}\pi_{n-1}(\K)\xrightarrow{} \cdots \\
   \cdots \xrightarrow{} \pE^*\pi_{2}(A)&\xrightarrow{\delta_2  } \pi_1(\K) \xrightarrow{i} \pi_1(A_E) \stackrel{p}{\twoheadlongrightarrow} \pi_1(A).
\end{align*}
\end{theorem}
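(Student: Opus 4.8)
The plan is to reproduce, in the Lie algebroid category, the classical proof of the long exact homotopy sequence of a Serre fibration, with the complete Ehresmann connection playing the role of the homotopy lifting property. Recall that an element of $\pi_n(A)$ is represented by a Lie algebroid morphism $\Phi\colon TI^n\to A$ satisfying the boundary conditions that pin down an $A$-$n$-sphere based at a point $m\in M$, and that two such represent the same class when joined by a Lie algebroid morphism $TI^{n+1}\to A$ of the corresponding type. The horizontal lifting $\hor\colon\Gamma(A)\to\Gamma(H)$, with $\K\oplus H=A_E$, together with the covariant derivative $\mathcal{D}$ and curvature $\omk$, provides the mechanism to lift such morphisms into $A_E$.

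First I would establish the key technical input, a homotopy lifting property. Given a Lie algebroid morphism $\Phi\colon TI^n\to A$ and a horizontal lift of its restriction to one face, one extends it to a horizontal lift $\tilde\Phi\colon TI^n\to A_E$ by parallel transport along the remaining cube directions. Concretely, lifting amounts to solving the non-autonomous ordinary differential equation governing parallel transport in the fibres of $\pE$, and the completeness of the connection is exactly what guarantees that these flows exist over the whole parameter cube rather than only for short time. Smooth dependence on the cube parameters then upgrades the pointwise lift to a genuine Lie algebroid morphism, whose failure to be horizontal is controlled by $\mathcal{D}$ and $\omk$.

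With lifting in hand, I would define the transgression $\delta_n\colon \pE^*\pi_n(A)\to\pi_{n-1}(\K)$ as follows: represent a class by an $A$-$n$-sphere $\Phi$, lift it horizontally to $\tilde\Phi\colon TI^n\to A_E$ based over a chosen point of $E$; the boundary conditions force $\tilde\Phi$ to restrict, on the relevant face, to a morphism valued in $\ker p=\K$, and its class in $\pi_{n-1}(\K)$ is declared to be $\delta_n[\Phi]$. Well-definedness on homotopy classes follows by applying the lifting property to a homotopy between representatives. Exactness is then checked at the three types of positions, in each case by the lifting property: at $\pi_n(A_E)$ a class dies in $\pE^*\pi_n(A)$ iff its projection is null-homotopic, and lifting the null-homotopy deforms the class into $\K$, giving $\ker p=\im i$; at $\pE^*\pi_n(A)$ one has $\delta_n[\Phi]=0$ iff the boundary $\K$-sphere bounds, in which case the horizontal lift is corrected to an honest $A_E$-sphere over $\Phi$, giving $\ker \delta_n=\im p$; and at $\pi_{n-1}(\K)$ a $\K$-class becomes trivial in $\pi_{n-1}(A_E)$ iff a null-homotopy in $A_E$ exists whose projection is an $A$-sphere transgressing to it, giving $\ker i=\im\delta_n$. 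The surjectivity of $\pi_1(A_E)\twoheadlongrightarrow\pi_1(A)$ at the tail is the $n=1$ path-lifting statement.

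The main obstacle will be the homotopy lifting property itself. Unlike the topological setting, a lift must be a morphism of Lie algebroids, so it has to respect both the anchor and the bracket; producing it is not a set-theoretic choice but the integration of the parallel transport equation, and the delicate points are global existence of the flows (where completeness enters decisively), their smooth and bracket-compatible dependence on all cube parameters simultaneously, and the verification that the prescribed boundary and base-point conditions are preserved under lifting. Once this is secured, the three exactness statements follow by the same formal pattern as for ordinary fibrations.
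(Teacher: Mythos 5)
This theorem is not proved in the paper at all: it is imported verbatim from \cite{BZ}, so there is no internal proof to compare against. Measured against the argument actually given in \cite{BZ}, your proposal follows essentially the same route: the complete Ehresmann connection supplies the path- and homotopy-lifting property, the transgression $\delta_n$ is read off from a boundary face of a lift, the pull-back $\pE^*\pi_n(A)$ records the necessary choice of base point in the fibre of $E$, and exactness is verified by the usual three-position Serre-fibration bookkeeping. One point in your write-up needs correcting, because read literally it would make the transgression vanish identically: you describe the lift of $\Phi\colon TI^n\to A$ as a \emph{horizontal} lift and then say its ``failure to be horizontal is controlled by $\mathcal{D}$ and $\omk$,'' which is contradictory. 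When $\omk\neq 0$ you cannot have both properties at once: a fully horizontal lift fails to be a Lie algebroid morphism, since $\omk(a,b)=\hor([a,b]_A)-[\hor(a),\hor(b)]_{A_E}$ measures exactly the defect, while the honest Lie algebroid morphism lifting $\Phi$ is horizontal only in the directions you integrate along and acquires a $\K$-valued component in the remaining direction, obtained by integrating the curvature against the lifted faces. It is precisely this vertical defect, restricted to the terminal face, that defines $\delta_n$ and prevents the lift of an $A$-sphere from closing up to an $A_E$-sphere; in the present paper this is what materializes as the explicit holonomy integral \eqref{eq:holonomy-homotopy} and Proposition \ref{prop:transgression}. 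With that clarification, and the standard care about flatness at boundaries so that concatenations and reparametrizations of lifts remain smooth, your outline matches the proof in the cited reference.
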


Here, $\pE^*\pi_{n}(A)$ denotes the pull back bundle of groups along the projection $\pE:E\to M$. This pull back comes into play when lifting $A$-spheres to $A_E$-spheres, since a base point in $E$ needs to be chosen.


\subsection{\texorpdfstring{\vbas}{VB-algebroids} and Lie algebroid extensions}

We now consider a \vba $(D;A,E;M)$ as in \eqref{vbdiagram}. Since the underlying map $p: D\to A$ is a surjective Lie algebroid morphism, covering  a surjective submersion $p_E:E\to M$, we see that a \vba is a special case of a Lie algebroid extension.

\begin{proposition}\label{seq:extension-vba}
Any \vba  $(D;A,E;M)$ with core $C$ defines a Lie algebroid extension $\SelectTips{cm}{}\xymatrix@C=15pt{p:D  \ar@{->>}[r]^{} & A_{}}$
whose kernel $\K\to E$ is canonically identified with $\pE^*C \to E$ as a vector bundle.
\end{proposition}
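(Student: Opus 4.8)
The plan is to prove the two assertions in turn: first that $p:D\to A$ is a surjective Lie algebroid morphism covering the surjective submersion $\pE:E\to M$, so that it is an extension in the sense introduced above, and then to produce a canonical vector bundle isomorphism $\K:=\ker p\cong\pE^{*}C$ over $E$.

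For the extension claim, surjectivity of $p$ is immediate since $p\circ 0^{D}_{A}=\id_{E}$ on each fibre (indeed $p\circ 0^{D}_{A}=\id_{A}$), and $\pE$ is a surjective submersion, being a vector bundle projection. The anchor compatibility $T\pE\circ\rho_{D}=\rho_{A}\circ p$ is precisely the requirement, built into Definition~\ref{def:vba}, that the anchor $\rho_{D}:D\to TE$ be a double vector bundle morphism covering $\rho_{A}:A\to TM$ along $p$ and $T\pE$. It then remains to see that $p$ intertwines brackets. Since $p$ covers the submersion $\pE$, this can be tested on a generating set of sections of $D\to E$, and by hypothesis $\Gamma(E,D)$ is generated as a $C^{\infty}(E)$-module by $\Gamma_{\ell}(E,D)$ and $\Gamma_{c}(E,D)$. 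A linear section $\chi$ covering $a\in\Gamma(A)$ satisfies $p\circ\chi=a\circ\pE$, hence is $p$-related to $a$, while every core section $\hat c$ satisfies $p\circ\hat c=0^{A}\circ\pE$, hence is $p$-related to $0$. The bracket conditions (i)--(iii) of Definition~\ref{def:vba} then assert exactly that $[\chi_{1},\chi_{2}]_{D}$ is linear covering $[a_{1},a_{2}]_{A}$, that $[\chi,\hat c]_{D}$ is core covering $0=[a,0]_{A}$, and that $[\hat c_{1},\hat c_{2}]_{D}=0$ covers $[0,0]_{A}$; so $p$ preserves brackets on generators and is therefore a Lie algebroid morphism.

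For the kernel identification, I would define $\Phi:\pE^{*}C\to D$ fibrewise by sending $(e,c)$ with $e\in E_{m}$ and $c\in C_{m}$ to the core value $\hat c(e)=0^{D}_{E}(e)+_{A}\overline{c}$ of Definition~\ref{def:coresections}. Computing the two projections, the fact that $+_{A}$ preserves $p$-fibres gives $p(\Phi(e,c))=0^{A}_{m}$, so $\Phi$ takes values in $\K$; and the linearity of $p_{D}$ for $+_{A}$ over $+:E\times_{M}E\to E$ gives $p_{D}(\Phi(e,c))=e+0^{E}_{m}=e$, so $\Phi$ covers $\mathrm{id}_{E}$. Linearity of $\Phi$ for the bundle structure $\K\subset(D\to E)$ follows from the interchange law of the double vector bundle combined with the fact that $+_{A}$ and $+_{E}$ restrict to the same addition on the core: namely $(0^{D}_{E}(e)+_{A}\overline{c_{1}})+_{E}(0^{D}_{E}(e)+_{A}\overline{c_{2}})=(0^{D}_{E}(e)+_{E}0^{D}_{E}(e))+_{A}(\overline{c_{1}}+_{E}\overline{c_{2}})=0^{D}_{E}(e)+_{A}\overline{c_{1}+c_{2}}$, and similarly for scalar multiples.

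Finally, $\Phi$ is a fibrewise isomorphism. A rank count gives $\mathrm{rank}(\K\to E)=\mathrm{rank}(D\to E)-\mathrm{rank}(A\to M)=\mathrm{rank}(C\to M)=\mathrm{rank}(\pE^{*}C\to E)$, so it suffices to produce an inverse: for $d\in\K_{e}$ the element $d-_{A}0^{D}_{E}(e)$ has $p=0^{A}_{m}$ and $p_{D}=0^{E}_{m}$, hence lies in the core $C_{m}$ and is sent back to $d$ by $\Phi$. I expect the main obstacle to be this second part, specifically the double vector bundle bookkeeping needed to see $\Phi$ simultaneously as fibre-preserving over $E$ and linear, which is where the interchange law and the coincidence of the two core additions must be used carefully; once that is established the isomorphism follows formally from the rank count. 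By comparison, verifying that $p$ is a Lie algebroid morphism is routine given the bracket conditions, the only subtlety being that $p$-relatedness over a submersion is checked on the generating linear and core sections rather than on arbitrary ones.
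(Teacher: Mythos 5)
Your proof is correct and follows essentially the same route as the paper: the paper's (much terser) argument likewise identifies $C$ with $\K|_{0^E}$ and uses the addition $+_A$ of the bundle $D\to A$ to produce the isomorphism $\K\cong\pE^*C$, which is exactly your map $(e,c)\mapsto 0^D_E(e)+_A\overline{c}$. You additionally verify in detail that $p$ is a surjective Lie algebroid morphism (which the paper asserts without proof in the paragraph preceding the proposition); the only blemish is the harmless slip ``$p\circ 0^D_A=\id_E$'' where $\id_A$ is meant, as your own parenthetical already corrects.
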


\begin{proof}
By the definition of the core (section \ref{sec:vba}) we see that $C$ injects into $\K$ as the restriction to the zero section of $\pE:E\to M$,
namely $C=\K|_{0^E}$. Then one can use the addition $+_A:D\times_A D\to D$ for the vector bundle structure on $D\to A$ to obtain an isomorphism $\K=\pE^*C$. Note that this identification is canonical in the sense that it does not depend on the choice of a splitting of $D$.
\end{proof}

\begin{remark}
Note that the extension of Proposition \ref{seq:extension-vba} is quite different from \eqref{eq:exactVB}. For instance, $D$ is an extension of $A$ with different base spaces \eqref{eq:extension} which will play an important role in this work.
\end{remark}

The above result applies in particular to the differential $d\pE:TE\to TM$ of the projection of an arbitrary vector bundle $p_E:E\to M$. In that case we recover the following well known identification.
\begin{proposition}\label{seq:extension-vba-tangent}
For any vector bundle $\pE:E\to M$, the vertical bundle $\ker d\pE\to E$, where 
$\SelectTips{cm}{}\xymatrix@C=15pt{dp_E: TE  \ar@{->>}[r]& TM_{}}$ denotes the differential of the projection, 
						 identifies with $\pE^* E \to E$ as a vector bundle.
\end{proposition}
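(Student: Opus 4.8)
The plan is to deduce this statement directly from Proposition~\ref{seq:extension-vba} by recognizing $d\pE$ as the extension attached to a tangent \vba. First I would apply the tangent functor to the projection $\pE:E\to M$, producing the double vector bundle $(TE;TM,E;M)$ in which $p=d\pE:TE\to TM$ and $p_D:TE\to E$ is the tangent bundle projection. The essential observation is that, unlike in the tangent algebroid example which requires a Lie algebroid structure on the total space, here one only uses the \emph{canonical} tangent Lie algebroid structures on the manifolds $E$ and $M$: the vertical arrows $TE\to E$ and $TM\to M$ are the tangent algebroids of $E$ and of $M$, their anchors are the respective identity maps, and $d\pE$ is a Lie algebroid morphism because it is $T(\pE)$. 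These data make $(TE;TM,E;M)$ a \vba for an \emph{arbitrary} vector bundle $E$, and $d\pE:TE\to TM$ is precisely its associated Lie algebroid extension in the sense of Proposition~\ref{seq:extension-vba}, covering the surjective submersion $\pE:E\to M$.

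The key step is then to identify the core of this DVB. By definition the core is $C=\ker(d\pE)\cap\ker(p_D)$, i.e. the vertical tangent vectors based at points of the zero section $0^E(M)$. At such a point $0^E_m$ the vertical space $\ker d_{0^E_m}\pE$ is the tangent space at the origin of the vector space $E_m$, which is canonically $E_m$ itself through the vertical lift $f_m\mapsto \frac{d}{dt}\big|_{t=0}(t\,f_m)$. Carrying this out fiberwise over $M$ yields a canonical identification $C\cong E$ of vector bundles over $M$, with no choice of splitting involved.

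With the core identified, Proposition~\ref{seq:extension-vba} applies verbatim: the kernel $\K=\ker(d\pE)$, which is exactly the vertical bundle $\ker d\pE\to E$, is canonically isomorphic to $\pE^*C=\pE^*E$ as a vector bundle over $E$. Concretely the isomorphism is the fiberwise vertical lift $\pE^*E\to\ker d\pE$, $(e_m,f_m)\mapsto \frac{d}{dt}\big|_{t=0}(e_m+t\,f_m)$, which spreads the core from the zero section to all of $E$ by means of the addition $+_A$ on $TE\to TM$, exactly as in the proof of Proposition~\ref{seq:extension-vba}. The only point genuinely requiring care is the identification of the core as $E$ together with the compatibility of the vertical lift with both vector bundle structures on $TE$; since this is the standard description of the core of a tangent double vector bundle, the whole argument reduces to a short transcription of the general statement, and I would present it simply as a corollary of Proposition~\ref{seq:extension-vba}.
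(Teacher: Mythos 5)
Your proposal is correct and follows exactly the route the paper intends: Proposition \ref{seq:extension-vba-tangent} is stated there as an immediate specialization of Proposition \ref{seq:extension-vba} to the tangent double vector bundle $(TE;TM,E;M)$ with its canonical tangent Lie algebroid structures, whose core is $E$ via the vertical lift along the zero section. The extra details you supply (checking the \vba axioms and writing the isomorphism $(e_m,f_m)\mapsto \frac{d}{dt}\big|_{t=0}(e_m+t\,f_m)$ explicitly) are consistent with, and merely flesh out, the paper's one-line argument.
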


\subsection{The kernel of a \vba}
The kernel of an extension induced by a \vba enjoys interesting properties. In order to explain this, we denote by $\Vec_M$ the category of real smooth vector bundles over a smooth manifold $M$, where morphisms are smooth vector bundle maps covering the identity of $M$. In the following, we adapt the discussion from \cite{BaezCrans6}.

\begin{definition}
A \textbf{$2$-vector bundle} over $M$ is a category internal to $\Vec_M$.
\end{definition}

Namely, a $2$-vector bundle is a category $E_1\tto E_0$ where both the space of objects $E_0$ and of arrows $E_1$ are vector bundles over $M$, and where all structure maps (source, target, unit and composition) are vector bundle morphisms covering the identity. Morphisms between $2$-vector bundles over $M$ are \textbf{linear functors}, that is, functors which are vector bundle maps both at the level of objects and of arrows. In this way, we obtain a category $\tVec_M$ of $2$-vector bundles over $M$. We also denote by $\tTerm_M$ the category of $2$-term complexes of vector bundles over $M$.
 
\begin{proposition}\label{Dold-Kan}
There is an equivalence of categories $\mathrm{DK}:\tVec_M \to\tTerm_M.$ 
\end{proposition}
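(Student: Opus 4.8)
The plan is to establish the equivalence $\mathrm{DK}:\tVec_M\to\tTerm_M$ by constructing functors in both directions and exhibiting natural isomorphisms between their composites and the identities. This is a fiberwise/$C^\infty(M)$-linear version of the classical Dold--Kan correspondence relating internal categories in an abelian category with $2$-term chain complexes, so the main task is to check that the usual constructions respect the vector bundle structure over $M$ and covering the identity.

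First I would define the functor $\mathrm{DK}$ on objects. Given a $2$-vector bundle $E_1\tto E_0$ with source $\s$, target $\t$, and unit $u:E_0\to E_1$, I would set $C:=\ker\s\to M$ (a vector subbundle of $E_1$, since $\s$ is a vector bundle morphism covering the identity, so its kernel has constant rank fiberwise once one notes $\s$ is split by $u$), take $E:=E_0$, and define the boundary map $\partial:C\to E$ as the restriction of the target, $\partial:=\t|_{\ker\s}$. The splitting $u$ of $\s$ gives $E_1\cong E_0\oplus C$ as vector bundles over $M$, which is what lets kernels and images be honest subbundles. On morphisms, a linear functor $F=(F_1,F_0)$ restricts to a chain map: $F_1$ preserves $\ker\s$ because it commutes with source maps, yielding a map $C\to C'$, and $F_0:E\to E'$ is the degree-zero component; compatibility $\partial'\circ F_1|_C=F_0\circ\partial$ follows from $F$ commuting with targets. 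In the reverse direction, $\mathrm{DK}^{-1}$ sends a $2$-term complex $(\partial:C\to E)$ to the internal category with objects $E_0:=E$, arrows $E_1:=E\oplus C$, source $\s(e,c)=e$, target $\t(e,c)=e+\partial c$, unit $u(e)=(e,0)$, and the unique composition forced by these (composable pairs share the appropriate source/target, and one adds the core components). I would verify this is genuinely a category internal to $\Vec_M$: all structure maps are manifestly vector bundle morphisms over $\id_M$, associativity and unitality of composition reduce to associativity of addition in the fibers of $C$.

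Next I would check functoriality of both assignments and then produce the natural isomorphisms $\mathrm{DK}\circ\mathrm{DK}^{-1}\cong\id_{\tTerm_M}$ and $\mathrm{DK}^{-1}\circ\mathrm{DK}\cong\id_{\tVec_M}$. The first composite returns $(\ker\s\to E)$ for the constructed category, and $\ker\s=\{(e,c):e=0\}\cong C$ with $\partial$ recovered as $\t|_{\ker\s}(0,c)=\partial c$, so it is literally the identity on objects and one checks it is natural in chain maps. The second composite is where the only genuine content lies: starting from $E_1\tto E_0$, the reconstruction gives $E_0\oplus\ker\s\tto E_0$, and the comparison isomorphism $E_0\oplus\ker\s\xrightarrow{\sim}E_1$ sends $(e,c)\mapsto u(e)+c$; I would verify this is a vector bundle isomorphism over $M$ intertwining all structure maps (source, target, unit, composition) using the source-splitting $u$ and the fact that every arrow decomposes uniquely as $u(\s(x))+(x-u(\s(x)))$ with the second summand in $\ker\s$.

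The main obstacle I anticipate is not any single hard computation but rather the bookkeeping needed to confirm that composition in an internal category in $\Vec_M$ is completely determined by the source, target, and unit data, and that the reconstruction's composition matches it under the comparison map. Concretely, one must show an arrow in $E_1$ is uniquely determined by its source together with its ``core part'' $x-u(\s(x))\in\ker\s$, and that the internal composition, being a vector bundle map on $E_1\times_{E_0}E_1$ respecting units, is forced to add core parts; this is the fiberwise manifestation of the fact that the automorphisms fixing objects in a linear category form a group that is abelian up to the constraints imposed by $\partial$. Everything else (subbundle-ness of kernels, constancy of rank, smoothness of all maps) follows formally from the hypotheses that source, target, unit, and composition are vector bundle morphisms covering $\id_M$, so I would treat those points briefly and concentrate the verification on the structure-preservation of the comparison isomorphism.
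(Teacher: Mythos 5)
Your proposal follows essentially the same route as the paper's proof: both directions of the correspondence are defined identically ($C:=\ker\s$ with $\partial:=\t|_C$ one way, and $C\oplus E\tto E$ with $\t(c,e)=e+\partial c$ and addition of core components the other way), the paper merely sketching these constructions and deferring the verifications to Baez--Crans. Your additional observations --- that the unit splits the source so kernels are honest subbundles, and that linearity plus unitality forces the composition $m(g,f)=g+f-u(y)$, i.e.\ addition of core parts --- are correct and simply fill in the details the paper omits.
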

\begin{proof}
As explained in \cite{BaezCrans6}, this equivalence is an instance of the Dold-Kan correspondence. We only sketch the proof and refer to \cite{BaezCrans6} for more details. To any $2$-vector bundle
$E_1\tto E_0$, one can associate a $2$-term complex of vector bundles over $M$ given by $\E:=(\d:C\xrightarrow{} E_0)$, where $C:=\mathrm{Ker}(\s)$ and $\partial:=\t|_C$. Conversely, given a $2$-term complex $\E=(\d:C\to E)$ of vector bundles over $M$, we associate a $2$-vector bundle $
 C\oplus E\tto E$, where an arrow $(c,e)$ has source $\s(c,e)=e$ and target $\t(c,e)=e+\dC c$. The multiplication of composable arrows is given by $(c_2,e+\dC c_1 )\cdot(e,c_1)=(c_1+c_2,e)$,
while units are given by ${\bf 1}_e=(0,e)$. Both correspondences are functorial and induce an equivalence of categories (see  \cite{BaezCrans6}).
\end{proof}

As a consequence of Proposition \ref{Dold-Kan} we conclude that a $2$-vector bundle is necessarily a Lie groupoid. The inversion map $C\oplus E\to C\oplus E$ is defined by $(c,e)^{\bf -1}=(-c,e+\dC c)$

The next result gives a first hint of the relevance of treating a \vba as a Lie algebroid extension with different bases as in \eqref{eq:extension} rather than looking at the corresponding jet Lie algebroid \eqref{eq:exactVB}.

\begin{proposition}\label{prop:int:kernel}
Let $(D;A,E;M)$ be a \vba. The kernel $\K$ of the induced extension integrates to a $2$-vector bundle
whose associated $2$-term complex is $\E=(\d:C\to E)$.
\end{proposition}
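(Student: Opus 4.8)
The plan is to pin down the Lie algebroid structure carried by $\K$ over $E$ and then recognise it as the Lie algebroid of the $2$-vector bundle attached to $\E$ by the Dold--Kan correspondence of Proposition \ref{Dold-Kan}. By Proposition \ref{seq:extension-vba} the kernel is canonically $\K\cong\pE^*C$ as a vector bundle over $E$, and as the kernel of the extension $p:D\twoheadrightarrow A$ it is a Lie subalgebroid of $D$, so its anchor and bracket are the restrictions of those of $D$. First I would compute these two pieces of data explicitly. Since the orbits of $\K$ lie in the fibres of $\pE$, its anchor factors through the vertical bundle $\ker d\pE$, which is $\pE^*E$ by Proposition \ref{seq:extension-vba-tangent}; tracking a core section $\hat c$ through the double vector bundle morphism $\rho_D:D\to TE$, which covers $\rho_A$ and maps the core $C$ of $D$ by the core anchor $\partial$, shows that $\rho_D(\hat c(e))$ is the constant vertical vector field with value $\partial c$, whence $\rho_\K=\pE^*\partial$. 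For the bracket, note that the core sections $\hat c$ with $c\in\Gamma(C)$ are exactly the constant (pullback) sections of $\pE^*C$ and generate $\Gamma(E,\K)$ as a $C^\infty(E)$-module; axiom (iii) of Definition \ref{def:vba} gives $[\hat c_1,\hat c_2]_\K=0$, and the Leibniz rule with the anchor $\pE^*\partial$ then determines the bracket on all of $\Gamma(E,\K)$.

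Next I would exhibit the integrating object. By Proposition \ref{Dold-Kan} the $2$-term complex $\E=(\partial:C\to E)$ corresponds to the $2$-vector bundle $C\oplus E\tto E$, a Lie groupoid whose fibre over $m\in M$ is the action groupoid $C_m\ltimes E_m\tto E_m$ of the abelian group $(C_m,+)$ acting on $E_m$ by the translation $c\cdot e=e+\partial c$. Differentiating this groupoid is straightforward: the units form the zero section $\cong E$, the kernel of $d\s$ along a unit $(0,e)$ with $e\in E_m$ is $C_m\cong(\pE^*C)|_e$, and $d\t$ carries $c\in C_m$ to the vertical vector $\partial c\in T_eE$. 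Hence the Lie algebroid of $C\oplus E\tto E$ has underlying bundle $\pE^*C$, anchor $\pE^*\partial$, and a bracket vanishing on constant sections, i.e. it coincides with $\K$ as computed above. Because the source fibres $C_m$ are vector spaces, hence simply connected, $C\oplus E\tto E$ is in fact the source simply connected integration of $\K$; in particular $\K$ is integrable and integrates to a $2$-vector bundle.

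Finally, the associated $2$-term complex of this $2$-vector bundle is read off directly from the Dold--Kan recipe of Proposition \ref{Dold-Kan}: for $C\oplus E\tto E$ one has $\ker\s=C$ and $\t|_C=\partial$, so the complex is precisely $\E=(\partial:C\to E)$, as required. The one genuinely delicate step will be the anchor identification $\rho_\K=\pE^*\partial$: this is the point where the double vector bundle structure really enters, and it requires care in following a core section of $D$ through the morphism $\rho_D$ and in identifying its image with a core (constant vertical) section of $TE$. Everything downstream---the bracket comparison and the differentiation of $C\oplus E\tto E$---is then routine.
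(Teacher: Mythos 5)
Your proposal is correct and follows essentially the same route as the paper's own proof: identify $\K\cong\pE^*C$ via Proposition \ref{seq:extension-vba}, use the fact that core sections span $\Gamma(\K)$ as a $C^\infty(E)$-module together with axiom (iii) of Definition \ref{def:vba} to pin down the anchor $(c,e)\mapsto(\partial c,e)$ and the vanishing bracket, and then match this with the Lie algebroid of the source-simply-connected groupoid $C\oplus E\tto E$ coming from the Dold--Kan correspondence. Your extra care with the anchor computation via $\rho_D$ and with reading off the associated complex is a slightly more explicit rendering of the same argument.
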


\begin{proof}
First we use Proposition \ref{seq:extension-vba} to identify $\K$ with $\pE^* C=C\oplus E$. In this way, core sections are exactly the sections of $\K$ of the form $\hat{c}:e\mapsto (c\circ \pE(e),e)$ for some $c\in\Gamma(C)$. Note that they span $\Gamma(\K)$ as a $C^\infty(E)$-module, so they entirely determine the Lie algebroid structure on $\K$. Furthermore, we easily deduce from condition $iii)$ in Definition \ref{def:vba}, and the discussion in Section \ref{sec:vba} that:
\begin{align*}
 \rho_{\scriptscriptstyle{\K}}(c_1,e)&=(\partial c_1,e),\\
 [\hat{c}_1,\hat{c}_2]_\K&=0,
\end{align*}
for any $c_i\in \Gamma(C)$. In the above equation,  $(\partial c_1,e)\in \pE^* E$, where $\pE^* E=E\oplus E$ is identified with $\ker d\pE\subset TE$ by Proposition \ref{seq:extension-vba-tangent}. As easily checked, we recover the bracket and anchor of the Lie algebroid associated with $\pE^* C\tto E$ as in the proof of the Proposition \ref{Dold-Kan}. Since the latter is source-simply connected, we conclude that $\G(\K)=\pE^* C\tto E$.
\end{proof}

\begin{remark}\label{rem:int:kpaths}
The following gives an explicit integration procedure for $\K$-paths that will be useful later on. Given a $\K$-path $ads:I\to \K$, where $a(s)=(c(s),e(s))$,
there is an obvious way to extend $a$ into an $s$-dependent section of $\K$, at least fiber-wise, simply by taking the corresponding time-dependent core section. Since core sections commute, similarly to the integration of an abelian Lie algebra, it is easily checked that $a$ is always $\K$-homotopic to a ``straight" path, namely: $s\mapsto ( \overline{c},e(0)+s\partial(\overline{c}))$, where $\overline{c}$ is given by $\overline{c}:=\int_0^1 c(s)ds.$ It follows that the quotient map by
$\K$-homotopies can be explicitly written as an integral:
\begin{align}
    P_1(\K)=P_1(\pE^* C)   &\longrightarrow \G(\K)=\pE^* C
    \notag \\
\bigl(c(s),e(s)\bigr)ds &\longmapsto \bigl(\int_0^1 c(s)ds,e(0) \bigr).\end{align}
\end{remark}

\subsection{Relating connections and curvatures}
We now explain how a connection on a \vba induces an Ehresmann connection on the underlying extension and how their respective horizontal lifts, curvatures and holonomy are related.

Given a split \vba $(D=A\oplus E \oplus C;A,E;M)$, there is a natural Ehresmann connection whose horizontal subbundle is given by $H=A\oplus E\to E$. Then necessarily, the horizontal lifting map $\Gamma(A)\to \Gamma(H)$ has values in linear sections of $D$, coinciding with the map $A\to \hat{A}\subset\Gamma_\ell(E,D),\ a\mapsto \hor(a)$. Also it follows that the curvatures of a such a connection, seen both as a linear connection on a \vba and as an Ehresmann connection are related by:
$$\omk(a,b)(e)=\bigl(\omega(a,b)(e),e\bigr), \quad e\in E. $$

\begin{proposition}\label{prop:vbafibration}
Any \vba $(D;A,E;M)$ determines a Lie algebroid fibration of the form:
\begin{equation}\SelectTips{cm}{}\xymatrix@C=30pt{*+[c]{\pE^*C\ } \ar@{^{(}->}@<-0.5pt>[r]^-{i} \ar[d]&D  \ar@{->>}@<-0.5pt>[r]^-{p} \ar[d]& A\ar@{->}[d] \\
             E \ar@{->}[r]^-{\id_E}&E  \ar@{->}[r]^-{\pE}              & M.}
\end{equation}
\end{proposition}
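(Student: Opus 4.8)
The plan is to prove that the extension from Proposition \ref{seq:extension-vba} admits a complete Ehresmann connection, which by definition is exactly what it means to be a Lie algebroid fibration. Since Proposition \ref{seq:extension-vba} already establishes that $p:D\twoheadrightarrow A$ is a Lie algebroid extension with kernel $\K\cong\pE^*C$, the only thing remaining is completeness of a suitable connection; the exact sequence displayed in the statement is just the sequence \eqref{eq:extension} specialized to this situation, so its exactness is already in hand.

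First I would choose a splitting $D\cong A\oplus E\oplus C$ of the \vba, which exists since $\Gamma_\ell(E,D)$ is locally free (as recalled in Section \ref{sec:vba}). As observed in the preceding paragraph (``Relating connections and curvatures''), such a splitting yields a natural Ehresmann connection with horizontal subbundle $H=A\oplus E\to E$, complementary to $\K=\pE^*C$, whose horizontal lifting $\hor:\Gamma(A)\to\Gamma_\ell(E,D)$ coincides with $a\mapsto h(a)$. So the candidate connection is produced for free from the splitting, and I do not need to construct it by hand.

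The crux is to verify completeness: if $a\in\Gamma(A)$ is such that $\rho_A(a)$ is a complete vector field on $M$, then $\rho_D(\hor(a))$ must be a complete vector field on $E$. The key structural fact is that $\rho_D(\hor(a))$ is a \emph{linear} vector field on $E$ covering $\rho_A(a)$ on $M$ — this is precisely the linearity of $\rho_D$ as a DVB-morphism exploited in \eqref{AhatE}. A linear vector field covering a complete base vector field is automatically complete: its flow is a one-parameter family of vector bundle automorphisms of $E$ covering the (complete) flow of $\rho_A(a)$, and linearity forces the fibrewise linear part to satisfy a linear ODE along each base trajectory, which cannot blow up in finite time so long as the base flow exists for all time. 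I expect this linearity-forces-completeness argument to be the main point, and I would state it cleanly as the heart of the proof.

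Finally I would note that the construction is independent of the chosen splitting in the relevant sense: any two horizontal lifts differ by an element of $\Omega^1(A,\Hom(E,C))$, i.e. by a section landing in $\K$, so completeness of one connection is equivalent to completeness of any other (the difference contributes a linear vertical vector field, which is likewise complete over a complete base flow). This shows the fibration structure is genuinely attached to the \vba and not an artifact of the splitting, completing the verification that $p:D\twoheadrightarrow A$ is a Lie algebroid fibration.
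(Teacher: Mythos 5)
Your proposal is correct and follows essentially the same route as the paper: both take the Ehresmann connection induced by a splitting and reduce completeness to the observation that $\rho_D(\hor(a))$ is a \emph{linear} vector field on $E$ covering $\rho_A(a)$, hence complete whenever the base vector field is. The paper phrases this via the derivation $\D_a=[\hor(a),\cdot]_D$ and its symbol, and omits your (unneeded but harmless) remark on independence of the splitting, but the key argument is identical.
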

\begin{proof}
Given an $A$-path $adt:TI\to A$, extended to a time dependant section $a_t\in\Gamma(A)$, the parallel transport $\holK_a:\K|_{E_0}\to\K|_{E_1}$ (if well defined) is given by the trajectories of the time dependent ordinary differential operator defined on sections of $\K$ by $\D_a:\Gamma(\K)\to\Gamma(\K),\ k\to [\hor(a),k]_D$  (see \cite{Br}). Recall that $\D_a$ is a derivation of $\K$, and that such operator is equivalent to a linear vector field on $\K\to E$ whose flow is a Lie algebroid morphism. In the case of a \vba with a linear connection, the symbol of $\D_a$ is itself a linear vector field on $E$, since it is given by $\rho_D(\hor(a))$. In particular, $\rho_D(\hor(a))$ is complete whenever its projection $\rho_A(a)$ to $M$ is complete. Thus any linear connection on a \vba induces a complete Ehresmann connection.
\end{proof}

For a split \vba, parallel transport is always well defined. Clearly, the holonomy in $\K$ induced by the Lie algebroid fibration is
 related to the linear holonomy in $E$ and $C$ in the following way:
\begin{equation}\label{eq:holonomies}
\holK_a(c,e)=\left(\holC_a(c),\holE_a(e)\right) \quad (c,e)\in\K=\pE^* C.
\end{equation}



\subsection{The homotopy sequence of a \vba}

Since \vbas are special cases of Lie algebroid fibrations, one may specialize the long exact sequence of the homotopy groups given by Theorem \ref{thm:longexactfibration}.

\begin{theorem}\label{prop:longexactfibrationvbacase}
Given a \vba  $(D;A,E;M)$, the following assertions hold:
\begin{enumerate}[label=\roman*)]
\item \label{prop:longexactfibrationvbacasei} the homotopy long exact sequence associated with the fibration $\SelectTips{cm}{}\xymatrix@C=15pt{p:D  \ar@{->>}[r]&  A_{}}$ reads
\begin{gather} 
 \pi_k(D)\simeq p_E^*\pi_k(A) \quad\quad\quad\forall k\geq 3,\nonumber\\
     \SelectTips{cm}{}\xymatrix@C=15pt{\pi_2(D)\ \ar@{^{(}->}@<-0.5pt>[r]& \pE^*\pi_{2}(A)\ar@<-0.5pt>[r]^-{\delta_2}& \G(\K) \ar@<-0.5pt>[r]^-{\tilde{\iota}}& \G(A_E) \ar@{->>}@<-0.5pt>[r]^{\pi}& \G(A). }\label{seq:monodromy:inject}
\end{gather}

\item \label{prop:longexactfibrationvbacaseii} the monodromy group $\MON(D,e)$ of $D$ at $e\in E$ fits into an exact sequence of groups
\begin{equation}\label{seq:exact:monodromy}
\SelectTips{cm}{}\xymatrix@C=15pt{\im (\delta_2,e)\  \ar@{^{(}->}@<-0.5pt>[r]& \MON(D,e)   \ar@{->>}@<-0.5pt>[r]^-{p} & \MON(A,m),}
\end{equation}
where $\MON(A,m)$ denotes the monodromy group of $A$ at $m:=p_E(e)$.
\end{enumerate}
\end{theorem}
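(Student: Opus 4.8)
The plan is to specialize the homotopy long exact sequence of Theorem \ref{thm:longexactfibration} to the Lie algebroid fibration $p:D\twoheadrightarrow A$ furnished by Proposition \ref{prop:vbafibration}, and then to read off the two claimed statements by computing the homotopy groups of the kernel $\K=\pE^*C$. The starting point is Proposition \ref{prop:int:kernel}, which identifies $\G(\K)=\pE^*C\tto E$ as the source-simply connected integration of $\K$. Since $\K$ integrates to a source-simply connected groupoid, its higher homotopy groups vanish: I would argue that $\pi_1(\K)=\G(\K)=\pE^*C$ (the groupoid itself, since it is source-simply connected) and, crucially, that $\pi_k(\K)=0$ for all $k\geq 2$. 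The vanishing of the higher homotopy is where the abelian/linear nature of $\K$ enters: because $\K=\pE^*C$ is (fiberwise over $E$) an abelian Lie algebroid with core sections that commute, as recorded in Remark \ref{rem:int:kpaths}, every $\K$-sphere of dimension $\geq 2$ is $\K$-homotopically trivial. This is the point I expect to need the most care to justify rigorously.

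For part \ref{prop:longexactfibrationvbacasei}, I would substitute $\pi_k(\K)=0$ for $k\geq 2$ into the exact sequence
\begin{align*}
\cdots\to \pi_k(\K)\to \pi_k(D)\to \pE^*\pi_k(A)\xrightarrow{\delta_k}\pi_{k-1}(\K)\to\cdots
\end{align*}
For $k\geq 3$ both neighboring kernel terms $\pi_k(\K)$ and $\pi_{k-1}(\K)$ vanish, so the middle map $\pi_k(D)\to \pE^*\pi_k(A)$ is an isomorphism, giving the first displayed line. At the low end, the tail of the sequence reads
\begin{align*}
\pi_2(\K)\to\pi_2(D)\to\pE^*\pi_2(A)\xrightarrow{\delta_2}\pi_1(\K)\xrightarrow{\tilde\iota}\pi_1(D)\xrightarrow{p}\pi_1(A),
\end{align*}
and since $\pi_2(\K)=0$ the map $\pi_2(D)\hookrightarrow\pE^*\pi_2(A)$ becomes injective, while $\pi_1(\K)=\G(\K)$ and $\pi_1(D)=\G(A_E)=\G(D)$ give exactly the sequence \eqref{seq:monodromy:inject}. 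I would be careful to note that $\pi_1(D)=\G(D)$ and $\pi_1(A)=\G(A)$ are the Weinstein groupoids, so the identifications of the last three terms are literal.

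For part \ref{prop:longexactfibrationvbacaseii}, the monodromy groups are by definition the images of the transgression $\delta_2$, equivalently the isotropy of the Weinstein groupoid obstructing integrability; concretely $\Mon(D,e)=\ker(\pi_1(D)\to\pi_1(\G(D)))$ restricted to the fiber over $e$. The strategy is to restrict the exact sequence \eqref{seq:monodromy:inject} to isotropy at a fixed point $e\in E$ lying over $m=p_E(e)\in M$. The surjection $\MON(D,e)\twoheadrightarrow\MON(A,m)$ comes from the fact that $p:\pi_1(D)\to\pi_1(A)$ covers the monodromy of $A$, and the kernel is precisely the image of $\delta_2$ restricted to the fiber $\pE^*\pi_2(A)_e\cong\pi_2(A,m)$, which I denote $\im(\delta_2,e)$. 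The main subtlety here, beyond the vanishing of higher $\pi_k(\K)$ already addressed, is to check that taking isotropy groups at $e$ preserves exactness of \eqref{seq:monodromy:inject} — that is, that the connecting maps and the pullback structure are compatible with restriction to the fiber over a fixed base point — which follows from the naturality of the transgression maps $\delta_n$ in Theorem \ref{thm:longexactfibration} with respect to the base point in $E$.
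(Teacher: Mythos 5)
Your treatment of part \ref{prop:longexactfibrationvbacasei} is correct and coincides with the paper's: both rest on the vanishing of $\pi_k(\K)$ for $k\geq 2$, which follows from Proposition \ref{prop:int:kernel} (the integrating groupoid $\pE^*C\tto E$ has contractible source fibers), and then on reading off Theorem \ref{thm:longexactfibration}.

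Part \ref{prop:longexactfibrationvbacaseii}, however, has a genuine gap. You write that the monodromy groups are ``by definition the images of the transgression $\delta_2$'' and propose to obtain \eqref{seq:exact:monodromy} by restricting \eqref{seq:monodromy:inject} to the isotropy at $e$. This conflates two different transgression maps. The $\delta_2$ in \eqref{seq:monodromy:inject} is the transgression of the Lie algebroid fibration $p:D\twoheadrightarrow A$, with source $\pE^*\pi_2(A)$; the monodromy group $\MON(D,e)$ is, in the Crainic--Fernandes sense, the image of a \emph{different} transgression $\tilde\delta_2^{CF}:\pi_2(L_D,e)\to \MON(D)_e$ whose source is the second homotopy group of the \emph{orbit} $L_D$ of $D$ through $e$. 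Neither $\pi_2(L_D,e)$ nor $\pi_2(L_A,m)$ appears anywhere in \eqref{seq:monodromy:inject}, so no restriction of that sequence to a fiber can produce \eqref{seq:exact:monodromy}; in particular the surjectivity of $\MON(D,e)\to\MON(A,m)$ cannot be extracted this way.

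The paper's argument supplies exactly the missing ingredient. One places the two Crainic--Fernandes exact sequences $\pi_2(D,e)\hookrightarrow\pi_2(L_D,e)\twoheadrightarrow\MON(D)_e$ and $\pi_2(A,m)\hookrightarrow\pi_2(L_A,m)\twoheadrightarrow\MON(A)_m$ into a commutative diagram over the projection $p$, and then proves the key geometric fact that $\pi_2(L_D,e)\to\pi_2(L_A,m)$ is an \emph{isomorphism}. This is done by choosing a splitting of the Atiyah sequence of $A$ over the leaf, composing with $\nablaE$ to get an Ehresmann connection on the fibration $L_D\to L_A$ whose curvature takes values in the distribution $i^*_{L_D}\im\partial$, and observing that the leaves of that distribution are affine spaces, hence topologically trivial. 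Combined with the injectivity of $\pi_2(D,e)\to\pi_2(A,m)$ and the identification $\pi_2(A,m)/\pi_2(D,e)\simeq\im(\delta_2,e)$ (which \emph{do} follow from part \ref{prop:longexactfibrationvbacasei}), a diagram chase yields \eqref{seq:exact:monodromy}. Without the leaf-level comparison your argument cannot close.
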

\begin{proof} It follows from Proposition \ref{prop:int:kernel} that the fundamental groups $\pi_k(\K)$ are trivial for any $k\geq 2$. The first assertion then comes from the exactness of the homotopy sequence of Theorem \ref{thm:longexactfibration}.

In order to prove the second assertion, we denote by $L_D$ the orbit of $D$ through $e$, and by $L_A$ the orbit of $A$ through $m$. As a general fact \cite[Cor.\, 3,12]{BZ}, the monodromy groups fit into exact sequences, as given by the horizontal lines of the following diagram:
\begin{align}\label{diagram:monodromy:chasing}
\begin{gathered}
\SelectTips{cm}{}
\xymatrix{{\pi_2(D,e)}\ \ar@{^{(}->}@<-0.5pt>[r]^{}
                                         \ar@{->}@<-0.5pt>[d]^{}    
                                                          & \pi_2(L_D,e)  \ar@{->>}@<-0.5pt>[r]^-{\tilde{\delta}_2^{CF}}
                                                                          \ar@{->}@<-0.5pt>[d]^{} 
                                                                                                 & \MON(D)_e \ar@{->}@<-0.5pt>[d]^{} \\
                                        \pi_2(A,m)\ \ar@{^{(}->}@<-0.5pt>[r]^{}
                                                          & \pi_2(L_A,m)  \ar@{->>}@<-0.5pt>[r]^-{\delta_2^{CF}}
                                                                                                 & \MON(A)_m,}\end{gathered}                        
\end{align}
where ${\tilde{\delta}_2^{CF}},\ {{\delta}_2^{CF}}$ denote the transgression maps of \cite{CF03} and where the vertical lines are naturally induced by the projection $p:D\to A$.

In this diagram, one shall first observe that the map $\pi_2(L_D,e)\to \pi_2(L_A,m)$ is an isomorphism, which can be argued as follows: 
 by working leaf-wise, one may assume that $A$ is transitive with unique leaf $L_A$, and then choose a splitting $\sigma: TL_A\to A$ of the corresponding Atiyah sequence. By composing $\sigma$ with $\nablaE$ and then restricting to $L_D$, we obtain an Ehresmann connection on the fibration $L_D\to L_A$ whose curvature has values in the distribution $i^*_{L_D}\im\partial\subset T{L_D}$. The leaves of this distribution being affine spaces (this is a consequence of Proposition \ref{prop:int:kernel}) they have trivial topology, from which we deduce that $\pi_2(L_D,e)\simeq \pi_2(L_A,m)$.

  Observe now that the map $\pi_2(D,e)\to \pi_2(A,m)$ is an injection and is such  that $\pi_2(A,m)/\pi_2(D,e)\simeq \im (\delta_2,e)$ (this follows from \eqref{seq:monodromy:inject}). The exact sequence \eqref{seq:exact:monodromy} then easily follows by diagram chasing in \eqref{diagram:monodromy:chasing}.
\end{proof}


\section{Strict $2$-groupoids and $2$-representations}\label{sec:2rep}

In this section we introduce the notion of a representation of a $2$-groupoid, which can be thought of as a categorified version of the usual notion of a representation of a Lie groupoid. The idea is the following: just as Lie groupoids can be represented on a vector bundle, Lie $2$-groupoids are represented in $2$-vector bundles. That is, a $2$-representation is an action of a $2$-groupoid via symmetries of a $2$-vector bundle.

\subsection{Strict $2$-groupoids}

Recall that a strict $2$-category is defined as a category enriched over the category of small categories $\mathbf{\mathrm{\bf Cat}}$.
\begin{definition}
A {\bf strict $2$-groupoid} is a strict $2$-category where all $1$-morphisms and $2$-morphisms have inverses. 
\end{definition}

More explicitly, a strict $2$-groupoid is given by a space of objects $M$, a space of $1$-morphisms $\G^{(1)}$ between objects, and a space of $2$-morphisms $\G^{(2)}$ between $1$-morphisms. The space of $1$-morphisms comes equipped with a strictly associative composition law and strict inverses, so that $1$-morphisms form a groupoid over objects, while the space of $2$-morphisms comes equipped with a horizontal and a vertical composition, so that $\G^{(2)}$ is both a groupoid over $\G^{(1)}$ and over $M$. 

Furthermore, it is required that horizontal and vertical compositions commute, namely:  
$$  (\sigma_4\circV \sigma_3)\circH (\sigma_2\circV \sigma_1)= (\sigma_4\circH \sigma_2)\circV (\sigma_3\circH \sigma_1), $$
for any $\sigma_i, i=1,...,4\in \G^{(2)}$ for which the above makes sense.  We shall denote a strict 2-groupoid $2$-$\G$ and we represent 2-$\G$ diagramatically as follows:

\begin{equation}\label{tgroupoid}
2\text{-}\G: \begin{gathered} \SelectTips{cm}{}\xymatrix@C=5pt{\G^{(2)} \ar@<0.5ex>[rr]^{\sV,\tV}  \ar@<-.5ex>[rr]
                                            \ar@<0.5ex>[dr] \ar@<-.5ex>[dr]_{\sH,\tH} &          & \G^{(1)}   \ar@<0.5ex>[dl]^{\s,\t}
                                                                                                                 \ar@<-.5ex>[dl]\\
                                                                             &\G^{(0)} }\end{gathered}
\end{equation}

Consider now a $2$-groupoid $2$-$\G$ as in \eqref{tgroupoid}. We call \textbf{$1$-truncation} of $2$-$\G$ the orbit space $\G^{(1)}/\mathcal{\G}^{(2)}$, where two $1$-morphisms are identified if there exists a $2$-morphism between them.  The following proposition is straightforward.

\begin{proposition}\label{1truncation}
Let $2$-$\G$ be a $2$-groupoid as in \eqref{tgroupoid}. The $1$-truncation $\G^{(1)}/\mathcal{\G}^{(2)}$ inherits a natural groupoid structure over $M$.
\end{proposition}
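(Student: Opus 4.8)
The plan is to show that the relation defining the $1$-truncation is compatible with every structure map of the groupoid $\G^{(1)}\tto M$, so that these maps descend to the quotient and endow it with a groupoid structure over $M=\G^{(0)}$. Write $f\sim g$ for $f,g\in\G^{(1)}$ whenever there exists $\sigma\in\G^{(2)}$ with $\sV(\sigma)=f$ and $\tV(\sigma)=g$. The first step is to observe that $\sim$ is an equivalence relation: this is precisely the fact that $\G^{(2)}$ is a groupoid over $\G^{(1)}$ under the vertical composition $\circV$, with reflexivity given by vertical units, symmetry by vertical inverses, and transitivity by $\circV$. Hence $\G^{(1)}/\G^{(2)}$ is a well-defined quotient and we write $\pi\colon\G^{(1)}\to\G^{(1)}/\G^{(2)}$ for the projection.

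Next I would check that source and target descend. If $\sigma$ is a $2$-morphism from $f$ to $g$, then $f$ and $g$ share the horizontal source and target $\sH(\sigma),\tH(\sigma)\in M$, so that $\s(f)=\s(g)$ and $\t(f)=\t(g)$; therefore $\s,\t$ factor through $\pi$ and define source and target maps on the quotient. The unit section descends at once, as $\pi$ composed with the unit section of $\G^{(1)}$.

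The two substantive points are the descent of multiplication and of inversion, and here the horizontal composition $\circH$ enters. For multiplication, suppose $f\sim f'$ via $\sigma$ and $g\sim g'$ via $\tau$, with $g\circ f$ defined. Since source and target are constant along $\sim$, $g'\circ f'$ is defined as well, and the horizontal composite $\tau\circH\sigma$ is a $2$-morphism from $g\circ f$ to $g'\circ f'$; thus $g\circ f\sim g'\circ f'$ and the product passes to classes. For inversion, let $\sigma$ be a $2$-morphism from $f$ to $f'$ with $f,f'\colon x\to y$, and let $\sigma^{-1}$ denote its inverse for the horizontal groupoid structure on $\G^{(2)}\tto M$. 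From $\sigma\circH\sigma^{-1}=\id_{\id_y}$ and $\sigma^{-1}\circH\sigma=\id_{\id_x}$, comparing the vertical sources and targets on both sides forces $\sV(\sigma^{-1})=f^{-1}$ and $\tV(\sigma^{-1})=(f')^{-1}$, so that $f^{-1}\sim(f')^{-1}$ and inversion descends.

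Finally, once the operations are defined on the quotient, the groupoid axioms (associativity, the unit laws, and the inverse identities) follow immediately from the corresponding identities in $\G^{(1)}$, read on representatives, since $\pi$ is a surjection intertwining all operations. The one place demanding genuine care is the inversion step, where one must correctly identify the vertical source and target of the horizontal inverse of a $2$-morphism; this is where the full $2$-groupoid structure is used, in particular the compatibility between the horizontal units $\id_{\id_x}$ and the vertical units. I expect this to be the main, albeit mild, obstacle; everything else is formal. Smoothness of the quotient is not part of the claim, which concerns only the groupoid structure at the level of sets.
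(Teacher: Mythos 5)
Your proof is correct and is exactly the routine verification the paper has in mind: the paper offers no argument at all, stating only that the proposition is straightforward, so your write-up simply supplies the omitted details (vertical groupoid structure gives the equivalence relation, functoriality of $\circH$ gives the congruence property, and the axioms descend along the surjection $\pi$). As a minor simplification, the inversion step need not invoke horizontal inverses of $2$-morphisms: once multiplication and units descend, $f\sim f'$ gives $(f')^{-1}=f^{-1}\cdot f\cdot (f')^{-1}\sim f^{-1}\cdot f'\cdot (f')^{-1}=f^{-1}$ directly.
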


\subsection{The Weinstein \texorpdfstring{$2$}{2}-groupoid of a Lie algebroid}\label{sec:Weinsteintwogroupoid}

In this section we are concerned with $2$-groupoids associated to Lie algebroids. More precisely, to any Lie algebroid $p_A:A\to M$, we define its Weinstein $2$-groupoid. The construction is very similar to that of \cite{HKK} for Hausdorff topological spaces. See also \cite{SW, MaPi} for similar constructions in the case $A=TM$.

 Given an $A$-path $adt:TI\to A$ with $a:I\to A$, we denote by $\s(adt)=p_A\circ a(0)$ its source and $\t(adt)=p_A\circ a(1)$ its target.
 An $A$-path with source $x$ and target $y$ will be denoted by $a:x\to y$. The \textbf{inverse} of $a$, denoted by $a^{-1}dt$, is defined to be the $A$-path
$$a^{\mathbf{-1}}(t)dt:=-a(1-t)dt.$$
Every point $x\in M$ determines a trivial $A$-path $\mathbf{1}_x:=0_xdt$, where $0_x\in A_x$. The $A$-path $\mathbf{1}_x$ is called \textbf{unit path} at $x$. Note that at the moment, both notions of units and inverses stand as formal ones. However, they become genuine ones only after moding out by thin homotopies, see below.

\begin{definition}
 An $A$-path $adt:TI\to A$ is {\bf flat at its boundaries} if for any smooth section $\theta\in \Gamma(A^*)$,
 the map $\langle \theta,a\rangle:I\to \mathbb{R}$ vanishes flatly at $t=0$ and $t=1$. That is, $ \langle\theta,a\rangle$ vanishes at $t=0$ and $t=1$ as well as all its higher derivatives.
\end{definition}

\begin{definition}
Given two $A$-paths $adt,bdt:TI\to A$ with $\t(adt)=\s(bdt)$, their {\bf concatenation} $(a \cdot b)dt$ is defined by
\begin{equation*}(b\cdot a)(t)dt =\left\{ \begin{aligned} 
       & {2}a(2t)dt,  & \text{ if } t\in[0,{1}/{2}],\\
       & {2}b(2t-1)dt, & \text{ if } t\in[{1}/{2},1].
        \end{aligned}\right.
\end{equation*}
\end{definition}
Obviously, $(b\cdot a)dt$ is smooth whenever $a$ and $b$ are flat at their boundaries.
In order to concatenate arbitrary $A$-paths, one can replace them by their {\bf reparametrizations}. For that, we consider  a cut-off function $\tau:I\to I$, namely $\tau$ can be taken to be the restriction of any smooth function $f:\mathbb{R}\to [0,1]$ such that $f|_{(-\infty,0)}=0$
and $f|_{(1,\infty)}=0$, so that all the derivatives of $\tau$ vanish at $0$ and $1$.
Then the reparametrization $a^\tau$ is the $A$-path given by $a^\tau(t)dt:= a\circ d\tau:TI \to A$, where $a^\tau(t):=\tau'(t)(a\circ\tau)(t)$.

\begin{definition}
An $\mathbf{A}$\textbf{-homotopy} between two $A$-paths $a_0dt$ and $a_1dt$ is a Lie algebroid morphism $\sig=adt+b ds:TI^2\to A$ such that $a_0dt = adt|_{\{s=0\}}$ and $a_1dt = adt|_{\{s=1\}}$, and satisfying the boundary conditions $b|_{\{t=0,1\}}=0$.
     \end{definition}
 We shall denote by $\sV(\sig)=a_0dt$ and $\tV(\sig)=a_1dt$ the {\bf source} and {\bf target} of $\sig$, and write $\sig:a_0\Rightarrow a_1$.
A simple way to picture an $A$-homotopy is as follows:
\begin{equation*}
\begin{gathered}\SelectTips{cm}{}\xymatrix{x \ar[r]^{a_1 dt}  \ar@{}[dr]|{\sig}             &   y            \\
                                              x\ar@{..>}[u]^{0 ds}\ar[r]_{a_0 dt}  & y\ar@{..>}[u]_{0 ds}}\end{gathered}
																							\quad\quad\begin{gathered}\xymatrix@!@C=50pt{x \ar@/_1,1pc/[r]|{\overset{ }{\, a_0\, }}="d"   \ar@/^1,1pc/[r]|{\, a_1\, }="e" & y \ar@{=>}^-{\sig}"d";"e"-<0pt,4pt>}
\end{gathered}
\end{equation*}

\begin{definition}
 An $A$-homotopy is called {\bf thin} if the induced application $\wedge^2 \sig:\wedge^2 TI^2\to \wedge^2 A$ is trivial.
 Two $A$-paths are said to be {\bf thin homotopic} if there exists a thin homotopy $\sig:a_0\Rightarrow a_1$.
\end{definition}

\begin{definition}
An $A$-homotopy $\sig=adt+bds:TI^2\to A$ is said to be {\bf flat at its boundary} if, for any smooth section $\eta\in \Gamma(A^*)$,
the following conditions holds:
\begin{itemize}
\item[i)] the application $\langle \eta,b\rangle:I\to \mathbb{R}$ vanishes flatly at $\{s=0,1\}$,
\item[ii)] the application $\langle \eta,a\rangle:I\to \mathbb{R}$ vanishes flatly at $\{t=0,1\}$.
\end{itemize}
\end{definition}
Note in particular that in this case, both $A$-paths $\sV(\sigma)$ and $\tV(\sigma)$  are flat at their boundaries.
\begin{definition}
Given two $A$-homotopies $\sig=adt+bds$ and $\sig'=a'dt+b'ds$ with $\tV(\sig)=\sV(\sig')$,
we define their \textbf{vertical concatenation} by
\begin{equation*}\bigl(\sig' \circV \sig \bigr)(t,s) =\left\{ \begin{aligned} 
       & a(t,2s)dt+{2}b(t,2s)ds,  & \text{ if } s\in[0,{1}/{2}],\\
       & a'(t,2s-1)dt+{2} b'(t,2s-1), & \text{ if } s\in[{1}/{2},1],
        \end{aligned}\right.
\end{equation*}
\end{definition}
The vertical concatenation may be pictured in the following way:
$$
\xymatrix@!@C=50pt{ x \ar@/_1,5pc/[r]|{\overset{ }{\, a_0\, }}="a"   \ar[r]|{\overset{ }{\, a_1\, }}="b" \ar@/^1,5pc/[r]|{\  a_2\, }="c" & y \ar@{=>}^-{\sig\ }"a";"b"-<0pt,4pt> \ar@{=>}^-{\sig'}"b";"c"-<0pt,4pt>}
\quad\longmapsto\quad
\xymatrix@!@C=50pt{x \ar@/_1,1pc/[r]|{\overset{ }{\, a_0\, }}="d"   \ar@/^1,1pc/[r]|{\, a_2\, }="e" & y \ar@{=>}^{\sig'\circV\sig}"d";"e"-<0pt,4pt>}$$
The vertical concatenation is smooth whenever $\sigma$ and $\sigma'$ are flat at their boundaries (see \cite[Lem. 3,7]{BZ}). In order to concatenate smoothly two arbitrary $A$-homotopies, 
one has to replace them by their respective reparametrizations, where the \textbf{reparametrization}
of an $A$-homotopy $\sigma$ is given by $\sigma^{\tau}:=\sigma\circ d(\tau\times\tau)$, namely
$\sigma^{\tau}=\tau'(t)a(\tau(t),\tau(s))dt+\tau'(s)b(\tau(t),\tau(s))ds.$

\begin{definition}
The \textbf{vertical inverse} of an $A$-homotopy $\sigma=adt+bds$ is given by
\begin{equation*}
\bigl(\sig^{{\bf -1_V}}\bigr)(t,s)=a(t,s)dt-b(t,1-s)ds.
\end{equation*}
The \textbf{vertical unit} at an $A$-path $adt$ is the $A$-homotopy $\mathbf{1}^V_{adt}:=a(t)dt+0_{\gamma(t)}ds$ (so that $\mathbf{1}^V_{adt}:adt\Rightarrow adt$).
\end{definition}
\begin{definition}
The horizontal source and target maps $\sH,\tH $ of an $A$-homotopy are given by $\sH=\s\circ\sV$ and $\tH=\t\circ\sV$.
Given two $A$-homotopies $\sig=adt+bds$ and $\sig'=a'dt+b'ds$ such that $\tH(\sig)=\sH(\sig')$,
one defines their \textbf{horizontal concatenation} in the following way:
\begin{equation*}\bigl(\sig' \circH\sig \bigr)(t,s) =\left\{ \begin{aligned} 
       & {2}\,a(2t,s)dt+b(2t,s)ds,  & \text{ if } t\in[0,{1}/{2}],\\
       & {2}\,a'(2t-1,s)dt+b'(2t-1,s)ds, & \text{ if } t\in[{1}/{2},1].
        \end{aligned}\right.
\end{equation*}\end{definition}
As easily checked, the horizontal concatenation is smooth whenever $\sigma, \sigma'$ are flat at their boundaries, and may be illustrated as follows:
$$\xymatrix@!@C=50pt{x \ar@/_1.1pc/[r]|{\overset{ }{\, a_0\, }}="a"  \ar@/^1.1pc/[r]|{\, {a_1}_{\,} \, }="b"
                  & y \ar@/_1.1pc/[r]|{\overset{ }{\, a'_0\, }}="c"  \ar@/^1.1pc/[r]|{\, {a'_1} \, }="d" 
							  	  & z \ar@{=>}^-{\sig}"a";"b"-<0pt,4pt>\ar@{=>}^-{\sig'}"c";"d"-<0pt,4pt>}
\quad\longmapsto\quad
\xymatrix@!@C=50pt{x \ar@/_1.1pc/[r]|{{ }{\, a'_0\cdot a_0\, }}="a"  \ar@/^1.1pc/[r]|{\, a'_1\cdot a_1 \, }="c" & y \ar@{=>}^{\sig'\circH \sig}"a";"c"-<0pt,4pt>}$$

\begin{definition}
The \textbf{horizontal inverse} of an $A$-homotopy is defined by:
\begin{equation*}
\bigl(\sig^{{\bf -1_H}}\bigr)(t,s)=-a(1-t,s)dt+b(1-t,s)ds.
\end{equation*}
Given $x\in M$, the \textbf{horizontal unit} is the $A$-homotopy
$\mathbf{1}^{\scriptscriptstyle{H}}_x:=0_xdt+0_xds,$ between the unit path $\mathbf{1}_x$ and itself. 
\end{definition}

\begin{definition}
 We call $\mathbf{3}$\textbf{-homotopy} any Lie algebroid morphism $H=H_1 dt+H_2 ds + H_3 du:TI^3\to A$ satisfying the boundary conditions:
 $H_3 du|_{\{t=0,1\}} =0\, du$, and  $H_3 du|_{\{s=0,1\}} =0\,du$.   
\end{definition}
 In that case, $H$ defines a $3$-homotopy between $\sig_0:=(H_1 dt+ H_2 ds)|_{\{u=0\}}$ and
 $\sig_1:=(H_1 dt+ H_2 ds)|_{\{u=1\}}$. Note that, as a consequence of $H$ being a Lie algebroid morphism,
$\sig_1$ is an $A$-homotopy if, and only if, $\sig_0$ is (see \cite{BZ}). In this work, we shall only consider $3$-homotopies between $A$-homotopies.
Note also that any $A$-homotopy $\sig$ is $3$-homotopic to its reparametrization $\sig^\tau$.

 A $3$-homotopy between two $A$-homotopies $\sig_0$ and $\sig_1$ can be pictured in the following way:
$$  \begin{gathered}\SelectTips{cm}{}\xymatrix@!0{
                                                                       &  x\ar[rr]\ar@{<..}'[d][dd]\ar@{}[dr]|{\sig_1} &                                                 &  y\ar@{<..}[dd] \\
       x \ar@{..>}[ur]\ar[rr]\ar@{<..}[dd]\ar@{}[dr]|{}\ar@{}[ddrr]|{} &                                               & y \ar@{..>}[ur]\ar@{<..}[dd]\ar@{}[dr]|{}                         \\
                                                                       &  x\ar'[r][rr]\ar@{}[dr]|{\sig_0}               &                                                 &    y            \\
       x  \ar[rr]\ar@{..>}[ur]                                         &                                               & y \ar@{..>}[ur]
}\end{gathered}
\quad\quad\begin{gathered}\xymatrix@!@C=50pt{x \ar@{}[r]|{H}\ar@/_1,5pc/[r]|<<<{\overset{ }{\, a_0dt\, }}="d"   \ar@/^1,6pc/[r]|<<<<<<<<<<<{\, a_1dt\, }="e" & y \ar@/_0,9pc/@{->}|{\sig_0}"d";"e"\ar@/^0,9pc/@{->}|{\sig_1}"d";"e"}
\end{gathered}$$
Where the dotted squares represent trivial morphisms.

\begin{definition}
Given a Lie algebroid $A\to M$, we call {\bf Weinstein $2$-groupoid of $A$}, denoted by  $\tPa(A)$,
the strict $2$-groupoid with $M$ as space of objects, where $1$-morphisms are thin homotopy classes of $A$-paths,
and where $2$-morphisms are given by $3$-homotopy classes of $A$-homotopies, with horizontal and vertical compositions given by the concatenations
along the $t$ and $s$ variables respectively.
\end{definition}

Routine computations show that $\tPa(A)$ is indeed a strict $2$-groupoid, details will be left to the reader. Let us rather emphasize the notations: in the sequel, we shall denote by $P_1(A)$ the space of thin homotopy classes of $A$-paths, and by $P_2(A)$ the space of $3$-homotopy classes of $A$-homotopies, so that $\tPa(A)$ denotes the strict $2$-groupoid structure with $P_1(A)$ as $1$-morphisms, and $P_2(A)$ as $2$-morphisms:

$$\tPa(A): \begin{gathered}\SelectTips{cm}{}\xymatrix@C=5pt{
           P_2(A) \ar@<0.5ex>[rr]^{\sV,\tV} 
                  \ar@<-.5ex>[rr] \ar@<0.5ex>[dr]
                                  \ar@<-.5ex>[dr]_{\sH,\tH} &             & P_1(A)   \ar@<0.5ex>[dl]^{\s,\t}
                                                                                       \ar@<-.5ex>[dl]\\
                                                              &   M
                             }\end{gathered}$$

As explained in Proposition \ref{1truncation}, the $1$-truncation of a $2$-groupoid with objects $M$, inherits the structure of a groupoid over $M$. Applying this construction to the Weinstein $2$-groupoid of a Lie algebroid yields the following result.                                                        
                           
\begin{proposition}\label{prop:weinsteintruncation}                         
Let $A\to M$ be a Lie algebroid. The $1$-truncation $P_1(A)/P_2(A)$ of the Weinstein $2$-groupoid of $A$ coincides with the Weinstein groupoid $\G(A)$ of $A$.
\end{proposition}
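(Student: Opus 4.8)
The plan is to show that the two constructions yield the same quotient of the space of $A$-paths, and then that the resulting bijection intertwines the groupoid operations. Write $P(A)$ for the space of $A$-paths, $\sim_{\mathrm{thin}}$ for thin homotopy, and $\sim_A$ for $A$-homotopy. Recall that $\G(A)=P(A)/\!\sim_A$, while $P_1(A)=P(A)/\!\sim_{\mathrm{thin}}$ and the $1$-truncation is $P_1(A)/P_2(A)$, where two thin homotopy classes are identified exactly when there is a $2$-morphism between them, i.e.\ (discarding the irrelevant $3$-homotopy data) an $A$-homotopy between representatives. First I would record that this identification relation $\approx$ on $P_1(A)$ is genuinely an equivalence relation: reflexivity, symmetry and transitivity are witnessed by the vertical unit $\mathbf 1^V$, the vertical inverse $\sigma^{-1_V}$, and the vertical concatenation $\circV$ respectively, all of which are part of the $2$-groupoid structure of $\tPa(A)$.

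The key step is that the two-step quotient collapses to a single one. Since a thin homotopy is by definition an $A$-homotopy (namely one whose induced map $\wedge^2\sigma$ is trivial), we have the inclusion of relations $\sim_{\mathrm{thin}}\,\subseteq\,\sim_A$. Pulling $\approx$ back along $P(A)\twoheadrightarrow P_1(A)$ gives: $a\approx' b$ iff there are representatives $a',b'$ with $a\sim_{\mathrm{thin}}a'$, $a'\sim_A b'$ and $b'\sim_{\mathrm{thin}}b$; using $\sim_{\mathrm{thin}}\subseteq\sim_A$ together with transitivity of $\sim_A$ (vertical concatenation of the corresponding $A$-homotopies) this says exactly $a\sim_A b$, while the converse inclusion is trivial by taking $a'=a$, $b'=b$. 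Hence $\approx'\,=\,\sim_A$, so that
\[
P_1(A)/P_2(A)\;=\;\bigl(P(A)/\!\sim_{\mathrm{thin}}\bigr)\big/\!\approx\;=\;P(A)/\!\sim_A\;=\;\G(A)
\]
as sets, the identification being induced by the identity on $P(A)$.

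Finally I would check that this bijection is an isomorphism of groupoids over $M$. Both sides have $M$ as objects, and in both the source and target are the endpoints $\s(adt)$ and $\t(adt)$ of an $A$-path, the composition is induced by concatenation of $A$-paths (the horizontal composition of $1$-morphisms of $\tPa(A)$ in the truncated case, by Proposition \ref{1truncation}, and the Weinstein multiplication in $\G(A)$), the units by the trivial paths $\mathbf 1_x$, and the inverses by the reversed paths $a^{-1}$. Since on both sides every structure map is inherited from the very same operation on representatives in $P(A)$, the set-theoretic identification of the previous paragraph automatically carries each structure map to its counterpart, yielding the asserted isomorphism. The only genuinely delicate point is the collapse of the two quotients, and it rests entirely on the inclusion $\sim_{\mathrm{thin}}\subseteq\sim_A$ together with the (vertical) groupoid structure on $2$-morphisms; the remaining verifications are formal, because all operations involved descend from concatenation, reversal and trivial paths of $A$-paths.
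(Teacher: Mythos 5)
Your proposal is correct and follows essentially the same route as the paper, whose proof simply invokes the two facts that thin homotopies are genuine $A$-homotopies and that $A$-paths are thin homotopic to their reparametrizations (the latter being what makes concatenation descend to both quotients). Your write-up is a fleshed-out version of that argument: the collapse of the two-step quotient via $\sim_{\mathrm{thin}}\subseteq\sim_A$ together with transitivity of $A$-homotopy is exactly the content the paper leaves to the reader.
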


\begin{proof} The proof is straightforward and it is a consequence of combining the following facts: any $A$-path is thin homotopic to its reparametrization and thin homotopies are genuine homotopies.

\end{proof}

The Weinstein $2$-groupoid can be used also to recover the fundamental groups $\pi_2(A,m)$ defined in \cite{BZ}, as the isotropies in $P_2(A)$ at identities. Namely, for every $m\in M$ we have

 $$\pi_2(A,m) =\set{\sigma\in P_2(A)\ |\   \sigma:0_mdt\Rightarrow 0_mdt}.$$

\begin{remark}
 In this work, we will not define a differential structure on $\tPa(A)$, hence neither shall we differentiate a representation of $\tPa(A)$ on a $2$-term complex $\E$ to a $2$-term representation of $A$. This was done in the case $A=TM$  by \cite{SW} in a slightly different context (see the precise relation in \cite{CaFl}) where a similar path $2$-groupoid of a smooth manifold is constructed, as well as a smooth structure on it. 

 In \cite{SW}, the smooth structure is defined in terms of  diffeological spaces, and it seems clear that their construction extends to the case of an integrable Lie algebroid. It would be interesting to see if the diffeological structure is also well defined in the non integrable case. 

 Additionally, note that the space of $2$-morphisms defined in this work is slightly smaller than that of \cite{SW, CaFl} since we take quotients by $3$-homotopies. The basic reason is to obtain a $2$-groupoid as small as possible for $2$-term representations. In the terminology of \cite{SW}, we essentially deal here with \emph{flat} $2$-connections (meaning that the curvature $3$-form vanishes \eqref{eq:ruh:omegaclosed}) and hence it is natural to mod out the useless information (see the Lemma \ref{lem:3homotopyindependant}).
\end{remark}

\subsection{The gauge \texorpdfstring{$2$}{2}-groupoid of a \texorpdfstring{$2$}{2}-term complex}

Let $M$ be a smooth manifold. According to Proposition \ref{Dold-Kan}, the category $\tVec_M$ of $2$-vector bundles over $M$ is equivalent to the category $\tTerm_M$ of $2$-term complexes of vector bundles over $M$. In this section we study symmetries of $2$-vector bundles via their description as a $2$-term complex. Just as a vector bundle has a groupoid of symmetries, namely its gauge groupoid, a $2$-vector bundle has a \emph{$2$-groupoid} of symmetries, its gauge $2$-groupoid.

Let $\mathcal{E}=(C\xrightarrow{\d} E)$ be a $2$-term complex of vector bundles over $M$. The \textbf{gauge 2-groupoid} of $\mathcal{E}$ is defined as the strict $2$-groupoid $2\text{-Gau}(\E)$ whose space of objects is $M$, and
\begin{itemize}
\item \textbf{1-morphisms:} a $\mathbf{1}$-morphism $A:x\overset{}{\to} y $ between objects $x,y\in M$ is an invertible chain map. That is, a couple of invertible linear maps $A=(\AC,\AE)$ where $\AC\in\Iso(C_x,C_y)$, $\AE\in\Iso(E_x, E_y)$ and such that 
$\AE\circ\, \partial_x=\d_y\circ \AC$. The composition and inverses of chain maps is
\begin{align*} \bigl(\BC,\BE\bigr)\cdot \bigl(\AC,\AE\bigr)  &=  \bigl(\BC\circ \AC,\BE\circ \AE\bigr) \\
               \bigl(\AC,\AE\bigr)^{-1}                      &=  \bigl((\AC)^{-1}, (\AE)^{-1}\bigr),\end{align*}

\item \textbf{2-morphisms:} for any objects $x,y\in M$ and $1$-morphisms $A_0,A_1:x\to y$, a $\mathbf{2}$-morphism between $A_0$ and $A_1$ is given by a chain homotopy $\phi:A_0{\Rightarrow} A_1$, \emph{i.e.} a linear application $\phi\in \text{ Hom}( E_{x} , C_{y})$ satisfying:
\begin{align*}
\AC_1-\AC_0&= \phi\circ\partial, \\
\AE_1-\AE_0&=\partial\circ \phi,\end{align*}

\item \textbf{vertical composition:} given $2$-morphisms $A_0\overset{\phi}{\Rightarrow} A_1\overset{\psi}{\Rightarrow} A_2$ where $A_0,A_1,A_2$ are $1$-morphisms $x\to y$, the vertical composition between $\phi$ and $\psi$ is the $2$-morphism $\psi\circV\phi:A_0{\Rightarrow} A_2$ defined by:
$$\psi\circV\phi=\psi+\phi,$$

\item \textbf{vertical unit:} for any $1$-morphism $A:x \to y$, the vertical unit at $A$ is the $2$-morphism $1^\textbf{V}_A:A\Rightarrow A$ given by $1^\textbf{V}_A=0 \in \mathrm{Hom}(E_x,C_y)$,

\item \textbf{vertical inverse:} given a $2$-morphism $\phi:A_0\Rightarrow A_1 \in \Hom(E_x,C_y)$, the vertical inverse of $\phi$ is the $2$-morphism $\phi^{-1^\textbf{V}}:A_1\Rightarrow A_0 $ given by $\phi^{-1^\textbf{V}}:=-\phi$
 
\item \textbf{horizontal composition:} given $2$-morphisms $A_0\overset{\phi}{\Rightarrow} A_1$, $A'_0\overset{\phi'}{\Rightarrow} A'_1$ where $A_0, A_1:x\to y$ and $A'_0,\ A'_1:y\to z$ are $1$-morphisms, the horizontal composition
$\phi'\circH\phi:A'_0\circ A_0\Rightarrow A'_1 \circ A_1$ is given by:
$$\phi'\circH \phi =\phi'\circ A_0+A'_1\circ\phi.$$

\item \textbf{horizontal unit:} for any $x\in M$, the horizontal unit $1^\textbf{H}_x$ is the $2$-morphism $1^\textbf{H}:\id\Rightarrow \id$ given by the trivial morphism $1^\textbf{H}:=0\in \Hom(E_x,C_x)$,

\item \textbf{horizontal inverse:} given a $2$-morphism $\phi:A_0\Rightarrow A_1 \in \Hom(E_x,C_y)$, the horizontal inverse of $\phi$ is the $2$-morphism $\phi^{-1^\textbf{H}}:A_0^{-1}\Rightarrow A_1^{-1} $ given by:
$$  \phi^{-1^\textbf{H}}:=-A_1^{-1}\circ\phi\circ A_0^{-1}\in \Hom(E_y,C_x). $$ 
\end{itemize}

It is easy to check that $2\text{-Gau}(\E)$ indeed defines a strict $2$-groupoid. This is left to the reader.

\begin{remark}\label{rem:gauge:auto}
It is easily seen by adapting the results Baez and Crans \cite{BaezCrans6}  that $2\text{-Gau}(\E)$ is isomorphic  to the strict $2$-groupoid $\tGau(\K)$ of linear automorphisms of $\K=\pE^*C$, where $\K$ is seen as a $2$-vector bundle by Proposition  \ref{prop:int:kernel}. Here $\tGau(\K)$ is obtained by taking objects to be points in $M$, $1$-morphisms to be invertible linear functors $\K_x\to \K_y$, and $2$-morphisms to be linear natural transformations between invertible linear functors.

Therefore one can of think of $\tGau(\E)$ as a \emph{frame $2$-groupoid} for the $2$-vector bundle $\K$.  Since our main concern is with $2$-term representations up to homotopy, we will work with $\tGau(\E)$, rather than $\tGau(\K)$, although the latter would be more natural from the point of view of fibrations. 
\end{remark}

\begin{remark}
In the special case when $\d:C\to E$ has constant rank, both spaces of $1$-morphisms and of $2$-morphisms of $\tGau(\E)$ come with an obvious structure of a (finite dimensional) smooth manifold. Furthermore, all the structure maps are easily seen to be smooth. In that case, $\tGau(\E)$ is a Lie $2$-groupoid. This assumption, however, is quite restrictive in view of the applications. For instance, one of the main motivations for introducing representations up to homotopy was to get a model for the adjoint and coadjoint representations of a Lie algebroid $A\to M$. In that case, $\d=\rho_A:A\to TM$ coincides with the anchor map and therefore it has not constant rank unless $A$ is regular.

Since this condition is irrelevant for the algebraic constructions of the next sections to hold, we shall not assume that $\partial:C\to E$ has constant rank.
\end{remark}


\subsection{The holonomy $2$-representation}     

Let $\G\rightrightarrows M$ be a Lie groupoid. A representation of $\G$ consists on a vector bundle $E\to M$ equipped with a linear action of $\G$, i.e. a groupoid morphism $\G\to \text{Gau}(E)$. If one moves to $2$-groupoids, 
the corresponding notion of linear action is that of a $2$-representation.

\begin{definition}\label{def:2rep}
Let 2-$\G$ be a $2$-groupoid as in \eqref{tgroupoid}. A \textbf{$2$-representation} of 2-$\G$ consists of a $2$-term complex $\E=(\partial: C\to E)$, equipped with a strict $2$-functor $\Phi: 2\text{-}\G\to \tGau(\E)$.
\end{definition}

More precisely, if $\G^{(2)}, \G^{(1)}$ and $M$ denote the spaces of $2$-morphisms, $1$-morphisms and objects of 2-$\G$, respectively, then a $2$-representation is given by the following assignment:
\begin{itemize}
\item to any $1$-morphism $g:x\to y$, one associates an invertible chain map $\Phi_g=(A^C_g,A^E_g)$:
$$\xymatrix{C_x \ar@{->}[r]^{\partial_x}\ar[d]_{A^C_g}&E_x  \ar[d]^{A^E_g}\\
             C_y \ar@{->}[r]^{\partial_y}&E_y }$$
in such a way that $\Phi_{gh}=\Phi_g\circ \Phi_h$, for every composable pair of $1$-morphisms $g,h\in \G^{(1)}$,

\item to any $2$-morphism $\sigma\in \G^{(2)}$ with $\s_V(\sigma)=g$, $ \t_V(\sigma)=h$, $\s_H(\sigma)=x$ and $\t_H(\sigma)=y$, one associates a chain homotopy $\Phi_\sigma\in \Hom(E_y,C_x)$ between the chain maps $\Phi_g$ and $\Phi_h$.  The assignment $\sigma \mapsto \Phi_\sigma$ is compatible with both the vertical and horizontal composition, as well as with inverses.         
\end{itemize}




Note that a usual representation $E\to M$ of a Lie groupoid $\G\tto M$ can be thougth of as a $2$-representation of the $2$-groupoid $2$-$\G$ (objects are points of $M$, the space of $1$-morphisms is $\G$ and the space of $2$-morphisms is also $\G$) on the $2$-term complex $0:E\to E$.

A $2$-term representation up to homotopy $(\nablaE,\nablaC,\omega)$ of a Lie algebroid $A$ on a $2$-term complex $\E=(\partial:C\to  E)$ gives rise to a $2$-representation of the Weinstein $2$-groupoid $\tPa(A)$ on $\E$. Indeed, in that situation one defines the corresponding \textbf{holonomy}, denoted by $\hol$, first as an assignment, in the following way:
\begin{itemize}                                                                                           
\item to any $A$ path $adt:TI\to A$ from $x$ to $y$, the holonomy associates the couple: 
$$\hol(a):=\left(\holC_a, \holE_a\right)$$
 where ${\holC_a}:C_{x} \xrightarrow{} C_{y}$ (respectively                                                                                                                          
       ${\holE_a}:E_{x}     \xrightarrow{} E_{y}$) denotes the holonomy of $\nablaC$ (respectively $\nablaE$) along $a$,                                     
\item to any $A$-homotopy $\sig=adt+bds:TI^2\to A$, the holonomy associates the application $\hol(\sig)\in\Hom(E_{x},C_{y})$ defined by:                                                                                                
\begin{equation}\label{eq:holonomy-homotopy}                                                                                                                                          
\hol(\sig)=\int_0^1 \!\!\! \int_0^1 \holC_{a_{1,t}^s}\circ\  \omega(a,b)_{\gamma_{t}^{s}}\circ \holE_{a_{t,0}^s} dt\, ds.                           
\end{equation}
\end{itemize}
Here, the notations in \eqref{eq:holonomy-homotopy} are detailed in  Appendix \ref{sec:thm:proof}.  We can now state one of the central results of this work.
\begin{theorem}\label{thm:2functor}
Let $(\nablaE,\nablaC,\omega)$ be a representation up to homotopy of a Lie algebroid $A$ on a $2$-term complex of vector bundles
$\E=(\partial:C\to E)$. The holonomy defined above descends to a strict $2$-functor $\hol:\tPa(A)\to \tGau(\E)$
covering the identity on $M$.
\end{theorem}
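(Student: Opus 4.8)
The plan is to verify, one axiom at a time, that $\hol$ satisfies all the requirements of a strict $2$-functor covering $\id_M$. On objects it is manifestly the identity, so what remains is to show that $\hol(a)$ is a well-defined invertible chain map, functorial under concatenation and depending only on the thin homotopy class of $a$; that $\hol(\sigma)$ is a well-defined chain homotopy depending only on the $3$-homotopy class of $\sigma$; and that $\hol$ intertwines both compositions, units and inverses. For the $1$-morphisms, invertibility of $\holE_a$ and $\holC_a$ is automatic since parallel transport is an isomorphism; that $(\holC_a,\holE_a)$ is a chain map, $\holE_a\circ\partial=\partial\circ\holC_a$, follows because $t\mapsto\holE_{a_{t,0}}\circ\partial$ and $t\mapsto\partial\circ\holC_{a_{t,0}}$ solve the same linear transport equation with the same initial value, by the compatibility \eqref{def:ruh1}. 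Functoriality $\hol(b\cdot a)=\hol(b)\circ\hol(a)$ is the multiplicativity of parallel transport under concatenation, and the unit path has trivial holonomy.

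The analytic heart of the proof is a variation formula for holonomy along a family of $A$-paths. Given an $A$-homotopy $\sigma=a\,dt+b\,ds$, which by definition is a Lie algebroid morphism $TI^2\to A$, I would establish the Duhamel-type identity
$$\frac{d}{ds}\,\holE_{a^s_{1,0}}=\int_0^1 \holE_{a^s_{1,t}}\circ\omegaE(a,b)\circ\holE_{a^s_{t,0}}\,dt,$$
together with its analogue for $\holC$ with $\omegaC$. The Lie algebroid morphism equation for $\sigma$ is what reduces the transverse variation of the transport operator to the curvature evaluated on $(a,b)$, while the boundary condition $b|_{\{t=0,1\}}=0$ kills the endpoint contributions. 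This already gives thin homotopy invariance of the $1$-morphism assignment: if $\sigma$ is thin then $a\wedge b=0$, so $\omega(a,b)=0$, whence $\omegaE(a,b)=\partial\circ\omega(a,b)=0$ and $\omegaC(a,b)=\omega(a,b)\circ\partial=0$, and both holonomies are independent of $s$.

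Next I would verify that $\hol(\sigma)$ is a chain homotopy from $\hol(\sV\sigma)$ to $\hol(\tV\sigma)$. Integrating the variation formula over $s\in[0,1]$ gives
$$\holE_{a^1}-\holE_{a^0}=\int_0^1\!\!\int_0^1 \holE_{a^s_{1,t}}\circ\omegaE(a,b)\circ\holE_{a^s_{t,0}}\,dt\,ds.$$
Substituting $\omegaE=\partial\circ\omega$ and pushing $\partial$ to the left through $\holE_{a^s_{1,t}}$ via the chain-map identity $\holE\circ\partial=\partial\circ\holC$ turns the right-hand side into $\partial\circ\hol(\sigma)$, which is exactly the first chain-homotopy equation. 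Symmetrically, using $\omegaC=\omega\circ\partial$ together with $\partial\circ\holC=\holE\circ\partial$ produces $\holC_{a^1}-\holC_{a^0}=\hol(\sigma)\circ\partial$. Hence $\hol(\sigma)$ is a genuine $2$-morphism of $\tGau(\E)$.

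Finally, compatibility with the compositions is obtained by splitting the domain of integration. Horizontal concatenation $\sigma'\circH\sigma$ cuts the $t$-interval at $1/2$; splitting the $t$-integral and factoring the transports through the midpoint by multiplicativity reproduces the gauge formula $\hol(\sigma')\circ\hol(\sV\sigma)+\hol(\tV\sigma')\circ\hol(\sigma)$. Vertical concatenation cuts the $s$-interval, and since the integrand is additive in $s$ this yields $\hol(\sigma')+\hol(\sigma)=\hol(\sigma')\circV\hol(\sigma)$; the unit and inverse cases are immediate from the integral formula. It remains to see that $\hol(\sigma)$ depends only on the $3$-homotopy class of $\sigma$, the content of Lemma \ref{lem:3homotopyindependant}: running the same variation argument one dimension higher shows that the $u$-derivative of $\hol$ along a $3$-homotopy is governed by $\nabla\omega$, which vanishes by \eqref{eq:ruh:omegaclosed}. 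I expect the main obstacle to be precisely the proof of the variation/Duhamel formula and its $3$-homotopy counterpart, that is, handling the transport ODEs in families, correctly invoking the Lie algebroid morphism equation for $\sigma$ (resp. the $3$-homotopy $H$), and controlling the boundary terms; once this formula is established the remaining verifications are essentially bookkeeping.
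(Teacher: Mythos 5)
Your proposal follows essentially the same route as the paper's Appendix~A: the chain-map property of $\hol(a)$ from the transport ODE and \eqref{def:ruh1}, the Nijenhuis/Duhamel variation formula as the key analytic lemma (this is exactly Lemma \ref{lem:curvature:global}), thin-homotopy invariance and the chain-homotopy identities for $\hol(\sigma)$ by integrating it in $s$ and using $\omegaE=\partial\circ\omega$, $\omegaC=\omega\circ\partial$, additivity in $s$ for vertical composition, and $3$-homotopy invariance from $\nabla\omega=0$.

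The one step you underestimate is the horizontal composition. Splitting the $t$-integral at $1/2$ and using multiplicativity of transport does \emph{not} directly yield the gauge formula: it produces two terms of the form $\int\!\!\int \holC_{a'^{s}_{1,0}}\circ\holC_{a^s_{1,t}}\circ\omega(a,b)\circ\holE_{a^s_{t,0}}$ and $\int\!\!\int \holC_{a'^{s}_{1,t'}}\circ\omega(a',b')\circ\holE_{a'^{s}_{t',0}}\circ\holE_{a^{s}_{1,0}}$, in which the prefactor $\holC_{a'^{s}_{1,0}}$ (resp.\ the postfactor $\holE_{a^{s}_{1,0}}$) still depends on the integration variable $s$, whereas the target expression $\holC_{a'_0}\circ\hol(\sigma)+\hol(\sigma')\circ\holE_{a_1}$ involves these transports only at the endpoints $s=0$ and $s=1$. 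The paper resolves this by applying the variation formula a second time to expand $\holC_{a'^{s}_{1,0}}$ about $s=0$, pushing $\partial$ through with the chain-map identity, and exchanging the order of integration; the cross term produced exactly cancels against the second integral. So this verification is not ``essentially bookkeeping'' but the longest computation in the proof; the ingredients you need (the Duhamel formula and the chain-homotopy relations) are already in your plan, so the gap is one of execution rather than of approach.
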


For clarity, the proof of Theorem \ref{thm:2functor} is postponed to Appendix \ref{sec:thm:proof}. 

\begin{definition}\label{def:2holonomy} The $2$-functor $\hol:\tPa(A)\to 2\text{-}\Gau(\E)$ is referred to as the \textbf{holonomy $2$-representation} associated to the $2$-term representation up to homotopy $(\nablaE, \nablaC, \omega)$ of $A$ on $\E=(\d:C\to E)$.
\end{definition}

\begin{remark}
 If $A=TM$, both $E=M\times \R^n$ and $C=M\times \R^k$ are trivial vector bundles and $\partial:C\to E; (x,v)\to (x,f(v))$ where $f:\R^n\to \R^k$ is a linear map, Theorem \ref{thm:2functor} recovers the work of Schreiber and Waldorf \cite{SW} (from which  Theorem \ref{thm:2functor} was directly inspired). An alternative proof should also follow from the integration procedure of Arias Abad and Sch\"{a}tz 
 \cite{CA01} (see also \cite{CaFl} that relates both approaches).

 
  Note also that working directly with the Weinstein $2$-groupoid avoids, after pulling back the structure along a morphism
$TI^2\to A$, the choice of a trivialization in order to express the local connection forms as in \cite{SW2} (although the gluing of $2$-functors is a quite remarkable feature).
\end{remark}


\subsection{Transformation $2$-groupoid associated to a $2$-representation}
Given a $2$-representation $\Phi:2\text{-}\G\to \tGau(\mathcal{E})$ of a $2$-groupoid, there exists a transformation object $2\text{-}\G\ltimes \E$, whose outcome is a $2$-groupoid. Since we are mainly interested in holonomy $2$-representations as in Theorem \ref{thm:2functor}, we shall present this construction only in that case, although it is easily seen to make sense in general.

For that, consider a Lie algebroid $A\to M$ and let $\tPa(A)$ the Weinstein $2$-groupoid of $A$, defined in Section \ref{sec:Weinsteintwogroupoid}. Fix a $2$-term representation up to homotopy $(\nablaE,\nablaC,\omega)$ of a $A$ on a $2$-term complex $\E=(\d:C\to E)$. Let $\hol:\tPa(A)\to 2\text{-}\Gau(\E)$ be the corresponding holonomy $2$-representation given by Theorem \ref{thm:2functor}.

\begin{definition}\label{2semidirect}
The {\bf transformation $2$-groupoid} associated to $\hol:\tPa(A)\to 2\text{-}\Gau(\E)$ is the strict $2$-groupoid $\tPa(A)\ltimes \E$ defined as follows:
\begin{itemize}
\item \textbf{objects:} the space of objects of $\tPa(A)\ltimes \E$ is given by $E$,
\item \textbf{1-morphisms:} the space of $1$-morphisms of $\tPa(A)\ltimes \E$ is:
                  $$P_1(A)\ltimes \E:= \t^*C\oplus \s^* E,$$
                  where $\s,\t:P_1(A)\to M$ are the source and target maps of $1$-morphisms of the Weinstein $2$-groupoid. The source, target, and inverse maps of $1$-morphisms are given as follows:
\begin{align*} 
\ts(c,a,e)&=e,\\ 
\tit(c,a,e)&=\holE_{a}(e)+\d c\\
(c,a,e)^{\bf -1}&=\left(-\holC_{a^{-1}}(c),a^{-1},\holE_a(e)+\d c\right),
\end{align*}
\noindent two $1$-morphisms $(c_1,a_1,e_1)$ and $(c_0,a_0,e_0)$ are composable provided $e_1=\holE_{a_0}(e_0)+\d c_0$ and
their multiplication is given by:
\begin{align*}
(c_1,a_1,e_1)\cdot(c_0,a_0,e_0)=\left( c_1+\holC_{a_1}(c_0),a_1\cdot a_0,e_0\right),
\end{align*}
\item \textbf{2-morphisms:} the space of $2$-morphisms in $\tPa(A)\ltimes \E$ is given by:
$$P_2(A)\ltimes \E:= \tH^*C \oplus \sH^*E, $$
where $\sH,\tH:P_2(A)\to M$ are the horizontal source and target maps of the Weinstein $2$-groupoid,

\item \textbf{vertical structure maps:} the vertical source and target maps $\tsV,\ttV: P_2(A)\ltimes \E\to P_1(A)\ltimes \E$ are given by:
\begin{align*}
\tsV(c,\sig,e)&:=(c,\sV(\sig),e)\\
\ttV(c,\sig,e)&:=(c-\hol(\sig)_e,\tV(\sig),e).
\end{align*}
Two $2$-morphisms $(c_2,\sig_2,e_2)$ and $(c_1,\sig_1,e_1)$ are composable vertically provided
$\sV(\sig_2)=\tV(\sig_1)\in P_1(A)$ and $c_2=c_1+\hol(\sig_1)(e_1) $, and their vertical multiplication is given by:

$$\bigl(c_2,\sig_2,e_2\bigr)\circV\bigl(c_1,\sig_1,e_1\bigr)  =  \bigl(c_1, \sig_2\circV \sig_1, e_1\bigr).$$

\noindent The vertical inverse of a $2$-morphism $(c,\sigma,e)$ is defined by the following formula:

$$\bigl(c,\sig,e\bigr)^{-{\bf 1}_{\bf V}}                           =  \bigl( c-\hol(\sig)_e,\sig^{-{\bf 1}_{\bf V}},e\bigr)$$

\item \textbf{horizontal structure maps:} the horizontal source and target maps are given by:
\begin{align*}\tsH(c,\sig,e)&=e,\\
              \ttH(c,\sig,e)&=\hol_{\tV(\sig)}(e)+\partial c, 
\end{align*}

\noindent The horizontal composition and horizontal inverse are given by:
\begin{align*}
\bigl(c,\sig,e\bigr)\circH(b,\tau,f)  &=  \bigl(c+\holC_{\sV(\sig)}(b), \sig\circH \tau ,f \bigr)\\
\bigl(c,\sig,e\bigr)^{-{\bf 1}_{\bf H}}               &=  \bigl(\holC_{\sV(\sig)}(c),  \sig^{-{\bf 1}_{\bf H}}, e \bigr).  
\end{align*}
\end{itemize}
\end{definition}

The next result is a direct consequence of the definition of $\tPa(A)$.

\begin{theorem}\label{thm:semidirect2groupoid}
Let $A$ be a Lie algebroid over $M$. If $\hol:\tPa(A)\to 2\text{-}\Gau(\E)$ is the holonomy $2$-representation associated to a $2$-term representation up to homotopy of $A$, then $\tPa(A)\ltimes \E$ described above is a strict $2$-groupoid.
\end{theorem}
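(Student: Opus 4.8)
The plan is to verify directly that all the structure maps listed in Definition \ref{2semidirect} satisfy the axioms of a strict $2$-groupoid, using as the essential input the fact that $\hol:\tPa(A)\to\tGau(\E)$ is already a strict $2$-functor (Theorem \ref{thm:2functor}). The key observation is that the transformation $2$-groupoid $\tPa(A)\ltimes\E$ is built by the standard ``action groupoid'' recipe applied at two levels simultaneously: its $1$-morphisms mimic the semidirect product $\t^*C\oplus\s^*E$ governed by the chain-map part $(\holC_a,\holE_a)$ of $\hol$, while its $2$-morphisms are governed by the chain-homotopy part $\hol(\sigma)$. Since these are precisely the data of a $2$-functor into $\tGau(\E)$, compatibility of $\hol$ with compositions, units and inverses at each categorical level is exactly what forces the corresponding transformation structures to be well defined.

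First I would check the $1$-morphism groupoid $P_1(A)\ltimes\E=\t^*C\oplus\s^*E$ over $E$. Here one verifies that $\ts$ and $\tit$ are compatible with the stated composition, that the composition is associative, and that units $(0,\mathbf 1_x,e)$ and the inverse formula behave correctly. Associativity and the inverse identities reduce to the functoriality relation $\holC_{a_1\cdot a_0}=\holC_{a_1}\circ\holC_{a_0}$ together with $\holE_{a_1\cdot a_0}=\holE_{a_1}\circ\holE_{a_0}$, which hold because $\hol$ is a $2$-functor and composition of $1$-morphisms in $\tGau(\E)$ is composition of chain maps; the chain-map condition $\holE_a\circ\partial=\partial\circ\holC_a$ guarantees that $\tit$ is well defined. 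This step is essentially the classical computation showing that a groupoid representation yields an action groupoid.

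Next I would verify that $P_2(A)\ltimes\E=\tH^*C\oplus\sH^*E$ is simultaneously a groupoid over $P_1(A)\ltimes\E$ (via $\tsV,\ttV$ and $\circV$) and a groupoid over $E$ (via $\tsH,\ttH$ and $\circH$). The vertical composition law $(c_2,\sig_2,e_2)\circV(c_1,\sig_1,e_1)=(c_1,\sig_2\circV\sig_1,e_1)$ is consistent precisely because $\hol$ respects vertical composition, which in $\tGau(\E)$ is addition of chain homotopies: the composability condition $c_2=c_1+\hol(\sig_1)(e_1)$ together with the additivity $\hol(\sig_2\circV\sig_1)=\hol(\sig_2)+\hol(\sig_1)$ makes $\ttV$ and the composition mutually consistent. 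Similarly the horizontal composition uses $\hol(\sig\circH\tau)=\hol(\sig)\circH\hol(\tau)=\hol(\sig)\circ A_0+A_1'\circ\hol(\tau)$, matching the formula in Definition \ref{2semidirect}. The vertical and horizontal inverse formulas are checked against the corresponding inverse formulas in $\tGau(\E)$.

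The main obstacle, and the step deserving genuine care rather than routine bookkeeping, is the interchange (exchange) law $(\sigma_4\circV\sigma_3)\circH(\sigma_2\circV\sigma_1)=(\sigma_4\circH\sigma_2)\circV(\sigma_3\circH\sigma_1)$ for the $\E$-decorated $2$-morphisms. Both the base $\tPa(A)$ and the target $\tGau(\E)$ satisfy interchange, so the plan is to reduce the interchange law in $\tPa(A)\ltimes\E$ to these two via the fact that $\hol$ intertwines both compositions; concretely, one expands both sides using the explicit formulas for $\circV$ and $\circH$, and shows the $C$-components agree by invoking interchange in $\tGau(\E)$ applied to the four chain homotopies $\hol(\sigma_i)$ and the intermediate chain maps, while the $\tPa(A)$-components agree by interchange in the Weinstein $2$-groupoid. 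I expect the arithmetic of tracking the various $\holC$ and $\holE$ prefactors and base points to be the only delicate point, but since it is purely a matching of the two known interchange laws through the $2$-functor $\hol$, no new phenomenon arises and the verification closes.
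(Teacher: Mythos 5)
Your proposal is correct and coincides with what the paper intends: the paper offers no written proof (it simply declares the theorem ``a direct consequence of the definition''), and the verification it leaves to the reader is exactly your plan, namely a direct check of the groupoid axioms at each level using the strict $2$-functoriality of $\hol$ from Theorem \ref{thm:2functor}, with the interchange law reduced to the interchange laws of $\tPa(A)$ and $\tGau(\E)$. The only caveat is that you inherit the paper's own sign conventions in the vertical composability condition and in $\ttV$, which are not entirely consistent as printed; this is an issue with the source, not with your argument.
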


\begin{remark}\label{vb2groupoid}
 Quite interestingly, one can think of the semi-direct product in Definition \ref{2semidirect} as a (strict) analogue of the construction of a Grothendieck fibration (with cleavage) associated  with a pseudo-functor. In fact, one may see a \vbg as a special kind of Grothendieck fibration, for which the choice of a horizontal lift gives a cleavage (so that the corresponding groupoid $2$-representation is indeed a special kind of a pseudo-functor).

Part of this work was motivated by the fact that this construction may not be the most natural from the point of view of \vbas and \vbgs, essentially because the splittings of a \vba do not integrate canonically to horizontal lifts of the corresponding \vbg (as we shall see in Remark \ref{rem:splitingsdonotintegrate}).

There is yet another feature enjoyed by the semi-direct product $\tPa(A)\ltimes \E$ that also pleads in the favour of our construction, namely the fact that $\tPa(A)\ltimes \E$ is a $2$-\vbg over $\tPa(A)$ in the following sense. One may first notice that both the spaces of $1$-morphisms and of $2$-morphisms in $\tPa(A)\ltimes \E$ are \vbgs over $P_2(A)$ and $P_1(A)$ respectively. Both even come with a left horizontal splitting by construction, for which the curvature $\Omega$ vanishes. Then taking into account the $2$-groupoid structures on  $\tPa(A)$ and $\tPa(A)\ltimes \E$, we obtain a diagram as follows:
$$\begin{gathered}\xymatrix{\tPa(A)\ltimes \E\ar[d]\\ \tPa(A)}\end{gathered}\quad\quad
  \begin{gathered}\xymatrix@C=20pt@R=20pt{
           P_2(A)\ltimes \E \ar@<0.4ex>[rr]^{\tsV,\ttV}
                  \ar@<-.4ex>[rr]  \ar@<.4ex>[ddr]
                                                \ar@<-.4ex>[ddr]^<<<<<<<{\tsH,\ttH}
                                                 \ar[d] &             & P_1(A)\ltimes \E   \ar@<0.4ex>[ddl]_<<<<<<<{\ts,\tit}
                                                                                       \ar@<-.4ex>[ddl] \ar[d]\\
           P_2(A) \ar@<0.4ex>[rr]|(.24)\hole^-{\sV,\tV}|(.75)\hole
                  \ar@<-.4ex>[rr]|(.268)\hole^-{}|(.73)\hole \ar@<0.4ex>[ddr]
                                             \ar@<-.4ex>[ddr]_-{\sH,\tH} &             & P_1(A)\ar@<0.4ex>[ddl]^-{\s,\t}
                                                                                       \ar@<-.4ex>[ddl]                                                                                                     \\
                                                              &   E  \ar[d]\\
                                                              &   M
                             }\end{gathered}$$

\noindent where the upper and lower triangles are strict $2$-groupoids, all vertical maps are vector bundles, the three faces are (canonically split, flat) \vbgs, and all structures are compatible with each other in the obvious sense.
\end{remark}




\section{Application to the integration of \texorpdfstring{\vbas}{VB-algebroids}}

\subsection{\vbgs as $1$-truncations}

Consider a Lie algebroid $A\to M$ together with a $2$-term representation up to homotopy $(\nablaE,\nablaC,\omega)$ on a complex $\E$, with holonomy $2$-representation $\hol:\tPa(A)\to 2\text{-}\Gau(\E)$. Let $D$ be the \vba over $A$ associated to such $2$-term representation up to homotopy.

The following result shows how to pass from the transformation $2$-groupoid $\tPa(A)\ltimes \E$ to the Weinstein groupoid of $D$.

\begin{theorem}\label{thm:integrationmodhomotopies}
The Weinstein groupoid $\G(D)$ of $D$ identifies with the $1$-truncation of the transformation $2$-groupoid $\tPa(A)\ltimes \E$ associated to the holonomy $2$-representation.
 
 \end{theorem}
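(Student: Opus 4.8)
The plan is to identify both sides explicitly and exhibit a canonical isomorphism of groupoids over $E$. By Proposition \ref{prop:weinsteintruncation} the $1$-truncation construction sends the Weinstein $2$-groupoid to the Weinstein groupoid, so the natural first move is to unwind what the $1$-truncation of $\tPa(A)\ltimes \E$ is concretely. Recall that the $1$-truncation identifies two $1$-morphisms whenever there is a $2$-morphism between them. I would therefore start from the space of $1$-morphisms $P_1(A)\ltimes \E=\t^*C\oplus\s^*E$ and quotient by the equivalence relation generated by the vertical source and target maps $\tsV,\ttV$ of Definition \ref{2semidirect}. Explicitly, $(c_0,a_0,e)$ and $(c_1,a_1,e)$ are identified precisely when there is a $2$-morphism $(c,\sig,e)$ with $\sV(\sig)=a_0$, $\tV(\sig)=a_1$ and $c_1=c_0-\hol(\sig)_e$. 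The key observation is that this is exactly the relation induced on the first factor by $\hol(\sig)$, with the underlying $A$-paths $a_0,a_1$ being identified in $\G(A)=P_1(A)/P_2(A)$.

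The heart of the argument is then to compute this quotient and compare it with $\G(D)$. On the side-groupoid $P_1(A)$ the $2$-morphisms collapse $A$-paths to their $A$-homotopy classes, yielding $\G(A)$ as the base. The remaining content lives in the $\t^*C$ factor: I would show that, for fixed endpoints in $\G(A)$, the $2$-morphisms act on $c\in C$ by subtracting all values $\hol(\sig)_e$ as $\sig$ ranges over $A$-homotopies between representatives. Using formula \eqref{eq:holonomy-homotopy} together with the holonomy of the Lie algebroid fibration $D\twoheadrightarrow A$ from Proposition \ref{prop:vbafibration} and the relation \eqref{eq:holonomies}, these values are precisely the obstructions measured by the second transgression map $\delta_2$; by Theorem \ref{prop:longexactfibrationvbacase} they are what one must quotient $C$ by in order to pass from $D$-paths modulo $\K$-homotopy to genuine $D$-homotopy classes. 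Matching the multiplication in Definition \ref{2semidirect} against the groupoid multiplication of $\G(D)$ completes the identification of structure maps.

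Concretely, the cleanest route is to build the isomorphism directly at the level of $D$-paths. Since $D=A\oplus E\oplus C$ is split, a $D$-path projects to an $A$-path $adt$ and carries fiber data in $\K=\pE^*C$; by Remark \ref{rem:int:kpaths} the $\K$-part of any $D$-path is $\K$-homotopic to a straight path determined by its integral $\int_0^1 c(s)ds$. I would send the $D$-homotopy class of such a path to the $1$-morphism $(c,a,e)$ in $P_1(A)\ltimes\E$ with $c$ this integral, and check that this descends to a well-defined map on $\G(D)$ and intertwines the groupoid operations. The compatibility of parallel transport in $\K$ with the linear holonomies $\holC,\holE$, recorded in \eqref{eq:holonomies}, is what guarantees that source, target, and inverse match Definition \ref{2semidirect}.

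The main obstacle I anticipate is verifying that the two quotients coincide rather than merely admitting a comparison map: one must show that $D$-homotopies and $2$-morphisms of $\tPa(A)\ltimes\E$ generate exactly the same equivalence relation on $1$-morphisms. In one direction a $3$-homotopy of $A$-homotopies must be shown to contribute nothing new (the holonomy is $3$-homotopy invariant, which is precisely why $2$-morphisms were taken modulo $3$-homotopy); in the other, a general $D$-homotopy must be reduced, via the straightening of Remark \ref{rem:int:kpaths} applied in the $\K$-direction and via the $A$-homotopy it projects to in the $A$-direction, to the standard form realized by a $2$-morphism $(c,\sig,e)$. Making this reduction precise — showing every $D$-homotopy decomposes into a $\K$-homotopy piece and a piece coming from an $A$-homotopy acted on by the holonomy — is the technical crux, and it is exactly here that the explicit holonomy formula \eqref{eq:holonomy-homotopy} and the fibration structure of Proposition \ref{prop:vbafibration} do the essential work.
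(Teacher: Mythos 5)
Your plan follows essentially the same route as the paper's proof: use the splitting to decompose $D$-paths into their $\K$- and $A$-components, collapse the $\K$-part to $\G(\K)=\pE^*C$ via the straightening of Remark \ref{rem:int:kpaths}, and identify the residual equivalence relation induced by $D$-homotopies with the one generated by the $2$-morphisms of $\tPa(A)\ltimes\E$ through the holonomy. The technical crux you single out --- that every $D$-homotopy decomposes into a $\K$-homotopy piece and an $A$-homotopy contribution measured by $\hol(\sigma)$ --- is precisely the step the paper does not reprove but imports from \cite{Br} (the factorization of the $D$-homotopy relation through $\G(\K)\rtimes P_1(A)$ and the map $\partial_{ext}$); note only that for general $A$-homotopies the relevant object is $\partial_{ext}$ rather than the transgression $\delta_2$, which is its restriction to $A$-spheres.
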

 
More precisely, the groupoid structure on $P_1(A)\ltimes \E\tto E$ descends to an isomorphism
$$\G(D)=\ \quotient{(P_1(A)\ltimes \E)}{\sim}$$
where the groupoid structure on the right hand side is obtained by 1-truncation, as explained in Proposition \ref{1truncation}. The above Theorem says that this quotient can be identified, as a groupoid, with the \emph{topological} Weinstein groupoid $\G(D)$ of the total space $D$ of the corrresponding \vba. Note that this identification is free of choice once the $2$-representation is fixed.

\begin{proof}[of the Theorem \ref{thm:integrationmodhomotopies}] The proof relies on a construction from \cite[Sec. 4]{Br} of which we will only outline the main ideas. Note however that we use here slightly different conventions, this in order to obtain the holonomy as a contravariant $2$-functor.

As detailed in \cite{Br}, given an arbitrary Lie algebroid fibration $D\to A$, the choice of an Ehresman connection $D=\K\oplus \Hor$ induces an identification  of the form
\begin{equation}P_1(D)\simeq P_1(\K)\rtimes P_1(A),\label{eq:splitpaths}
\end{equation}
where  $P_1(\K)\rtimes P_1(A)$ is a short notation for the fibered product
$\{(a_{\K},a)\in P_1(\K)\times P_1(A) : p_E(\t(a_\K))=\s(a) \}$.
By using the identification \eqref{eq:splitpaths} it is possible to describe both the concatenation and the homotopies of $D$-paths directly in $P_1(\K)\rtimes P_1(A)$  (see \cite[Prop.\,4,1 and 4.3]{Br}). With our conventions, the concatenation of $D$-paths reads:
\begin{align}\label{eq:split:concatenation}(a_\K,a)\cdot(b_\K,b)=\bigl(a_\K\cdot (b_\K\circ \holK_a)\,, a\cdot b\bigr).\end{align}
 Furthermore, the following facts emerge from the description of $D$-homotopies in this split form:
\begin{itemize}
\item Firstly, the equivalence relation by $D$-homotopies factors through $\G(\K)\rtimes P_1(A)$ as follows:
 $$\SelectTips{cm}{}\xymatrix@C=-7pt{*+[c]{P_1(\K)\rtimes P_1(A)}\ar@{->>}[rr]\ar@{->>}[dr]& &*+[c]{\G(\K)\rtimes P_1(A)}\ar@{-->>}@<1ex>[dl]\\
                                                                                                                  &*+[c]{ \G(D) }}$$                                                                                                                  
where $\G(\K)\rtimes P_1(A)$ denotes the fibred product $\{(v,a)\in \G(\K)\times P_1(A) \ :\ p_E(\t(v))=\s(a) \}$, and the map $P_1(\K)\rtimes P_1(A) \to \G(\K)\rtimes P_1(A)$ is the identity on the $P_1(A)$ factor, and takes a $\K$-path $a_\K$ to its $\K$-homotopy class $v:=[a_\K]$ of $\K$-path. 

\item Secondly, the factored map $\G(\K)\rtimes P_1(A)\to \G(D)$ is the quotient by the equivalence relation 
$(v_0,a_0) \sim (v_1,a_1)$ iff there exists an $A$-homotopy $\sigma:a_0\Rightarrow a_1$ such that $g_1^{-1}\cdot v_0=\partial_{ext}(\sigma, \s(v_0) )$. Here $\partial_{ext}(\sigma,\s(v_0)) \in \G(\K)$ is represented by a $\K$-path obtained by solving a diferential equation (see below). 

\end{itemize}
 In the case of a fibration induced by a \vba, we know from Proposition \ref{prop:int:kernel} that $\G(\K)=E\oplus C$ so that two elements $(v_0=(e'_0,c_0),a_0)$ and $ (v_1=(e'_1,c_1),a_1)$ in $\G(\K)\rtimes P_1(A)$ are equivalent iff $(c_1-c_0,e_0')=\partial_{ext}(\sigma, e'_0)$ for some $A$-homotopy $\sigma:a_0\Rightarrow a_1$, and $e'_1=\t(\partial_{ext}(\sigma, e'_0))$. Furthermore, by combining the definition of $\partial_{ext}$ in \cite[Prop.\,4.3]{Br} with the Remark \ref{rem:int:kpaths}, we get: $$\partial_{ext}(\sigma, e')=\left(\hol(\sigma)(e), e'\right)\in \G(\K)=\pE^* C, \text{ where } e:=\holE_{a_0^{-1}}(e').$$
 
 Finally, rather than $\G(\K)\rtimes P_1(A)$, one shall work with $P_1(A)\ltimes \E $. Note that both spaces are in bijection by the map $(g=(e',c),a) \mapsto (c,a,e=\hol_a^{-1}(e'))$ so we only have to transport the equivalence relation from 
$\G(\K)\rtimes P_1(A)$ to $P_1(A)\ltimes \E$. We obtain:
 $$(c_0,a_0,e_0)\sim (c_1,a_1,e_1)\iff
 \begin{cases}
 \exists\, (\sig:a_0\Rightarrow a_1)\in P_2(A) : c_1-c_0=\hol(\sig)(e_0),\\ \text{ and } e_0=e_1.
  \end{cases}$$
Therefore, the result follows directly from the construction of the semi-direct $2$-groupoid  $\tPa(A)\ltimes \E$, since there exists a $2$-morphism in $\tPa(A)\ltimes \E$ between $(c_0,a_0,e_0)$ and $(c_1,a_1,e_1)$ exactly if the above condition holds. 

The fact that the groupoid structure also descends is a consequence of the construction of $\tPa(A)\ltimes \E$ as well. Essentially, we transport the concatenation from $P_1(D)$ to $P_1(\K)\rtimes P_1(A)$ obtaining \eqref{eq:split:concatenation} then mod out in the first factor to $\G(\K)\rtimes P_1(A)$ and finally, transport it to $P_1(A)\ltimes \E$, where we recover the composition law of $1$-morphisms. 
\end{proof}

As we will see in Subsections \ref{sec:int:t1} and \ref{sec:int:t0}, Theorem \ref{thm:integrationmodhomotopies} can be used in order to integrate \vbas in a rather explicit manner. For now on, let us point out the following degenerate cases, that justify our statement in the introduction that both the transformation and the semi-direct Lie algebroids should be treated as similar constructions.
\begin{example}
Assume that $C$ is the zero vector bundle over $M$, then necessarily $\omega=0$, and $\nablaE$ is a flat $A$-connection. In that case, we recover the Weinstein groupoid of the transformation algebroid
$A\ltimes E\to E$.
\end{example}
\begin{example}
Assume that $E$ is the zero vector bundle over $M$, then necessarily $\omega=0$, and $\nablaC$ is a flat $A$-connection. In that case, we recover the Weinstein groupoid of the semi-direct product Lie algebroid $A\ltimes C\to M$.
\end{example}

\subsection{Integral criterium for integrability as the image of a transgression map}    

As already mentioned, not every $2$-term representation up to homotopy of an integrable Lie algebroid $A$
comes from a $2$-term representation up to homotopy of the Weinstein groupoid $\G(A)$. This problem was addressed as an integrability problem for the corresponding \vba in \cite{BCO}, where the following result was proved.

\begin{theorem}[\cite{BCO}]\label{thm-integrability:integral}
Let $(\nablaE,\nablaC,\omega)$ be a representation up to homotopy of $A$ on a complex $\E$, and let $D$ denote the corresponding \vba. Then $D$ is integrable if, and only if, $A$ is integrable and for any $[\sigma]\in \pi_2(A)$, $\sigma=adt+bds$, the periods of $\omega$ along $\sigma$ vanish:                       %
$$\int_0^1\!\!\!\int_0^1 \holC_{a_{1,t}^s}\circ\ \omega(a,b)_{\gamma_t^s}\circ \holE_{a^s_{t,0}}(x)dt ds=0.$$                       %
\end{theorem}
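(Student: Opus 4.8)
The plan is to obtain the integral criterion by feeding an explicit description of the transgression map $\delta_2$ into the monodromy characterization of integrability. Concretely, I would chain together three ingredients already available in the excerpt: the monodromy criterion of Theorem \ref{thm:intiff}, the monodromy exact sequence \eqref{seq:exact:monodromy}, and the explicit integration of $\K$-paths from Remark \ref{rem:int:kpaths}. The first step is to apply Theorem \ref{thm:intiff}: $D$ is integrable if and only if $A$ is integrable and $\Mon(D)\cap\ker p=\{0^D_E\}$, where $p\colon D\to A$ is the projection. Since the integrability of $A$ appears verbatim on both sides, the whole problem reduces to translating the monodromy condition $\Mon(D)\cap\ker p=\{0^D_E\}$ into the representation data $(\nablaE,\nablaC,\omega)$.

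Next, fixing $e\in E$ with $m=\pE(e)$, I would read off from the exact sequence \eqref{seq:exact:monodromy} that the kernel of $p\colon\MON(D,e)\to\MON(A,m)$, namely $\MON(D,e)\cap\ker p$, is exactly $\im(\delta_2,e)$. Letting $e$ range over $E$, the condition $\Mon(D)\cap\ker p=\{0^D_E\}$ is therefore equivalent to the vanishing of the transgression map, that is, $\im(\delta_2,e)=\{0\}$ for every $e\in E$.

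The decisive step is then to compute $\delta_2$ explicitly. Using the identification $\G(\K)=\pE^*C$ of Proposition \ref{prop:int:kernel}, the splitting of the holonomy \eqref{eq:holonomies}, and the curvature relation $\omk(a,b)(e)=(\omega(a,b)(e),e)$, I would trace the abstract transgression of \cite{BZ}, obtained by horizontally lifting an $A$-sphere $\sigma=adt+bds$ and measuring the resulting obstruction in $\pi_1(\K)$, through the $\K$-path integration of Remark \ref{rem:int:kpaths}, which extracts the $C$-component $\int_0^1 c(s)\,ds$ of a $\K$-homotopy class. The outcome should be
$$\delta_2([\sigma],e)=\int_0^1\!\!\!\int_0^1\holC_{a_{1,t}^s}\circ\ \omega(a,b)_{\gamma_t^s}\circ\holE_{a_{t,0}^s}(e)\,dt\,ds\in C_m,$$
which is precisely the period of $\omega$ along $\sigma$, and coincides with the holonomy $2$-representation $\hol(\sigma)$ of \eqref{eq:holonomy-homotopy} evaluated at $e$. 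Hence $\delta_2=0$ if and only if all periods vanish, closing the chain of equivalences.

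The step I expect to be the main obstacle is this last identification. The transgression of \cite{BZ} is defined through a lifting-and-obstruction procedure, and the genuine work lies in the bookkeeping showing that its value equals the stated double integral: one must keep track of the parallel transports $\holC$ and $\holE$ flanking $\omega$, confirm that the curvature $\omk$ of the induced Ehresmann connection integrates over the sphere to the claimed expression, and verify that passing to $\K$-homotopy classes via Remark \ref{rem:int:kpaths} reproduces exactly this integrated curvature. Once this identification is in place, the remaining equivalences are formal consequences of the cited results.
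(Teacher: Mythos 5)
Your proposal is correct: the equivalence $\MON(D,e)\cap\ker p=\im(\delta_2,e)$ does follow from exactness of \eqref{seq:exact:monodromy}, the identification of $\delta_2$ with the period integral is exactly the content of Proposition \ref{prop:transgression} (which the paper proves by the route you sketch, via $\partial_{ext}$ and Remark \ref{rem:int:kpaths}), and chaining these through Theorem \ref{thm:intiff} closes the argument. The paper uses the same two main ingredients --- the exact sequence of Theorem \ref{prop:longexactfibrationvbacase} and Proposition \ref{prop:transgression} --- but handles the ``non-vanishing period implies non-integrable'' direction differently: instead of invoking the monodromy criterion of Theorem \ref{thm:intiff} as a black box, it exhibits an explicit sequence $\xi_n=\bigl(e_0/n,\hol(\sigma_0)(e_0/n)\bigr)\in\im(\delta_2,e_0/n)\subset\MON(D,e_0/n)$ of non-trivial monodromy elements which, by linearity of $\hol(\sigma_0)$, accumulate at a trivial element over $m_0=p_E(e_0)$, thereby violating the Crainic--Fernandes local uniform discreteness criterion directly. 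Your route is shorter and arguably cleaner given that Theorem \ref{thm:intiff} is available; the paper's route is more self-contained and makes visible the geometric mechanism of the obstruction, namely that the fiberwise linearity of the transgression forces monodromy to accumulate at the zero section whenever a single period is non-zero. (The paper also separately notes that integrability of $D$ forces integrability of $A$ via the zero section $0^D_A:A\to D$; in your version this is subsumed in the statement of Theorem \ref{thm:intiff}.) The only step you should not treat as formal is the identification $\delta_2([\sigma],e)=(\hol(\sigma)(e),e)$ itself --- you correctly flag it as the main obstacle, and in the paper it requires the bookkeeping of the proof of Theorem \ref{thm:integrationmodhomotopies} to pin down $\partial_{ext}$ before Proposition \ref{prop:transgression} can be read off.
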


Here, we recover the formula for the holonomy applied to $A$-spheres. In fact, the holonomy along $A$-spheres has a nice interpretation in terms of Lie algebroid fibrations: it coincides with the trangression map of Theorem \ref{thm:longexactfibration}.

\begin{proposition}\label{prop:transgression}
Let $(D;A,E;M)$ be a \vba. The transgression map: $$\delta_2:\pE^*\pi_2(A)\longrightarrow \G(\K)\simeq C\oplus E$$ associated with the underlying fibration $\SelectTips{cm}{}\xymatrix@C=15pt{p:D  \ar@{->>}[r]& A_{}}$
coincides with the holonomy along $A$-spheres \eqref{eq:holonomy-homotopy} induced by any splitting of $D$.
 Namely, for any $A$-sphere $\sigma\in\pi_2(A,m)$ based at a point $m\in M$, seen as an element of $P_2(A)$, we have: 
$$\delta_2[\sigma]_e=(\hol(\sigma)(e),e) \quad \forall e\in p_E^{-1}(m)\subset E.$$
\end{proposition}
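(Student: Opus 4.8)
The plan is to identify the abstract connecting homomorphism $\delta_2$ of the homotopy long exact sequence (Theorem \ref{thm:longexactfibration}) with the explicit holonomy integral \eqref{eq:holonomy-homotopy}, using that any splitting of $D$ furnishes a complete Ehresmann connection on the fibration $p:D\to A$ by Proposition \ref{prop:vbafibration}. First I would recall the construction of $\delta_2$ from \cite{BZ}: given an $A$-sphere $\sigma=a\,dt+b\,ds$ based at $m$, one horizontally lifts $\sigma$ via the lifting map $\hor$; since $\sigma$ is an $A$-homotopy between unit paths, the horizontal lift is a $D$-homotopy whose vertical source and target are horizontal lifts of the trivial $A$-path, and the obstruction to the lift closing up over the sphere is a $\K$-loop based at $e$ whose $\K$-homotopy class is precisely $\delta_2[\sigma]_e\in\pi_1(\K)=\G(\K)$.

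Next I would compute this $\K$-loop explicitly. The infinitesimal failure of the horizontal lift to close is controlled by the curvature $\omk$ of the Ehresmann connection, which by the discussion preceding Proposition \ref{prop:vbafibration} satisfies $\omk(a,b)(e)=\bigl(\omega(a,b)(e),e\bigr)$; moreover, by \eqref{eq:holonomies} the $\K$-holonomy splits as $\holK_a(c,e)=\bigl(\holC_a(c),\holE_a(e)\bigr)$. Feeding these two identities into the integration procedure for $\K$-paths of Remark \ref{rem:int:kpaths}, which expresses a $\K$-homotopy class by integrating its $C$-component and evaluating its $E$-component at the initial point, produces exactly the double integral \eqref{eq:holonomy-homotopy} in the $C$-slot, while the $E$-slot is read off as the initial point $e$ of the lift. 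This gives $\delta_2[\sigma]_e=\bigl(\hol(\sigma)(e),e\bigr)$.

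A cleaner route, which I would actually prefer, is to observe that $\delta_2$ restricted to $A$-spheres is nothing but the map $\partial_{ext}$ appearing in the proof of Theorem \ref{thm:integrationmodhomotopies} (following \cite[Prop.\,4.3]{Br}), specialized to a trivial underlying $A$-path. Indeed, for an $A$-sphere $\sigma\in\pi_2(A,m)$ the source $A$-path is the unit path $\mathbf{1}_m$, so in the formula $\partial_{ext}(\sigma,e')=\bigl(\hol(\sigma)(e),e'\bigr)$ with $e=\holE_{a_0^{-1}}(e')$ established there we have $a_0=\mathbf{1}_m$, hence $\holE_{a_0^{-1}}=\id$ and $e=e'$; the formula then collapses directly to the claim.

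The hard part will be to verify that the abstract connecting map $\delta_2$ of \cite{BZ} agrees on the nose, signs and orientations included, with the boundary map $\partial_{ext}$ of \cite{Br}, since both are connecting homomorphisms built from horizontal lifting but phrased in different frameworks. In particular I would need to check that the ordering of the $t$ and $s$ variables and the direction of parallel transport match, so that the integral comes out as \eqref{eq:holonomy-homotopy} and not as its inverse; once this bookkeeping is settled the identification is essentially definitional.
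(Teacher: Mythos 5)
Your preferred ``cleaner route'' is exactly the paper's proof: it identifies $\delta_2$ with the restriction of $\partial_{ext}$ to $p_E^*\pi_2(A)$ and then observes that for an $A$-sphere the source path $a_0$ is trivial, so $e=\holE_{a_0^{-1}}(e')$ coincides with $e'$ and the formula from the proof of Theorem \ref{thm:integrationmodhomotopies} collapses to the claim. The compatibility of $\delta_2$ with $\partial_{ext}$ that you flag as the hard bookkeeping step is simply asserted ``by construction'' with a citation to \cite{Br,BZ} in the paper, so no further verification of signs or orientations appears there.
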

\begin{proof} We shall use consistently the same notations as in the proof the Theorem \ref{thm:integrationmodhomotopies}.   By construction \cite{Br,BZ}, the transgression map $\delta_2$ coincides with the restriction of $\partial_{ext}$ to $p_E^*\pi_2(A)$. Furhermore, for any $A$-sphere $\sigma$, we have $\partial_{ext}(\sigma,e)=(\hol(\sigma)(e),e)$ since $\sigma:a_0\Rightarrow a_1$ where $a_0$ and $a_1$ are trivial $A$-paths, therefore $e'$ and  $e=\holE_{a_0^{-1}}(e')$ do coincide. 
\end{proof}

\begin{proof}[of the Theorem \ref{thm-integrability:integral}] According to Theorem \ref{prop:longexactfibrationvbacase}, the monodromy groups fit into an exact sequence:
\begin{align}\label{exact:proof:int}
\SelectTips{cm}{}\xymatrix@C=15pt{\im (\delta_2,e)\  \ar@{^{(}->}@<-0.5pt>[r]& \MON(D,e)   \ar@{->>}@<-0.5pt>[r] & \MON(A,m)},
\end{align}
where $\delta_2(\sigma,e)=(\hol(\sigma)(e),e)$ by Proposition \ref{prop:transgression}. Assume that there exists an $A$-sphere $\sig_0$ with $\hol(\sig_0)\neq 0$, and let $e_0\in E$ such that $\hol(\sig_0)(e_0)\neq 0$. Then we can define a sequence of non trivial elements in the monodromy groups as follows: 
$$\xi_n:=\bigl(\,\frac{e_0}{n}, \hol(\sig_0)\bigl(\frac{e_0}{n}\bigr)\,\big)\in
\im (\delta_2,\frac{e_0}{n})\subset \MON(D,\frac{e_0}{n}) \quad\quad  (n\in \mathbb{N}).$$
By the linearity of the application $\hol(\sig_0)$, we see that $(\xi_n)$ defines a sequence of non trivial elements of the monodromy groups of $D$, that converges to a trivial element, namely $0_M^A(m_0)$ where $m_0:=p_E(e_0)$. As a consequence, $D$ can not be integrable.
 Furthermore, if  $D$ is integrable, then $A$ is also integrable because the zero section $0_A^D:A\to D$ defines a Lie algebroid morphism by the axioms of a \vba. 

Reciprocally, if $\hol(\sigma)=0$ for any $A$-sphere $\sig$, then  $\im(\delta_2,e))$ is trivial for any $e\in E$, and it is clear from \eqref{exact:proof:int} that $D$ is integrable
provided $A$ is. 


\end{proof}

\subsection{Integration of type $1$ \vbas}\label{sec:int:t1}

First we recall the definition of a \vba of type $1$ from \cite{GM08}.
\begin{definition}
A \vba $(D;A,E;M)$ is said to be of \textbf{type $1$} if the core anchor $\partial:C\to E$ is a vector bundle isomorphism.	
\end{definition}

As explained in \cite[Sec.\,6]{GM08},
given a \vba $(D;A,E;M)$ of type $1$, one may use $\partial:E\to C$ in order to identify $E$ with $C$. 
By doing so, it follows from \eqref{def:ruh1} that given a splitting of $D$, the induced connections $\nabla^{\scriptscriptstyle{E}}$ and $\nabla^{\scriptscriptstyle{C}}$ identify with each other, while $\omega$ identifies with the curvature $\omegaE$ of $\nablaE$. Consequently, any \vba  of type $1$ identifies with a pull-back Lie algebroid, $D\simeq p_{E }^!A$. As a consequence, $D$ is integrable if, and only if, $A$ is integrable, and hence $\G(D)$ can be obtained as a pull-back groupoid. We explain now how this integration can be recovered from Theorem \ref{thm:integrationmodhomotopies}. For that, consider the $2$-term representation up to homotopy $(\nabla^E,\nabla^E,\omega_E)$ associated to a \vba of type $1$ and let $\tPa(A)\ltimes \E$ be the transformation $2$-groupoid defined by the corresponding holonomy $2$-representation.

\begin{proposition}\label{prop:integrationtype1}

The $1$-truncation of $\tPa(A)\ltimes \E$ identifies with the pull-back groupoid $ p_{E}^!\G(A)$.

\end{proposition}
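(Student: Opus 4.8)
The plan is to produce a direct isomorphism of groupoids from the $1$-truncation of $\tPa(A)\ltimes\E$ onto the pull-back groupoid $p_{E}^{!}\G(A)$, whose arrows are triples $(e_1,g,e_0)$ with $g\in\G(A)$, $p_E(e_0)=\s(g)$, $p_E(e_1)=\t(g)$, source $e_0$, target $e_1$, and product $(e_2,g_1,e_1)\cdot(e_1,g_0,e_0)=(e_2,g_1g_0,e_0)$. I would send a $1$-morphism $(c,a,e)\in P_1(A)\ltimes\E$ to
\[
\Psi(c,a,e):=\bigl(\tit(c,a,e),[a],e\bigr)=\bigl(\holE_a(e)+\partial c,\,[a],\,e\bigr),
\]
where $[a]\in\G(A)$ is the class of the $A$-path $a$. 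The first point is to check that $\Psi$ descends to the $1$-truncation: if $(c_0,a_0,e_0)$ and $(c_1,a_1,e_1)$ are joined by a $2$-morphism $(c,\sigma,e)$ of $\tPa(A)\ltimes\E$, then $\sigma\colon a_0\Rightarrow a_1$ forces $[a_0]=[a_1]$ by Proposition \ref{prop:weinsteintruncation} and $e_0=e_1$, while the equality of the target components is exactly the statement that $\hol(\sigma)$ is a chain homotopy, i.e. $\partial\circ\hol(\sigma)=\holE_{a_1}-\holE_{a_0}$ (the defining relation of a $2$-morphism in $\tGau(\E)$). Notice that this step does \emph{not} use the type $1$ hypothesis.

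Next I would verify that $\Psi$ is a morphism of groupoids. Source and target are matched by the very definition of $\ts$ and $\tit$, and units go to units. For the product, the only nontrivial point is the equality of the target components, which follows by combining functoriality of the holonomy $2$-functor of Theorem \ref{thm:2functor} (compatibility of $\hol$ with concatenation) with the chain-map identity $\partial\circ\holC_{a}=\holE_{a}\circ\partial$. Bijectivity is where the type $1$ hypothesis is essential. For surjectivity, given $(e_1,g,e_0)$ I choose any $A$-path $a$ representing $g$ and, using that $\partial$ is invertible, set $c:=\partial^{-1}\bigl(e_1-\holE_a(e_0)\bigr)\in C_{\t(a)}$, so that $\Psi(c,a,e_0)=(e_1,g,e_0)$. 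For injectivity, if $\Psi(c_0,a_0,e_0)=\Psi(c_1,a_1,e_1)$ then $e_0=e_1$, there is an $A$-homotopy $\sigma\colon a_0\Rightarrow a_1$, and $\partial c_0+\holE_{a_0}(e_0)=\partial c_1+\holE_{a_1}(e_0)$; the chain-homotopy relation rewrites this as $\partial(c_1-c_0)=-\,\partial\,\hol(\sigma)(e_0)$, and injectivity of $\partial$ gives $c_1-c_0=-\hol(\sigma)(e_0)$, which is precisely the relation defining a $2$-morphism of $\tPa(A)\ltimes\E$. Hence the two $1$-morphisms represent the same class.

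The crux of the argument is the identification of the equivalence relation defining the $1$-truncation with the fibres of $\Psi$, and this is exactly where invertibility of the core anchor intervenes: it makes the core component $c$ of a $1$-morphism completely determined by its source, its target and its underlying class in $\G(A)$, so that the transformation $2$-groupoid collapses onto $p_{E}^{!}\G(A)$. Finally, this explicit isomorphism is consistent with the general statement of Theorem \ref{thm:integrationmodhomotopies}: the $1$-truncation is the Weinstein groupoid $\G(D)$, and since a type $1$ \vba satisfies $D\simeq p_{E}^{!}A$ one has $\G(D)\simeq p_{E}^{!}\G(A)$; the map $\Psi$ above is a concrete realization of this identification.
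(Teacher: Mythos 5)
Your proof is correct and follows essentially the same route as the paper: both exhibit the explicit map $(c,a,e)\mapsto\bigl(\holE_a(e)+\partial c,[a],e\bigr)$ and identify the $2$-morphism equivalence relation with its fibres. The only (cosmetic) difference is that the paper first uses the identification $C\simeq E$ and Lemma \ref{lem:curvature:global} to compute $\hol(\sigma)=\holE_{\tV(\sigma)}-\holE_{\sV(\sigma)}$ explicitly, whereas you rely on the chain-homotopy identity $\partial\circ\hol(\sigma)=\holE_{a_1}-\holE_{a_0}$ together with invertibility of $\partial$ --- an equivalent, slightly leaner way of invoking the type $1$ hypothesis.
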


\begin{proof} First one may observe using sucessively the identification $E\simeq C$ described above together with Lemma \ref{lem:curvature:global}, that the holonomy $2$-functor is given by:
\begin{align}
\hol(a)&=(\holE_a,\holE_a), \quad\quad\quad\ \ \   a\in P_1(A).\\
\label{eq:type1holonomy}
\hol(\sig)&= \holE_{\t_V(\sig)}-\holE_{\sV(\sig)}, \quad \sig\in P_2(A).
\end{align}
In particular, since an $A$-sphere is an $A$-homotopy between trivial paths, the holonomy along any $\sigma\in\pi_2(A)$ is trivial. It follows that the transgression map of Proposition \ref{prop:transgression} vanishes, and we recover the fact that
 $D$ is integrable as a consequence of Theorem \ref{thm-integrability:integral}.
 
Finally, the fact that $D$ integrates to a pull-back groupoid can be obtained by Theorem \ref{thm:integrationmodhomotopies} in the following way.
By the definitions, two $1$-morphisms $(c_0,a_0,x_0)$ and $(c_1,a_1,x_1)$ in $P_1(A)\ltimes \E$ are joined by a $2$-morphism if, and only if, 
the following conditions hold: $x_0=x_1$ and there exists an $A$-homotopy $\sig: a_0\Rightarrow a_1$ such that $c_0=c_1+\hol(\sig)(x_0)$.

Taking the identification $E\simeq C$ and formula \eqref{eq:type1holonomy} into account,
it follows that the quotient space of $1$-morphisms by $2$-morphisms identifies with $ p_{E}^!\G(A)$ by the map
\begin{align*}\quotient{P_1(A)\ltimes \E}{\sim}\ &\longrightarrow \quad p_ E^!\G(A)\\
[c,a,e]\quad\ &\longmapsto (c+\holE_a(e),[a],e).
\end{align*}
Then it is easily checked that the groupoid structure on $P_1(A)\ltimes \E \tto E$ descends to the pull-back groupoid structure on $ p_{E}^!\G(A)$.

\end{proof}

In order to illustrate how the integration of a Lie algebroid $2$-term representation up to homotopy by a $2$-functor differs from a $2$-term representation up to homotopy of the corresponding Weinstein groupoid, which may have been expected as the natural integrating structure, let us take a look at a example proposed in \cite[ex. 2.6]{GM10}.
\begin{example}\label{ex:type1}
Consider the following  \vba:
\begin{align*}\SelectTips{cm}{}
\xymatrix{ TTS^2 \ar[d]_{p_{TTS^ 2}}\ar[r]^-{d{p_{TS^ 2}}} &TS^2\ar[d]^{p_{TS^ 2}}\\
           T S^2 \ar[r]^-{{p_{TS^ 2}}}& S^2}
\end{align*}
where ${p_{TS^ 2}}:TS^ 2\to S^ 2$ and ${p_{TTS^ 2}}:TTS^ 2\to TS^ 2$ denote the natural projection, and $d{p_{TS^ 2}}$ the differential of ${p_{TS^ 2}}$. In that case, we have $\E=(\id: TS^2\to TS^2)$, therefore the \vba $D$ is of type $1$ and  integrates to a pull back groupoid as follows:
\begin{align*}
\SelectTips{cm}{}
\xymatrix{ TS^2\times TS^2 \ar@<0.5ex>[d]\ar@<-.5ex>[d]\ar[r]^{} &S^2\times S^2\ar@<0.5ex>[d]\ar@<-.5ex>[d]\\
           S^2\times S^2 \ar[r]^{}&  S^2,}
\end{align*}
where both $TS^2\times TS^2\tto TS^2$ and $S^2\times S^2 \to S^2$ are pair groupoids.

A splitting of $D$ gives rise to a linear connection $\nablaE$ on $E$ with curvature $\omega_E$. The corresponding $2$-term representation up to homotopy is given by $(\nablaE,\nablaE,\omegaE)$. As explained above, the integrating $2$-functor is simply given by $\hol(\dot{\gamma})=(\holE_\gamma,\holE_\gamma)$ and $\hol(\sigma)=\holE_{\gamma_0}-\holE_{\gamma_1}.$
\end{example}

\begin{remark}\label{rem:Rajehorizontalliftexample}
Comparing Example \ref{ex:type1} with \cite[Ex. 2.6]{GM10}, one may notice that the holonomy $2$-functor is obtained by using only the infinitesimal data $(\nablaE,\nablaE,\omegaE)$, while the construction of an explicit $2$-term representation up to homotopy of $S^2\times S^2\tto S^2$ requires further choices (a Riemannian metric for instance, as  in \cite{GM10}). 

In fact, although the choice of a right-horizontal lift can be differentiated to a $2$-term representation up to homotopy of the underlying Lie algebroid, the converse is \emph{not} true. Namely, the choice of a splitting of a \vba does not integrate to a right-horizontal lift, at least not without involving further choices. It follows that given a $2$-term representation up to homotopy of a Lie algebroid $A$, whose corresponding \vba is assumed to be integrable, we still \emph{do not} obtain a representation up to homotopy of $\G(A)$ in a canonical way.
\end{remark}

\begin{remark}\label{rem:splitingsdonotintegrate}
There is another point that we believe is worth clarifying, which is the following. A $2$-term representation up to homotopy of a Lie groupoid, say $\G(A)$, involves maps $\DeltaE:\G(A)\to \Hom(E,E)$ and $\DeltaC:\G(A)\to \Hom(C,C)$, usually thought of as the holonomy in $\E$ along elements of $\G(A)$.
 
  As emphasized in \cite[Ex. 2.6]{GM10}, in order to cover  a general enough notion of  $2$-term representation up to homotopy for Lie groupoids, one needs to allow $\DeltaE$ and $\DeltaC$ to take values in \emph{non-invertible} homomorphisms.

 This may be quite confusing in view of Theorem \ref{thm:2functor}, since not only do $\holE:P_1(A)\to \Hom(E,E)$ and $\holC:P_1(A)\to \Hom(C,C)$ \emph{not} descend to maps $\G(A)\to \Hom(E,E)$ and $\G(A)\to \Hom(C,C)$, but also do $\holE_a$ and $\holC_a$ \emph{always} define  invertible maps.
 
  Again, the apparent contradiction comes from the fact that the choice of a splitting of $D$ is not enough to induce a right-horizontal lift of $\G(D)$. As a consequence, the corresponding quasi-action $(\DeltaE, \DeltaC)$ is not entirely determined by the infinitesimal data $(\nablaE,\nablaC)$. It is not hard to see however that in general, $(\holE_a,\holC_a)$ and $(\DeltaE_{[a]},\DeltaC_{[a]})$ coincide up to a chain homotopy.
 \end{remark}

\subsection{Integration of type $0$ \vbas}\label{sec:int:t0}

We now explain how to integrate a \vba $(D;A,E;M)$ of type $0$. 
\begin{definition}[\cite{GM08}]
A \vba $(D;A,E;M)$ is said to be of \textbf{type $0$} if the core anchor $\partial:C\to E$ vanishes.	
\end{definition}

Given a splitting of a  \vba of type $0$, it follows from the axioms that the associated $A$-connections $\nablaE,\nablaC$ have vanishing curvatures. Hence both $\nablaE$ and $\nablaC$ are representations of $A$. Furthermore, $\omega$ defines a $2$-cocycle with values in the induced representation of $A$ on $\Hom(E,C)$. Note that $\nablaE,\nablaC$ are canonical in the sense that they are independent of the choice of a splitting of $D$, while 
only the induced class $[\omega]\in H^ 2(A,\Hom(E,C))$ depends on this choice. See \cite{GM08} for more details.

Since both $E$ and $C$ are honest representations, it makes sense to integrate $\omega$ along any $A$-sphere $\sig$ by usual integrals. Note that the resulting integral, called \textbf{period along $\sig$}, only depends on the cohomology class $[\omega]\in H^ 2(A,\Hom(E,C))$ and the homotopy class of $\sig$ in $\pi_2(A)$.  For the sake of simplicity, we shall assume that $E$ and $C$ are trivial vector bundles $E=M\times E_0$ and $C=M\times C_0$ on which $A$ acts trivially. Then for any $A$-path, the holonomy in $E$ and $C$ is the identity. That is 
\begin{align*}
\hol(a)\simeq(\id_{E_0},\id_{C_0}).
\end{align*}
This simplifies the formula \eqref{eq:holonomy-homotopy} for the holonomy along $2$-morphisms, justifying the notation:
 $$\hol(\sig)=\int\!\!\!\! \int_{\sig} \omega$$
The integration of a type $0$ \vba can then be summarized as follows.
\begin{proposition}\label{prop:integrationtype0}
Let $(D;A,E;M)$  be a \vba of type $0$, where both $E$ and $C$ are trivial representations, $E=M\times E_0$ and $C=M\times C_0$, of $A$. Then the following assertions hold:
\begin{itemize}
\item[i)] $D$ is an integrable Lie algebroid if, and only if, $A$ is integrable and for any $[\sig] \in \pi_2(A)$, the periods of $\omega$ along $[\sig]$ vanish.
\item[ii)] the (possibly topological) Weinstein groupoid $\G(D)$ identifies with the quotient of $C_0\times P_1(A) \times E_0$ by the following equivalence relation:
$$
 (c_0,a_0,e)\sim(c_1,a_1,e) \iff \left\{ \begin{gathered} \text{there exists an $A$-homotopy $\sig:a_0\Rightarrow a_1$ such that:}\\
 c_1-c_0=\left(\int\!\!\!\!\!\,\int_{\sig} \omega\right)(e)\end{gathered}\right.
$$
\end{itemize}
\end{proposition}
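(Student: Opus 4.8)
The plan is to obtain both assertions as direct specializations of the two general theorems already at our disposal: the integrability criterion of Theorem \ref{thm-integrability:integral} and the identification of $\G(D)$ with a $1$-truncation in Theorem \ref{thm:integrationmodhomotopies}. The only real content is to record how the holonomy $2$-representation degenerates in the type $0$ case with trivial coefficients, so I would start there.

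First I would observe that, since $\partial = 0$ and $A$ acts trivially on $E = M \times E_0$ and on $C = M \times C_0$, the connections $\nablaE$ and $\nablaC$ are the trivial flat connections; hence their parallel transport along any $A$-path $a$ is the identity, $\holE_a = \id_{E_0}$ and $\holC_a = \id_{C_0}$. Plugging this into \eqref{eq:holonomy-homotopy} collapses the two conjugating factors, so that along any $A$-homotopy $\sig$ one gets
$$\hol(\sig) = \int\!\!\!\int_\sig \omega \;\in\; \Hom(E_0, C_0),$$
which is exactly the period of $\omega$ along $\sig$. Because $\nabla \omega = 0$ makes $\omega$ a genuine $2$-cocycle valued in the trivial representation $\Hom(E,C)$, this period depends only on $[\omega] \in H^2(A, \Hom(E,C))$ and on $[\sig] \in \pi_2(A)$, so it is well defined on homotopy classes of $A$-spheres.

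For assertion i) I would then quote Theorem \ref{thm-integrability:integral} verbatim: $D$ is integrable if and only if $A$ is integrable and the stated period integral vanishes for every $A$-sphere. With the simplification above, that integral is precisely $\left(\int\!\!\int_\sig \omega\right)(e)$, which gives the claim. Alternatively, Proposition \ref{prop:transgression} identifies the transgression $\delta_2$ with $[\sig] \mapsto \int\!\!\int_\sig \omega$, and by the monodromy sequence \eqref{seq:exact:monodromy} integrability of $D$ amounts to the vanishing of $\im(\delta_2, e)$ for all $e$, i.e. to the vanishing of all periods.

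For assertion ii) I would invoke Theorem \ref{thm:integrationmodhomotopies}, which realizes $\G(D)$ as the $1$-truncation of $\tPa(A) \ltimes \E$. Triviality of the bundles identifies the space of $1$-morphisms $P_1(A) \ltimes \E = \t^*C \oplus \s^*E$ with $C_0 \times P_1(A) \times E_0$, and the equivalence relation extracted in the proof of Theorem \ref{thm:integrationmodhomotopies}, namely $(c_0,a_0,e_0) \sim (c_1,a_1,e_1)$ iff $e_0 = e_1$ and there is an $A$-homotopy $\sig : a_0 \Rightarrow a_1$ with $c_1 - c_0 = \hol(\sig)(e_0)$, becomes the stated relation once $\hol(\sig)$ is replaced by $\int\!\!\int_\sig \omega$. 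I expect the whole argument to be essentially mechanical once the degeneration $\holE_a = \holC_a = \id$ is in place; the only mild point to keep track of is the base point in $E$ when passing to the $1$-truncation, which is harmless here precisely because the relation forces $e_0 = e_1$, and the compatibility of the resulting bijection with the groupoid multiplication follows directly from the explicit formulas in Definition \ref{2semidirect}.
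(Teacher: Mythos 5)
Your proposal is correct and follows essentially the same route as the paper: the paper's own proof simply cites Theorem \ref{thm-integrability:integral} for part i) and Theorem \ref{thm:integrationmodhomotopies} for part ii), after having recorded (in the paragraph preceding the proposition) exactly the degeneration $\holE_a=\holC_a=\id$ and $\hol(\sig)=\int\!\!\int_\sig\omega$ that you spell out. Your write-up just makes the specialization more explicit than the paper does.
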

\begin{proof}
Part $i)$ follows from Theorem \ref{thm-integrability:integral} while the explicit description of the Weinstein groupoid $\G(D)$ is a direct application of Theorem \ref{thm:integrationmodhomotopies}.
\end{proof}

\begin{remark}
Notice the analogy with the construction of the groupoid integrating a prequantization Lie algebroid given by Crainic  (see \cite[Remark 3.3]{Cr-preq}, also \cite{CC}). In fact, for any \vba $(D;A,E;M)$ of type $0$, $D$ can be obtained as a \emph{central} extension in the following way. 

First we denote by $A\ltimes E\to E$ the transformation Lie algebroid. As a general fact \cite[Ex. 2.15]{Br}, $A\ltimes E$ fits into a fibration $\SelectTips{cm}{}\xymatrix@C=15pt{A\ltimes E  \ar@{->>}[r]& A}$ whose kernel is the trivial Lie algebroid over $E$. Next, observe that there is an obvious representation of $A\ltimes E\to E$ on $\pE^* C\to E$ induced by that of $A$ on $C$ by pull-back.  Furthermore, $\omega\in \Omega^2(A\ltimes E,C)$ can be seen as a $2$-cocycle on $A\ltimes E$ with values in $\pE^*C$ by using the obvious inclusion
 $\Omega^2(A,\Hom(E,C))\subset\Omega^2(A\ltimes E,\pE^*C).$
 
Then it is easily seen from the brackets that $D$ identifies with a central extension $D \simeq (A\ltimes E)\ltimes_{\omega} \pE^*C$ with twisting cocycle $\omega$, so that  $D$ fits into the following extension:
\begin{equation}\label{diag:alternative:extension:type0}
\vcenter{\hbox{\SelectTips{cm}{}\xymatrix{{\pE^*C}_{}\ \ar@{^{(}->}@<-0.25pt>[r]^-{i} \ar[d]&D \ar@{->>}@<-0.25pt>[r]^-{p} \ar[d]& A\ltimes E\ar@{->}[d] \\
             E \ar@{->}[r]^{\id_E}&E  \ar@{->}[r]^{\id_E}              & E.}}}
\end{equation}
Note that in this exact sequence, all Lie algebroids are over the same base.
\end{remark}

\begin{example}
There is a $2$-representation up to homotopy one can associate to any finite dimensional Lie algebra $\g$, as was proposed by Sheng-Zhu \cite{SC} in relation with string $2$-algebras. The construction goes as follows.

 Consider the two term complex with trivial boundary $\E:=(C:=\mathbb{R}\xrightarrow{0} E:=\g^*)$ and the $2$-term representation up to homotopy of $\g$ on $\E$ where:
\begin{itemize}
\item $\nablaE:=\ad^*:\g\to \End(\g^*)$ is the coadjoint representation 
\item $\nablaC:=0:\g\to \End(\mathbb{R})$ is the trivial representation,
\item $\omega:=[\ ,\ ]_\g$ is given the Lie algebra bracket on $\g$, seen as an element in $\wedge^2\g^*\otimes\Hom(\g^*,\mathbb{R})$.
\end{itemize}
The associated \vba is given by $D=\g\times\g^*\times \mathbb{R}$, and fits into a Lie algebroid fibration:
$$\SelectTips{cm}{}\xymatrix{\mathbb{R}\times\g^*\,\ar@{^{(}->}@<-0.25ex>[r]\ar[d] &\g\times\g^*\times \mathbb{R}\ar@{->>}@<-0.25ex>[r]\ar[d]& \g\ar[d] \\
                     \g^*       \ar@<-0.25ex>[r]               &\g^*                         \ar@<-0.25ex>[r]            &  \{*\}.}$$
Here, $D$ is indeed of type $0$ so the kernel $\K=\g^*\times\mathbb{R}$ of the fibration is simply a bundle of abelian Lie algebras over $\g^*$.

 The central extension  of Remark \eqref{diag:alternative:extension:type0} takes a particularly interesting form in this example, as we now explain.
 In order to put things into context, recall that given a Poisson manifold $(M,\pi)$ one usually considers the Lie algebroid structure on $T^*\!M$. However, one might see $(M,\pi)$ as a Jacobi manifold \cite{CC} as well. This amounts to see $\pi$ as a $2$-cocycle on $T^*\!M$ with values in the trivial representation $M\times\mathbb{R}$. Then the Lie algebroid structure of $(M,\pi)$, seen as a Jacobi manifold, coincides with the corresponding extension class. More explicitly, it is defined on $T^*\!M\times \mathbb{R}$ with anchor $\rho(\alpha,f)=\pi^\sharp(\alpha)$ and bracket:
\begin{equation*}
\bigl[(\alpha,f),(\beta,g)\bigr]_{T^*\!M\oplus\mathbb{R}}:=\bigl([\alpha,\beta]_{T^*\!M},\Lie_{\pi^\sharp\alpha}g-\Lie_{\pi^\sharp\beta}f+\pi(\alpha,\beta)\bigr),
\end{equation*}
where $[\ ,\ ]_{T^*\!M}$ denotes the standard Lie algebroid bracket on $T^*\!M$ induced by $\pi$. We obtain this way an extension of the form $\SelectTips{cm}{}\xymatrix{ M\times \mathbb{R}\ \ar@{^{(}->}@<-0.20ex>[r] &T^*\! M\times\mathbb{R}  \ar@{->>}@<-0.20ex>[r]& T^*\! M}$, where all the Lie algebroids are over the same base $M$. In the case of a linear Poisson structure $M=\g^*$, one easily checks from the definitions that the Lie algebroid structure on $T^*\!M\times \mathbb{R}$ coincides with $D=\g\times\g^*\times\mathbb{R}$, and we recover the diagram \eqref{diag:alternative:extension:type0} as follows:
$$\SelectTips{cm}{}\xymatrix{\mathbb{R}\times\g^*\,\ar@{^{(}->}@<-0.25ex>[r]\ar[d] &\g\times\g^*\times\mathbb{R} \ar@{->>}@<-0.25ex>[r]\ar[d]& \g\times \g^*\ar[d] \\
                      \g^*       \ar[r]               &\g^*                         \ar[r]            &  \g^*    ,}$$
Finally, note that by applying Proposition \ref{prop:integrationtype0}  to $D$, we recover a result by Cranic-Zhu \cite[Thm. 4, Ex. 4]{CC} describing the integration of $\g^*$ as a Jacobi manifold.
\end{example}

\appendix

\section{Proof of the Theorem \ref{thm:2functor}}\label{sec:thm:proof}
The proof will be divided in several lemmas. 

\begin{lemma}
\label{lem:chain:hom}                                       
For any $A$-path $a:TI\to A$, $\hol(a)=(\hol^E_a,\hol^C_a)$ is a chain homotopy.
\end{lemma}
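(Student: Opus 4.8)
The plan is to verify that $\hol(a)=(\holC_a,\holE_a)$ satisfies the defining condition of a $1$-morphism in $\tGau(\E)$, that is, that it is an invertible chain map $\gamma(0)\to\gamma(1)$, where $\gamma:=p_A\circ a:I\to M$ is the base path of the $A$-path $a$. Writing $x=\gamma(0)$ and $y=\gamma(1)$, I must show that $\holE_a\in\Iso(E_x,E_y)$, $\holC_a\in\Iso(C_x,C_y)$, and that the square commutes, i.e.
$$\holE_a\circ\partial_x=\partial_y\circ\holC_a.$$
Invertibility of each component is immediate from the general theory of parallel transport: $\holE_a$ and $\holC_a$ are the parallel transports of the $A$-connections $\nablaE$ and $\nablaC$ along $a$, which are invertible with inverse the parallel transport along $a^{-1}$. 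So the whole content lies in the commuting square.

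The key observation is that the compatibility condition \eqref{def:ruh1}, namely $\partial\circ\nablaC=\nablaE\circ\partial$, says precisely that the bundle map $\partial:C\to E$ (covering $\id_M$) is \emph{parallel} with respect to the induced $A$-connection on $\Hom(C,E)$. A parallel bundle morphism intertwines parallel transport, and I would make this explicit along $a$ as follows. Given $c_0\in C_x$, let $c(t)\in C_{\gamma(t)}$ be the $\nablaC$-parallel section along $a$ with $c(0)=c_0$, so that $c(1)=\holC_a(c_0)$. Setting $\tilde e(t):=\partial(c(t))\in E_{\gamma(t)}$ and applying \eqref{def:ruh1} along the path yields
$$\nablaE_a\tilde e=\nablaE_a(\partial c)=\partial(\nablaC_a c)=0,$$
so that $\tilde e$ is the $\nablaE$-parallel section along $a$ with initial value $\tilde e(0)=\partial(c_0)$. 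Evaluating at $t=1$ now gives $\partial_y(\holC_a(c_0))=\tilde e(1)=\holE_a(\partial_x(c_0))$, which is exactly the commuting square. Since $c_0\in C_x$ is arbitrary, $\hol(a)$ is a chain map.

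The one point requiring care is the passage from the section-level identity \eqref{def:ruh1} to its path-level counterpart used in computing $\nablaE_a\tilde e$. This is handled by pulling the connections and the map $\partial$ back along the Lie algebroid morphism $a\,dt:TI\to A$: parallelness of $\partial$ is preserved under pullback, so that over the interval $I$ the pulled-back map intertwines the pulled-back parallel transports, which is all that enters the parallel transport ODE. With this understood the computation above is rigorous, and the lemma follows.
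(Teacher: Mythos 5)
Your proof is correct and follows essentially the same route as the paper: the paper's one-line argument ("integrate the relation $\partial\circ\nablaC=\nablaE\circ\partial$") is exactly the computation you spell out, namely that the image under $\partial$ of a $\nablaC$-parallel section along $a$ is $\nablaE$-parallel, hence $\partial\circ\holC_a=\holE_a\circ\partial$. Your additional remarks on invertibility and on pulling back along $a\,dt:TI\to A$ are standard and only make explicit what the paper leaves implicit.
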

\begin{proof}
We have to show that $\holE_a\circ\,\d=\d\circ\holC_a$, 
which obviously follows by integrating the relation $\nablaE\circ \d=\d\circ\nablaC$.
\end{proof}

We need to introduce some notations. Given an $A$-homotopy $adt+bds:TI^2\to A$, we shall denote $\gamma_t^s$ rather than $\gamma(t,s)$ the base map. Similarly, for any $t,t'\in[0,1]$, we will denote $a^s_{t',t}:T[t,t']\to A$ the $A$-path $a|_{[t',t]\times\{s\}}dt$, while
\begin{align*}\holE_{a^s_{t'\!\!,t}}&:E_{\gamma^s_t}\to E_{\gamma^s_{t'}}, \\
                \holC_{a^s_{t'\!\!,t}}&:C_{\gamma^s_t}\to C_{\gamma^s_{t'}}
\end{align*}
will denote the corresponding holonomies. 
\begin{lemma}\label{lem:curvature:global}
For an arbitrary $A$-connection $\nablaE$ on a vector bundle $E\to M$ with curvature $\omegaE\in\Omega^2(A,\Hom(E,E))$, and any $A$-homotopy $adt+bds:TI^2\to A$, the       
parallel transport and the curvature are related as follows:                                                                                                                  
\begin{equation}\label{eq:curvature:global}                                                                                                                                   
\frac{d}{ds} \holE_{a_{1,0}^s}(x)=\int_0^1 \holE_{a^s_{1,t}}\circ\ \omegaE(a,b)_{\gamma_{t}^{s}} \circ \holE_{a^s_{t,0}}(x)dt,                                            
\end{equation}                                                                                                                                                                
\end{lemma}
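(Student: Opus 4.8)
The plan is to reduce the identity to the classical variation-of-parallel-transport formula for an ordinary connection, by pulling everything back along the $A$-homotopy. Since $\sigma=adt+bds:TI^2\to A$ is a Lie algebroid morphism covering the base map $\gamma:I^2\to M$, the $A$-connection $\nablaE$ pulls back to an ordinary connection $\sigma^*\nablaE$ on the vector bundle $\gamma^*E\to I^2$, whose two partial covariant derivatives $\tfrac{D}{Dt}$ and $\tfrac{D}{Ds}$ correspond to $\nablaE_a$ and $\nablaE_b$ respectively. Because $\sigma$ is a morphism and $[\partial_t,\partial_s]=0$ in $TI^2$, the curvature of $\sigma^*\nablaE$ evaluated on $(\partial_t,\partial_s)$ is exactly $\omegaE(a,b)_{\gamma_t^s}$. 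First I would record the consequence of the boundary condition $b|_{\{t=0,1\}}=0$ of an $A$-homotopy: since $\partial_s\gamma=\rho_A(b)$, the endpoints $\gamma^s_0$ and $\gamma^s_1$ are independent of $s$, so both sides of the identity live in the fixed fibre $E_{\gamma^0_1}$ and the $s$-derivative of $\holE_{a^s_{1,0}}(x)$ makes sense.

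Then I would fix $x\in E_{\gamma^0_0}$ and set $u(t,s):=\holE_{a^s_{t,0}}(x)$, a section of $\gamma^*E$ characterised by the parallel transport equation $\tfrac{D}{Dt}u=0$ together with $u(0,s)=x$. Writing $v:=\tfrac{D}{Ds}u$, the key computation is to differentiate in the $t$-direction and commute covariant derivatives:
$$\frac{D}{Dt}v=\frac{D}{Dt}\frac{D}{Ds}u=\frac{D}{Ds}\frac{D}{Dt}u+\omegaE(a,b)_{\gamma_t^s}\,u=\omegaE(a,b)_{\gamma_t^s}\circ\holE_{a^s_{t,0}}(x),$$
where the middle term vanishes because $\tfrac{D}{Dt}u=0$. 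Moreover $v(0,s)=0$, since at $t=0$ one has $b(0,s)=0$, so the covariant $s$-derivative of the constant section $u(0,s)=x$ reduces to the ordinary one and hence vanishes.

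The final step is to solve this first-order covariant ODE along $t$ by Duhamel's principle: from $\tfrac{D}{Dt}v=f$ with $v(0,s)=0$ one obtains $v(t,s)=\int_0^t \holE_{a^s_{t,t'}}\circ f(t',s)\,dt'$, which at $t=1$ gives precisely the right-hand side of \eqref{eq:curvature:global}. To conclude I would again invoke $b(1,s)=0$, which makes the covariant $s$-derivative $v(1,s)$ agree with the ordinary derivative $\tfrac{d}{ds}\holE_{a^s_{1,0}}(x)$ in the fixed fibre $E_{\gamma^0_1}$, yielding the claimed equality.

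The routine parts are the verification of the pullback connection and the Duhamel formula; the one point deserving care, and the main conceptual obstacle, is the identification of the pulled-back curvature $\tfrac{D}{Dt}\tfrac{D}{Ds}-\tfrac{D}{Ds}\tfrac{D}{Dt}$ with $\omegaE(a,b)_{\gamma_t^s}$, which is where the hypothesis that $\sigma$ is a genuine Lie algebroid morphism (and not merely a bundle map) is used, together with the careful bookkeeping of the two boundary conditions $b|_{\{t=0\}}=0$ and $b|_{\{t=1\}}=0$ that make the left-hand side well defined and let one replace the covariant $s$-derivative by the ordinary one at the endpoints.
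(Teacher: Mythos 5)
Your proposal is correct and follows essentially the same route as the paper: the paper's proof consists precisely of the reduction to an ordinary connection by pulling back along the Lie algebroid morphism $TI^2\to A$, with the classical variation-of-parallel-transport formula then delegated to the reference \cite{NI01}. You simply supply in full the Nijenhuis-type computation (commuting the covariant derivatives $\tfrac{D}{Dt}$ and $\tfrac{D}{Ds}$, then Duhamel) that the paper cites, together with a careful treatment of the boundary conditions $b|_{\{t=0,1\}}=0$, so nothing is missing.
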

A detailed proof of Lemma \ref{lem:curvature:global} can be found in \cite{NI01} in the case of usual linear connections (\emph{i.e.} $TM$-connection). The case of an arbitrary $A$-connection follows by pulling back the $A$-connection along morphisms $TI^2\to A$.

\begin{remark}
Notice the global nature of the equation \eqref{eq:curvature:global} as opposed to the local definition of the curvature $\omegaE=[\nablaE,\nablaE]-\nablaE_{[\ , \ ]}$. It seems not to be a very popular equation\footnote{see the comment in \cite[Sec. 15.4.1]{BE01}} although it goes back to the work of Nijenhuis \cite{NI01} and  a fundamental one for our purposes.
\end{remark}
\begin{lemma}
	 The holonomy along an $A$-path only depends on its thin homotopy class.  
\end{lemma}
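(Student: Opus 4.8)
The plan is to show that if $\sig=adt+bds\colon TI^2\to A$ is a thin $A$-homotopy from $a_0$ to $a_1$, then both components of the holonomy agree, i.e. $\holE_{a_0}=\holE_{a_1}$ and $\holC_{a_0}=\holC_{a_1}$. The central observation is that thinness forces the curvature terms appearing in Lemma \ref{lem:curvature:global} to vanish identically, after which the global curvature identity immediately yields the $s$-independence of the parallel transport.

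First I would unwind the definition of thinness. Since $\wedge^2\sig\colon \wedge^2 TI^2\to\wedge^2 A$ is required to be trivial and $\wedge^2 TI^2$ is generated by $\partial_t\wedge\partial_s$, one gets $\wedge^2\sig(\partial_t\wedge\partial_s)=a\wedge b=0$ as a section of $\wedge^2 A$ along $\gamma$; in other words $a(t,s)\wedge b(t,s)=0$ in $\wedge^2 A_{\gamma_t^s}$ for all $(t,s)\in I^2$. Next, since the curvature $\omegaE$ is an alternating $2$-form, i.e. a section of $\wedge^2 A^*\otimes\Hom(E,E)$, the evaluation $\omegaE(a,b)_{\gamma_t^s}$ factors through $a\wedge b\in\wedge^2 A_{\gamma_t^s}$ and therefore vanishes at every $(t,s)$. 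Plugging this into the identity of Lemma \ref{lem:curvature:global}, the integrand on the right-hand side is identically zero, so $\frac{d}{ds}\holE_{a^s_{1,0}}(x)=0$ for all $x$. Hence $\holE_{a^s_{1,0}}$ is independent of $s$, and evaluating at $s=0$ and $s=1$ gives $\holE_{a_0}=\holE_{a_1}$. The identical argument applied to the connection $\nablaC$ with curvature $\omegaC$ yields $\holC_{a_0}=\holC_{a_1}$, so that $\hol(a_0)=\hol(a_1)$.

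The only point requiring care — and what I expect to be the main (minor) obstacle — is to check that the two holonomies are genuinely comparable as maps between the same fibers, so that the equality above is meaningful rather than a comparison of maps with different domains. This is guaranteed by the boundary condition $b|_{\{t=0,1\}}=0$ built into the definition of an $A$-homotopy: it forces $\rho_A(b)=0$ along $t=0$ and $t=1$, so the source point $\gamma_0^s$ and the target point $\gamma_1^s$ are constant in $s$. Consequently all the maps $\holE_{a^s_{1,0}}$ share the common domain $E_{\gamma_0^0}$ and codomain $E_{\gamma_1^0}$, and likewise for the $C$-component. I would also remark that reparametrization invariance of the holonomy is subsumed by this argument, since reparametrizations are thin homotopies; no separate verification is needed.
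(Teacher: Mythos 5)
Your proposal is correct and follows the same route as the paper: the paper's proof likewise observes that thinness kills the curvature term $\omegaE(a,b)$ in the identity of Lemma \ref{lem:curvature:global}, so that $\holE_{a^s_{1,0}}$ (and similarly $\holC_{a^s_{1,0}}$) is independent of $s$. Your additional remarks — that the vanishing factors through $a\wedge b=0$, and that the boundary condition $b|_{\{t=0,1\}}=0$ fixes the endpoints so the holonomies are comparable — are correct elaborations of details the paper leaves implicit.
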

\begin{proof}
	Given a thin homotopy $adt+bds:TI^2\to A$, the term $\omegaE(\alpha,\beta)$ in the right hand term of the equation  \eqref{eq:curvature:global} vanishes. It follows that $\holE_{a^s_{1,0}}$ is independent of $s$, and similarly for $\holC_{a^s_{1,0}}$. We conclude that $\hol(a)=(\holE_a,\holC_a)$ only depends on the thin homotopy class of $a$.
\end{proof}

In order to keep simple notations, in the sequel we denote $a_0$ and $a_1$ the paths $a^{s=0}_{1,0}$ and $a^{s=1}_{1,0}$ (notice that the lower indice then refers to the $s$ variable). This way, $\sig=adt+bds$ is a $A$-homotopy $\sigma:a_0\overset{}{\Rightarrow}a_1$.

\begin{lemma}\label{lem:hom-fonctorial1}
Given an $A$-homotopy $\sig:a_0\overset{}{\Rightarrow}a_1$, $\hol(\sig)$ defines a chain homotopy $\hol(a_0)\Rightarrow \hol(a_1)$  in $2\text{-}\Gau(C\xrightarrow{\d}E)$.
\end{lemma}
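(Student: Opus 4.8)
The plan is to verify directly the two defining relations of a chain homotopy in $\tGau(\E)$. Writing $a_0=a^{s=0}_{1,0}$ and $a_1=a^{s=1}_{1,0}$ as in the preceding notation, I must show that $\hol(\sig)\in\Hom(E_x,C_y)$ satisfies
$$\holC_{a_1}-\holC_{a_0}=\hol(\sig)\circ\partial \qquad\text{and}\qquad \holE_{a_1}-\holE_{a_0}=\partial\circ\hol(\sig).$$
The whole argument rests on Lemma \ref{lem:curvature:global}, which holds for an arbitrary $A$-connection and will be applied once to $\nablaE$ and once to $\nablaC$.

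First I would treat the relation for $E$. By the fundamental theorem of calculus,
$$\holE_{a_1}-\holE_{a_0}=\int_0^1\frac{d}{ds}\holE_{a_{1,0}^s}\,ds,$$
and Lemma \ref{lem:curvature:global} applied to $\nablaE$ rewrites the integrand as the $t$-integral of $\holE_{a^s_{1,t}}\circ\omegaE(a,b)_{\gamma_t^s}\circ\holE_{a^s_{t,0}}$. Substituting the compatibility relation $\omegaE=\partial\circ\omega$ from Definition \ref{def:ruh}, and then invoking the chain map identity $\holE_{a^s_{1,t}}\circ\partial=\partial\circ\holC_{a^s_{1,t}}$ provided by Lemma \ref{lem:chain:hom}, I can factor $\partial$ out on the left of the resulting double integral, which leaves exactly $\partial\circ\hol(\sig)$.

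The relation for $C$ is entirely symmetric. Applying Lemma \ref{lem:curvature:global} to $\nablaC$ gives
$$\holC_{a_1}-\holC_{a_0}=\int_0^1\!\!\!\int_0^1\holC_{a^s_{1,t}}\circ\omegaC(a,b)_{\gamma_t^s}\circ\holC_{a^s_{t,0}}\,dt\,ds.$$
This time I would substitute $\omegaC=\omega\circ\partial$, again from Definition \ref{def:ruh}, and use the chain map identity in the form $\partial\circ\holC_{a^s_{t,0}}=\holE_{a^s_{t,0}}\circ\partial$, so that $\partial$ now factors out on the right, producing $\hol(\sig)\circ\partial$.

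I do not expect a genuine obstacle here: the lemma is essentially a bookkeeping exercise combining the global curvature formula of Lemma \ref{lem:curvature:global} with the two compatibility conditions $\partial\circ\omega=\omegaE$ and $\omega\circ\partial=\omegaC$ built into a representation up to homotopy. The only point demanding care is to keep the holonomy factors $\holE_{a^s_{1,t}}$, $\holE_{a^s_{t,0}}$ (respectively $\holC_{a^s_{1,t}}$, $\holC_{a^s_{t,0}}$) and the curvature term $\omega(a,b)_{\gamma_t^s}$ correctly ordered, and to recognise that $\partial$ is threaded through the chain map identity on the \emph{left} in the $E$-computation but on the \emph{right} in the $C$-computation.
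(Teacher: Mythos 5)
Your proposal is correct and follows essentially the same route as the paper: both proofs combine Lemma \ref{lem:chain:hom}, the compatibility conditions $\partial\circ\omega=\omegaE$, $\omega\circ\partial=\omegaC$, and the global curvature formula of Lemma \ref{lem:curvature:global}, then integrate over $s$. The only cosmetic difference is the direction of the computation (the paper starts from $\hol(\sig)$ pre/post-composed with $\partial$ and identifies it with $\tfrac{d}{ds}\holC_{a^s_{1,0}}$, resp.\ $\tfrac{d}{ds}\holE_{a^s_{1,0}}$, whereas you start from the $s$-derivative and work toward $\hol(\sig)$), which is immaterial.
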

\begin{proof}
We have to prove that:
\begin{align*}
\hol(\sig)\circ\partial&=\holC_{a_1}-\holC_{a_0}, \\
\partial\circ \hol(\sig)&=\holE_{a_1}-\holE_{a_0}.
\end{align*}
By using successively the lemma \ref{lem:chain:hom}, the equation \eqref{eq:omegas:relation} and then the lemma \ref{lem:curvature:global}, we obtain:
\begin{align*}
\int_0^1 \holC_{a^s_{1,t}}\circ\ \omega(a,b)_{\gamma_{t}^{s}} \circ \holE_{a^s_{t,0}}dt \circ \d 
        &=\frac{d}{ds} \holC_{a^s_{1,0}},\\
\d\circ\int_0^1 \holC_{a^s_{1,t}}\circ\ \omega(a,b)_{\gamma_{t}^{s}} \circ \holE_{a^s_{t,0}}dt
        &=\frac{d}{ds} \holE_{a^s_{1,0}}.
\end{align*}
The result then follows by integrating along the $s$ variable. Note that this lemma  was proved for $A=TM$ in \cite[Prop.\, 3.13]{AriasCrainic}, where it is stated in a slightly different way.
\end{proof}

\begin{lemma}
The holonomy commutes with the vertical inverse and composition of $2$-morphisms.          
\end{lemma}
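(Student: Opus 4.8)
The plan is to verify the two compatibilities directly from the integral formula \eqref{eq:holonomy-homotopy}, the only inputs being the bilinearity (and antisymmetry) of $\omega$ and a change of variables in the $s$-integral. Recall that in $\tGau(\E)$ vertical composition of $2$-morphisms is addition, $\psi\circV\phi=\psi+\phi$, and the vertical inverse is negation, so the two identities to establish amount to $\hol(\sig'\circV\sig)=\hol(\sig)+\hol(\sig')$ and $\hol(\sig^{-\mathbf{1}_{\mathbf V}})=-\hol(\sig)$. Throughout I would use the $3$-homotopy invariance of $\hol$ (Lemma \ref{lem:3homotopyindependant}) to assume the $A$-homotopies in play are flat at their boundaries, so that the vertical concatenation and vertical inverse of Section \ref{sec:Weinsteintwogroupoid} are defined without first reparametrizing.

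For vertical composition, let $\sig:a_0\Rightarrow a_1$ and $\sig':a_1\Rightarrow a_2$ and set $\Sigma=\sig'\circV\sig=A\,dt+B\,ds$. I would split the defining integral of $\hol(\Sigma)$ at $s=1/2$. On $[0,1/2]$ the concatenation formula gives $A(t,s)=a(t,2s)$ and $B(t,s)=2b(t,2s)$, so the slice holonomies become $\holC_{A^s_{1,t}}=\holC_{a^{2s}_{1,t}}$ and $\holE_{A^s_{t,0}}=\holE_{a^{2s}_{t,0}}$ over base points $\gamma^{2s}_t$, while bilinearity of $\omega$ turns $\omega(a,2b)$ into $2\,\omega(a,b)$. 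The substitution $u=2s$ then absorbs the factor $2$ against the Jacobian $ds=\tfrac12\,du$, and the first half of the integral reproduces exactly $\hol(\sig)$; the half over $[1/2,1]$ reproduces $\hol(\sig')$ identically. Summing yields $\hol(\sig'\circV\sig)=\hol(\sig)+\hol(\sig')=\hol(\sig')\circV\hol(\sig)$, the last equality holding since addition in $\Hom(E,C)$ is commutative.

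For the vertical inverse, $\sig^{-\mathbf{1}_{\mathbf V}}$ reverses the $s$-direction: its $ds$-component is $-b(t,1-s)$ and its slice holonomies are those of $a$ at parameter $1-s$. By bilinearity the curvature contribution acquires an overall minus sign from the $-b$ argument, and the change of variables $u=1-s$ converts the $s$-integral of $\hol(\sig^{-\mathbf{1}_{\mathbf V}})$ into $-\hol(\sig)$, which is precisely the vertical inverse of $\hol(\sig)$ in $\tGau(\E)$. The computations are essentially bookkeeping, and the only point requiring care—the main, minor obstacle—is matching the reparametrized slice holonomies $\holC_{a^{2s}_{1,t}}$, $\holE_{a^{2s}_{t,0}}$ (resp. the $1-s$ versions) and their base points with those in the defining formula for $\hol(\sig)$ after the substitution, and checking that the scaling factors produced by bilinearity of $\omega$ cancel exactly against the Jacobians. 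Compatibility with vertical units is then immediate, since the unit homotopy has $b=0$ and hence $\hol(\mathbf{1}^V_{adt})=0$.
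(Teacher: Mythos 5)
Your proof is correct and follows essentially the same route as the paper: the vertical composition identity is obtained by splitting the defining integral at $s=1/2$ and using additivity (the paper leaves the factor-of-$2$/Jacobian cancellation and the reparametrization issue implicit, which you spell out), and the vertical inverse is handled by the sign flip and the substitution $s\mapsto 1-s$, which the paper simply declares obvious.
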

\begin{proof}
Consider two $A$-homotopies $a_0 \stackrel{\sig}{\Rightarrow} a_1\stackrel{\sig'}{\Rightarrow} a_2$. Recall that the vertical composition is given by concatenation along the $s$ variable.
It follows from the additivity of the integral that:
\begin{align*} \hol\bigl(\sig \circV \sig'\bigr)=&\int_0^1 \!\!\!  \int_0^1 \hol_{a_{1,t}^s}^C\circ\  \omega(a,b)_{\gamma_{t}^{s}}\circ \holE_{a_{t,0}^s} dt ds\\
                         &+\int_0^1 \!\!\! \int_0^1 \hol_{{a'}_{1,t}^s}^C\circ\  \omega(a',b')_{{\gamma'}_{t}^{s}}\circ \holE_{{a'}_{t,0}^s} dt ds
                         =\hol(\sig)\circV\hol(\sig').
\end{align*}
The fact that the holonomy commutes with the vertical inverse is obvious.
\end{proof}

\begin{lemma}
The holonomy commutes with the horizontal inverse and composition of $2$-morphisms.
\end{lemma}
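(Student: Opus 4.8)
The plan is to treat the two assertions separately, in each case reducing the computation to a manipulation of the double integral \eqref{eq:holonomy-homotopy} that exploits the two first-order relations established inside the proof of Lemma \ref{lem:hom-fonctorial1},
\[
\Bigl(\int_0^1\holC_{a^s_{1,t}}\circ\omega(a,b)_{\gamma^s_t}\circ\holE_{a^s_{t,0}}\,dt\Bigr)\circ\d=\frac{d}{ds}\holC_{a^s_{1,0}},\qquad \d\circ\Bigl(\int_0^1\holC_{a^s_{1,t}}\circ\omega(a,b)_{\gamma^s_t}\circ\holE_{a^s_{t,0}}\,dt\Bigr)=\frac{d}{ds}\holE_{a^s_{1,0}}.
\]
Throughout I abbreviate by $F(s)$ the inner $t$-integral occurring here, so that $\hol(\sig)=\int_0^1 F(s)\,ds$, and I set $P(s)=\holC_{a^s_{1,0}}$, $Q(s)=\holE_{a^s_{1,0}}$; the two relations then read $\dot P=F\circ\d$ and $\dot Q=\d\circ F$. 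For the second homotopy $\sig'$ I write $G$, $P'(s)=\holC_{{a'}^s_{1,0}}$ for the analogous objects, so that $\tfrac{d}{ds}P'=G\circ\d$.

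For the horizontal composition I would first split the defining integral of $\hol(\sig'\circH\sig)$ into its two halves $t\in[0,\tfrac12]$ and $t\in[\tfrac12,1]$ and substitute $u=2t$, resp. $u=2t-1$. Reparametrization invariance of the holonomy, together with the multiplicativity of parallel transport used to factor $\holC_{a^s_{1,t}}$ and $\holE_{a^s_{t,0}}$ across the concatenation point, turns the first half into $\int_0^1\holC_{{a'}^s_{1,0}}\circ F(s)\,ds$ and the second into $\int_0^1 G(s)\circ\holE_{a^s_{1,0}}\,ds$. Comparing with the gauge formula $\hol(\sig')\circ\holE_{a_0}+\holC_{a'_1}\circ\hol(\sig)$, the difference equals
\[
\int_0^1 G(s)\circ\bigl(Q(0)-Q(s)\bigr)\,ds+\int_0^1\bigl(P'(1)-P'(s)\bigr)\circ F(s)\,ds.
\]
Writing $Q(0)-Q(s)=-\int_0^s\d\circ F$ and $P'(1)-P'(s)=\int_s^1 G\circ\d$ and applying Fubini, both iterated integrals reduce to $\int_{0\le\sigma\le s\le1}G(s)\circ\d\circ F(\sigma)$ over the same triangle and cancel, giving $\hol(\sig'\circH\sig)=\hol(\sig')\circH\hol(\sig)$.

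For the horizontal inverse I would compute $\hol(\sig^{-1_H})$ directly. The reflection $t\mapsto1-t$ and the sign in $\sig^{-1_H}(t,s)=-a(1-t,s)\,dt+b(1-t,s)\,ds$ replace each holonomy factor by the inverse of a complementary transport and produce an overall minus sign from the bilinearity of $\omega$; after the change of variables $r=1-t$ and the splittings $\bigl(\holC_{a^s_{r,0}}\bigr)^{-1}=P(s)^{-1}\circ\holC_{a^s_{1,r}}$ and $\bigl(\holE_{a^s_{1,r}}\bigr)^{-1}=\holE_{a^s_{r,0}}\circ Q(s)^{-1}$, this collapses to
\[
\hol(\sig^{-1_H})=-\int_0^1 P(s)^{-1}\circ F(s)\circ Q(s)^{-1}\,ds.
\]
It then remains to identify this with the gauge expression $\hol(\sig)^{-1_H}=-P(1)^{-1}\circ\bigl(\int_0^1 F\bigr)\circ Q(0)^{-1}$.

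The step I expect to be the main obstacle is exactly this last identification, since the factors $P(s)^{-1}$ and $Q(s)^{-1}$ vary with $s$ while the target only involves the endpoints $s=0,1$. I would resolve it by introducing $R(s)=\int_0^s F$, so that integrating $\dot P=F\circ\d$ and $\dot Q=\d\circ F$ gives $P(s)=P(0)+R(s)\circ\d$ and $Q(s)=Q(0)+\d\circ R(s)$, and by invoking the chain-map identity $\d\circ P(s)^{-1}=Q(s)^{-1}\circ\d$ coming from Lemma \ref{lem:chain:hom}. A short computation then yields the telescoping relation
\[
\frac{d}{ds}\bigl(P(s)^{-1}\circ R(s)\bigr)=P(s)^{-1}\circ\dot R(s)\circ Q(s)^{-1}\circ Q(0),
\]
using that $\d\circ P(s)^{-1}\circ R(s)=Q(s)^{-1}\circ\d\circ R(s)=\id-Q(s)^{-1}\circ Q(0)$. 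Integrating from $0$ to $1$ and using $R(0)=0$ gives $\int_0^1 P(s)^{-1}\circ F(s)\circ Q(s)^{-1}\,ds=P(1)^{-1}\circ R(1)\circ Q(0)^{-1}$, which is precisely the required identity and completes the verification that $\hol$ commutes with horizontal inverses.
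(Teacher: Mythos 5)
Your argument for the horizontal composition is correct and is essentially the computation the paper performs: both start from the additive splitting of the double integral into the two halves $\int P'(s)\circ F(s)\,ds$ and $\int G(s)\circ Q(s)\,ds$, and both use the two first-order identities from Lemma \ref{lem:hom-fonctorial1} to move the $s$-dependent outer holonomies to the endpoints. The paper organizes this by substituting the integral expression for $\holC_{a'^s_{1,0}}$ into the first term and recovering the second term with a minus sign, whereas you compute the discrepancy with the target expression and cancel the two iterated integrals over the triangle $\{0\le\sigma\le s\le 1\}$ by Fubini; these are the same manipulation read in opposite directions. (The paper's final expression uses $\holC_{a'_0}$ and $\holE_{a_1}$ where yours uses $\holC_{a'_1}$ and $\holE_{a_0}$, but the two agree because the discrepancy $\hol(\sig')\circ\d\circ\hol(\sig)-\hol(\sig')\circ\d\circ\hol(\sig)$ vanishes, so there is no conflict.) For the horizontal inverse the paper gives no argument at all --- it is explicitly ``left to the reader'' --- so your computation is a genuine addition: the reduction of $\hol(\sig^{-1_{\mathbf H}})$ to $-\int_0^1 P(s)^{-1}\circ F(s)\circ Q(s)^{-1}\,ds$ via the reflection $t\mapsto 1-t$ and the factorizations of the partial holonomies is correct, and the telescoping identity $\frac{d}{ds}\bigl(P(s)^{-1}\circ R(s)\bigr)=P(s)^{-1}\circ\dot R(s)\circ Q(s)^{-1}\circ Q(0)$, which rests on $\d\circ R(s)=Q(s)-Q(0)$ and the chain-map relation $\d\circ P(s)^{-1}=Q(s)^{-1}\circ\d$, checks out and yields exactly $-P(1)^{-1}\circ\hol(\sig)\circ Q(0)^{-1}=\hol(\sig)^{-1_{\mathbf H}}$. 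In short, your proof is complete and correct, and it supplies the half of the lemma the paper omits.
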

\begin{proof}
Given $a_0 \stackrel{\sig}{\Rightarrow} a_1$ and $a'_0\stackrel{\sig'}{\Rightarrow} a'_1$ (where $a_0$ and $a'_0$ (resp. $a_1$ and $a'_1$) are composable paths) the horizontal composition is obtained by concatenation along the $t$ coordinate.

Using the additivity of the integral, we see that:
\begin{multline*}
\hol\bigl(h\circH h'\bigr)=\underbrace{\int_0^1\!\!\! \int_0^1 \holC_{a'^s_{1,0}}\circ\hol_{a_{1,t}^s}^C\circ\  \omega(a,b)_{\gamma_{t}^{s}}\circ \holE_{a_{t,0}^s} dt ds}_{\displaystyle{=:A}}\\
                 +\underbrace{\int_0^1\!\!\! \int_0^1 \holC_{{a'}_{1,t'}^{s'}}\circ\  \omega(a',b')_{\gamma_{t'}^{s'}}\circ \holE_{{a'}_{t',0}^{s'}}\circ \holE_{a_{1,0}^{s'}} dt' ds'.}_{\displaystyle{=:B}}
\end{multline*}
In the term $A$, we substitute the following relation:
\begin{align*}
\holC_{a'^s_{1,0}}
&=\holC_{{a'}^{s=0}_{1,0}}+\int_0^s\!\!\!\int_0^1 \holC_{{a'}^{s'}_{1,t'}}\circ\ \omegaC(a',b')\circ\holC_{{a'}_{t',0}^{s'}}dt'ds'\\
&=\holC_{{a'}^{s=0}_{1,0}}+\int_0^s\!\!\!\int_0^1 \holC_{{a'}^{s'}_{1,t'}}\circ\ \omega(a',b')\circ\holE_{{a'}_{t',0}^{s'}}dt'ds'\circ \d.
\end{align*}
We obtain:
\begin{align*}
A=&\int_0^1\!\!\!\int_0^1 \holC_{{a'}^{s=0}_{1,0}} \circ \holC_{a_{1,t}^s}\circ\  \omega(a,b)_{\gamma_{t}^{s}}\circ \holE_{{a}_{t,0}^s} dt ds\\
  &\quad+\int_0^1\!\!\!\int_0^1 \Bigl(\, \int_0^s\!\!\!\int_0^1 \holC_{{a'}^{s'}_{1,t'}}\circ\ \omega(a',b')\circ\holE_{{a'}_{t',0}^{s'}}dt'ds'\Bigr)\circ \d \circ\holC_{a_{1,t}^s}\circ\  \omega(a,b)_{\gamma_{t}^{s}}\circ \holE_{{a}_{t,0}^s} dt ds\\
¨
 =& \holC_{{a'}^{s=0}_{1,0}}\circ\int_0^1\!\!\! \int_0^1 \holC_{a_{1,t}^s}\circ\  \omega(a,b)_{\gamma_{t}^{s}}\circ \holE_{{a}_{t,0}^s} dt ds\\
  &\quad+\int_0^1\!\!\! \int_0^1 \holC_{{a'}^{s'}_{1,t'}}\circ\ \omega(a',b')_{\gamma_{t'}^{s'}}\circ\holE_{{a'}_{t',0}^{s'}}\circ\ \Bigl(\,\int_{s'}^1\!\!\!\int_0^1\holE_{a_{1,t}^s}\circ\ \omegaE(a,b)_{\gamma_{t}^{s}}\circ \holE_{{a}_{t,0}^s}dtds\Bigr) dt' ds'\\
 =&\holC_{{a'}^{s=0}_{1,0}}\circ\int_0^1\!\!\! \int_0^1 \holC_{a_{1,t}^s}\circ\  \omega(a,b)_{\gamma_{t}^{s}}\circ \holE_{{a}_{t,0}^s} dt ds\\
  &\quad+\Bigl(\,\int_0^1\!\!\! \int_0^1 \holC_{{a'}^{s'}_{1,t'}}\circ\ \omega(a',b')_{\gamma_{t'}^{s'}}\circ\holE_{{a'}_{t',0}^{s'}} dt' ds'\Bigl)\circ \holE_{a^{s=1}_{1,0}}\\
  &\quad\quad-\int_0^1\!\!\! \int_0^1 \holC_{{a'}^{s'}_{1,t'}}\circ\ \omega(a',b')_{\gamma_{t'}^{s'}}\circ\holE_{{a'}_{t',0}^{s'}}\circ\holE_{a^{s=s'}_{1,0}} dt' ds'\\
 =&\holC(a'_{0})\circ\hol(\sig)+\hol(\sig')\circ\hol(a_{1})- B
\end{align*}
We conclude that: $\hol(\sig' \circH \sig)=\holC(a'_{0})\circ\hol(\sig)+\hol(\sig')\circ\holE(a_{1})=\hol(\sig' )\circH \hol(\sig)$. We leave it to the reader to check that $\hol$ commutes with the horizontal inverses.
\end{proof}

\begin{lemma}\label{lem:3homotopyindependant}
$\hol(\sig)$ is independent of the $3$-homotopy class of $\sig$.      
\end{lemma}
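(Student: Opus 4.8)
The plan is to reduce $3$-homotopy invariance to an infinitesimal statement and then differentiate the defining integral. Given a $3$-homotopy $H = H_1\,dt + H_2\,ds + H_3\,du : TI^3 \to A$ connecting $\sigma_0 = (H_1\,dt + H_2\,ds)|_{\{u=0\}}$ to $\sigma_1 = (H_1\,dt + H_2\,ds)|_{\{u=1\}}$, write $\sigma_u := (H_1\,dt + H_2\,ds)|_{u}$ for the interpolating family of $A$-homotopies and set $F(u) := \hol(\sigma_u)$. It suffices to prove that $F'(u) = 0$ for every $u$, whence $F(0) = F(1)$. Conceptually this is a non-abelian Stokes statement: the triple $(\nablaE,\nablaC,\omega)$ defines a flat $2$-connection, the flatness being exactly the closedness $\nabla\omega = 0$ of \eqref{eq:ruh:omegaclosed}, and a $3$-homotopy is a deformation of the surface $\sigma$ relative to its edges, along which the surface holonomy of a flat $2$-connection is invariant.

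To carry this out in the direct style of the preceding lemmas, I would differentiate the defining integral \eqref{eq:holonomy-homotopy} under the integral sign. This produces three groups of terms, according to whether $\tfrac{d}{du}$ falls on the curvature factor $\omega(a,b)_{\gamma_t^s}$, on the left holonomy $\holC_{a^s_{1,t}}$, or on the right holonomy $\holE_{a^s_{t,0}}$. For the curvature factor, the Lie algebroid morphism equations satisfied by $H$ let one rewrite $\partial_u\bigl(\omega(a,b)_{\gamma_t^s}\bigr)$ in terms of $(\nabla\omega)(H_3,a,b)$ together with terms in which a covariant derivative of the $u$-component $H_3$ replaces $a$ or $b$. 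The genuine three-form contribution $(\nabla\omega)(H_3,a,b)$ vanishes identically by \eqref{eq:ruh:omegaclosed}; this is precisely where the hypothesis of a bona fide representation up to homotopy, rather than arbitrary connection data, is indispensable.

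For the two holonomy factors, I would invoke the global curvature equation of Lemma \ref{lem:curvature:global}, now applied in the $(t,u)$-plane of the family: the $u$-derivatives of $\holC_{a^s_{1,t}}$ and $\holE_{a^s_{t,0}}$ are expressed as $t$-integrals of the curvatures $\omegaC$ and $\omegaE$ inserted along the paths, which through $\partial\circ\omega=\omegaE$ and $\omega\circ\partial=\omegaC$ are in turn controlled by $\omega$ evaluated on pairs involving $H_3$. After an integration by parts in $t$ to align these contributions with the one coming from the curvature factor, the terms cancel in pairs. The boundary terms generated by the integration by parts, together with the cube faces $\{t=0\}$, $\{t=1\}$, $\{s=0\}$, $\{s=1\}$, drop out thanks to the defining boundary conditions $H_3\,du|_{\{t=0,1\}} = 0\,du$ and $H_3\,du|_{\{s=0,1\}} = 0\,du$ of a $3$-homotopy.

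The main obstacle is the bookkeeping in the step combining the three groups of terms: the cancellation is not term-by-term obvious and demands a careful matching of the curvature integrals produced by Lemma \ref{lem:curvature:global} against the $u$-derivative of the curvature factor, using all the compatibility relations \eqref{def:ruh1}--\eqref{eq:ruh:omegaclosed}, and in particular the flatness $\nabla\omega = 0$. Once the algebra is organized so that $\nabla\omega$ is the only surviving obstruction, its vanishing forces $F'(u)=0$, and $\hol(\sigma)$ depends only on the $3$-homotopy class of $\sigma$.
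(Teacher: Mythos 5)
Your proposal is correct and follows essentially the same route as the paper: both reduce the statement to $\tfrac{d}{du}\hol(\sigma_u)=0$, invoke the flatness $\nabla\omega=0$ of \eqref{eq:ruh:omegaclosed} together with the global curvature formula of Lemma \ref{lem:curvature:global}, and dispose of the remaining terms via the boundary conditions $H_3|_{\{t=0,1\}}=H_3|_{\{s=0,1\}}=0$. The only (cosmetic) difference is bookkeeping: the paper first packages the full $u$-derivative of the conjugated integrand as $\holC\circ\nabla_u\,\omega(a,b)\circ\holE$, trades $\nabla_u\,\omega(a,b)$ for $-\nabla_a\,\omega(b,c)-\nabla_b\,\omega(c,a)$ using $\nabla\omega=0$, and then shows each of the two resulting integrals is separately an exact $t$- or $s$-derivative integrating to vanishing boundary terms, rather than cancelling the three Leibniz contributions pairwise as you suggest.
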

\begin{proof}
Given a homotopy $H=adt+bds+cdu:TI^3\to A$, we denote $\gamma^{s,u}_t$ rather than $\gamma(t,s,u)$ the base path. Then $h_0$ and $h_u$ will denote respectively the $A$-homotopie $h_0:=(adt+bds)|_{I^2\times\{u\}}$ so that $H$ is a homotopy $H:h_0\Rightarrow h_1$. Also $a_{t',t}^{s,u}$ will refer to the path $a|_{[t,t']\times\{s\}\times\{u\}}dt$, while:
\begin{align*}\holE_{a^{s,u}_{t',t}}&:E_{\gamma^{s,u}_t}\to E_{\gamma^{s,u}_{t'}}, \\
                \holC_{a^{s,u}_{t,0}}&:C_{\gamma^{s,u}_t}\to C_{\gamma^{s,u}_{t'}}
\end{align*}
will denote the corresponding holonomies. We will show that $\hol(h_u)$ is independant of $u$, which is  a consequence of \eqref{eq:ruh:omegaclosed}. For this, we compute:
\begin{align*}
\frac{d}{du} \hol(h_u)
&=\int_0^1\!\!\!\int_0^1 \frac{d}{du}\holC_{a^{s,u}_{1,t}}\circ\ \omega(a,b)_{\gamma^{s,u}_t}\circ \holE_{a^{s,u}_{t,0}}dtds\\
&=\int_0^1\!\!\!\int_0^1 \holC_{a^{s,u}_{1,t}}\circ\,\nabla_u\, \omega(a,b)_{\gamma^{s,u}_t}\circ \holE_{a^{s,u}_{t,0}}dtds\\
&=-\underbrace{\int_0^1\!\!\!\int_0^1 \holC_{a^{s,u}_{1,t}}\circ\,\nabla_a\, \omega(b,c)_{\gamma^{s,u}_t}\circ \holE_{a^{s,u}_{t,0}}dtds}_{=:A}\\
&\quad\quad\quad\quad\quad\quad-\underbrace{\int_0^1\!\!\!\int_0^1 \holC_{a^{s,u}_{1,t}}\circ\,\nabla_b\,\omega(c,a)_{\gamma^{s,u}_t}\circ \holE_{a^{s,u}_{t,0}}dtds.}_{=:B}
\end{align*} 
On the right hand side, we have:
\begin{align*}
B&=\int_0^1\!\!\!\int_0^1 \holC_{a^{s,u}_{1,t}}\circ\,\nabla_b\, \omega(c,a)_{\gamma^{s,u}_t}\circ \holE_{a^{s,u}_{t,0}}dtds\\
 &=\int_0^1\!\!\!\int_0^1 \frac{d}{ds}\holC_{a^{s,u}_{1,t}}\circ\, \omega(c,a)_{\gamma^{s,u}_t}\circ \holE_{a^{s,u}_{t,0}}dtds\\
 &=\int_0^1\Bigl(\holC_{a^{1,u}_{1,t}}\circ\, \omega(c,a)_{\gamma^{1,u}_t}\circ \holE_{a^{1,u}_{t,0}}-\holC_{a^{0,u}_{1,t}}\circ\,\omega(c,a)_{\gamma^{0,u}_t}\circ \holE_{a^{0,u}_{t,0}}\Bigr)dt=0.
\end{align*}Also:
\begin{align*}
A&=\int_0^1\!\!\!\int_0^1 \holC_{a^{s,u}_{1,t}}\circ \holC_{b_{s,1}^{u,t}} \circ\,\Bigl( \holC_{b_{1,s}^{u,t}}\circ\,\nabla_a\,\omega(b,c)_{\gamma^{s,u}_t} \circ\holE_{b_{s,0}^{u,t}}\Bigr)\circ\holE_{b_{0,s}^{u,t}}\circ\holE_{a^{s,u}_{t,0}}dtds\\
 &=\int_0^1\!\!\!\int_0^1 \holC_{a^{s,u}_{1,t}}\circ \holC_{b_{s,1}^{u,t}} \circ\,\Bigl(\frac{d}{dt} \holC_{b_{1,s}^{u,t}}\circ\,\omega(b,c)_{\gamma^{s,u}_t} \circ\holE_{b_{s,0}^{u,t}}\Bigr)\circ\holE_{b_{0,s}^{u,t}}\circ \holE_{a^{s,u}_{t,0}}dtds\\
 &=\int_0^1\!\!\!\int_0^1 \frac{d}{dt} \holC_{a^{s,u}_{1,t}}\circ\ \omega(b,c)_{\gamma^{s,u}_t} \circ \holE_{a^{s,u}_{t,0}}dtds\\
 &=\int_0^1\Bigl(\omega(b,c)_{\gamma^{s,u}_{1}} \circ \holE_{a^{s,u}_{1,0}}-\holC_{a^{s,u}_{1,0}}\circ\, \omega(b,c)_{\gamma^{s,u}_{0}}  \Bigl)ds=0,
\end{align*}
which completes the proof.
\end{proof}

\section{A geometric description of the semi-direct product}{\label{sec:geometric-description}}
It is possible to illustrate geometrically the construction of the transformation 2-groupoid $\mathcal{P}_2(A)\ltimes \E$ by a series of explicit diagrams. Although essentially informal, these diagrams usually offer guidance in the understanding of the algebraic construction.

In order to explain this, we shall use systematically the following notations: given a $1$-morphism $(c,a,e)\in \mathcal{P}_1(A)\ltimes \E$, we denote by $e'$ and $\aK$ the following elements: $e':=\holE_a(e)\in E$ and  $\aK:=(c,e')\in \G(\K)=E\oplus C$ (and similarly when indices are involved).

The geometric illustration goes as follows: in a $1$-morphism $(c,a,e)$, the couple $(a,e) $ should be thought of as the horizontal lift $\tilde{a}$ of $a$ with source $e$. Hence $(a,e)$ has target $e'$. Then $(c,a,e)$ behaves like the composition of $\tilde{a}$ with $\aK$. In other words, one may think of $(c,a,e)$ as a 'formal concatenation'  $(c,a,e)= v\cdot \tilde{a}$. This can be pictured as follows:
\diagramh{\begin{equation}\label{diag:splipath}
\begin{gathered}
                   \SelectTips{cm}{}\xymatrix@R=10pt{    &  &                                        &    *+[r]{  e'+\d c }\\
                   E\ar[dd]_{\pE}   &   &                                                                        \\
                                   &    &   e   \ar@/^0.5pc/[r]|{\, \tilde{a}\, } \ar@/^0,5pc/@{-->}[ruu]^{(c,a,e)}  &   *+[r]{e':=\holE_a(e)} \ar@/^0.5pc/[uu]|{\quad\ \ \aK=(e',c)}\\
                   M               &    &             m   \ar@/^0.5pc/[r]|{\, a\, }  &      m' }
\end{gathered}
\end{equation}}
Using the above diagram as a guidance, we easily recover the source and target maps on the space of $1$-morphisms. One can also recover the composition of $1$-morphisms, by a concatenation process as follows:
\diagramh{$$\SelectTips{cm}{}\xymatrix@R=35pt{    
                                                 &                   &*+[r]{e_1'+\d c'}\\
                \quad\quad\quad\quad             & e_0'+\d c_0    \ar@/^0,5pc/[r]|{\,\tilde{a}_1\,} \ar@/^0,5pc/@{-->}[ru]^{(c_1,a_1,x_1)}& *+[r]{e_1'} \ar@/^0,5pc/[u]|{\aK_1}   \\
                e_0    \ar@/^0,5pc/[r]|{\,\tilde{a}_0\,}\ar@/^0,5pc/@{-->}[ru]^{(c_0,a_0,e_0)}
								                                  & e_0'           \ar@/^0,5pc/[u]|{\ \aK_0}\ar@/^0,5pc/@{..>}[r] 
																									                    & *+[r]{\holE_{a_1}(e'_0)} \ar@/^0,5pc/@{..>}[u]|{\holK_{a_1}(\aK_0)\ }\ar@/_2pc/@{..>}[uu]|{\quad\quad\quad\! \aK_1\cdot \holK_{a_1}(\aK_0)} \\
                m_0    \ar@/^0,5pc/[r]|{\,a_0\,} \ar@/_1,5pc/@{..>}[rr]|{\, a_1\cdot a_0\,} 
                                                  & m_0'           \ar@/^0,5pc/[r]|{\,a_1\,}& m''}$$}
Then the inversion of $1$-morphisms can be recovered in an easy manner.

In order to illustrate illustrate the space of $2$-morphisms, given a $2$-morphism $(c_0,\sig,e_0)\in P_2(A)\ltimes \E= C \ {}_{\pC}\!\!\times_{\tH} P_2(A)\ {}_{\sH}\!\!\times_{\pE} E$, we shall use the following notations:
\begin{itemize}
\item $a_0:=\sV(\sig)\in P_1(A)$ denotes the source of $\sigma$,
\item $v_0:=(e'_0,c_0)\in E\oplus C$, where $e'_0:=\holE_a(e_0)$,
\item $a_1:=\tV(\sig)\in P_1(A)$ denotes the target of $\sigma$,
\item $v_1:=(e'_1,c_1)\in E\oplus C$, where $e'_1:=\holE_a(e_1)$, and $e_1:=e_0$.
\end{itemize}
The idea lying behind $2$-morphisms is the following. First we lift $a_0, a_1$ into $D$-paths $\tilde{a}_0, \tilde{a}_1$, starting at a same point $e_0\in E$. Although $a_0$ and $a_1$ are homotopic $A$-paths,  because of the presence of curvature, their horizontal lifts $\tilde{a}_0, \tilde{a}_1$ are not homotopic in $D$. Since this is precisely what $\hol(\sig)$ measures, we obtain $(c_0,h,e_0)=:\tilde{\sig}$ as a formal homotopy between  $\aK_0\cdot a_0=(c_0, a_0, e_0)$ and $\aK_1\cdot a_1=(c_1,a_1,e_0)$ provided we set $c_1=c_0-\hol(\sig)_{e_0}$, as suggested by the following diagram:
\diagramh{
\begin{align}\label{diag:splithomm}
\begin{aligned}
&\SelectTips{cm}{}\xymatrix@R=15pt@C=15pt{ 
                                   &&    &&*+[r]{e_0'+\d c_0 }&                   \\
																	                                                    \\
                                   &&   & &      \\
																	 &&   & &e'_1\ar@/_0,5pc/[uuu]|{\ \aK_1}&                                                                       \\
                                   &e_0=e_1  \ar@/_0,5pc/[rrrr]|>>>>>>>>>{\,\tilde{a}_0\,}
																	           \ar@/_0,5pc/[rrru]|{\,\tilde{a}_1\,}
																	           \ar@/_1,5pc/@{-->}[rrruuuu]|{\quad\overset{\ }{\underset{\ }{(c_0,a_0,e_0)}}}="f"
																						 \ar@/^1,5pc/@{-->}[rrruuuu]|{\underset{\ }{(c_1,a_1,e_1)}\quad}="g"
																						 &&&& e_0'  \ar@/_0,5pc/[uuuul]|{\ \aK_0}
																						            \ar@/_0,5pc/@{..>}[ul]^{\hol(\sig)_{e_0}}       \ar@{=>}^-{\tilde{\sig}}"f";"g"-<-1pt,4pt>}\\
&\xymatrix{                    \\  & \ \  m_0   \ar@/_1,5pc/[rrr]|>>>>>>{\,\overset{\ }{a_0}\,}="d" 
                                                \ar@/^1,5pc/[rrr]|<<<<<<<<{\,a_1\,}="e"
																							&\ &\quad&     {m_0'}\quad\quad                    \ar@{=>}^-{\sig}"d";"e"-<-1pt,4pt>
																							                                    }
\end{aligned}\end{align}}

In this way we easily recover the source and target maps on the space of $2$-morphisms. Then the vertical composition of $2$-morphisms can be depicted by a concatenation process:
\diagramh{
\begin{align*}
\SelectTips{cm}{}\xymatrix@R=25pt@C=15pt{ 
                                              &          & & &     &{e_0'+\d c_0 }&                                      \\
																	            &          & & &                                                           \\
                                              &          & & &                                                           \\
																	            &          & & &e'_2   \ar@/_0,5pc/[uuur]|{\aK_2}&                      \\
                                   e_0=e_1=e_2 
																	              \ar@/_1pc/[rrrrrrd]|<<<<<<<<<<<<<<<<<<<<<{\,\tilde{a}_0\,}
																	              \ar@/_0,5pc/[rrrrr]|{\,\tilde{a}_1\,}
																	              \ar@/_0.5pc/[rrrru]|<<<<<<<<{\,\tilde{a}_2\,}
																								\ar@/^2,5pc/@{-->}[rrrrruuuu]|{\underset{\ }{(c_2,a_2,e_2)}}="u"
																								\ar@{-->}[rrrrruuuu]|{\underset{\ }{(c_1,a_1,e_1)}}="v"
																								\ar@/_2,5pc/@{-->}[rrrrruuuu]|{\overset{\ }{(c_0,a_0,e_0)}}="w"        \ar@{=>}_-{}"w";"v"-<5pt,4pt>
																								                                                                 \ar@{=>}_-{}"v";"u"-<5pt,4pt>
																					    &\quad\quad\quad& & &       & e_1' 
																							                  \ar@/_0,5pc/[uuuu]|{\ \aK_1}
																																\ar@/_0,5pc/@{..>}[ul]^{\hol(\sig)_{e_1}}         \\
															                & & & &     &               & e'_0   \ar@/_1pc/[uuuuul]|{\ \aK_0}\ar@/_0,5pc/@{..>}[ul]^{\hol(\sig)_{e_0}}     \\
																   m_0          \ar@/_2pc/[rrrrr]|>>>>>>>>>>>>>{\,\overset{\ }{a_0}\,}="c"
	                                              \ar[rrrrr]|{\,\overset{\ }{\underset{\ }{\,a_1\,}}}="d"
										                            \ar@/^2pc/[rrrrr]|<<<<<<<<<<<<<{\,a_2\,}="e"
	                                           &                & & &       & {m_0'}\ar@{=>}_-{\sig_2}"d";"e"-<-5pt,4pt>
						                                                  \ar@{=>}^-{\sig_1}"c";"d"-<-5pt,4pt>}
\end{align*}}
We leave it to the reader to figure out the horizontal composition.

\end{document}